\providecommand{\normll}[1]{{\left\lVert#1\right\rVert}_{1,2}} 
\providecommand{\norml}[1]{{\left\lVert#1\right\rVert}_{1}}           
\providecommand{\normn}[1]{{\left\lVert#1\right\rVert}_{*}}          
\providecommand{\norm}[1]{{\left\lVert#1\right\rVert}}		  
\providecommand{\normop}[1]{{\left\vert\kern-0.25ex\left\vert\kern-0.25ex\left\vert #1 
    \right\vert\kern-0.25ex\right\vert\kern-0.25ex\right\vert}}		  
\providecommand{\normu}[1]{{\left\lVert#1\right\rVert}_2}		  
\providecommand{\dnorm}[1]{{\left\lVert#1\right\rVert}^*}	  
\providecommand{\inner}[2]{\langle {#1} , {#2} \rangle}		   
\providecommand{\innerB}[2]{\langle {#1} , {#2} \rangle} 	              
\providecommand{\card}{{\rm card}}      
\newcommand{\ptx}[2]{P_{{T}_{#1}}(#2)}
\newcommand{\ptpx}[2]{P_{{{T}_{#1}}^{\bot}}(#2)}
\newcommand\Algphase[1]{%
\vspace*{-.7\baselineskip}\Statex\hspace*{\dimexpr-\algorithmicindent-2pt\relax}\rule{\textwidth}{0.4pt}%
\Statex\hspace*{-\algorithmicindent}\text{#1}%
\vspace*{-.7\baselineskip}\Statex\hspace*{\dimexpr-\algorithmicindent-2pt\relax}\rule{\textwidth}{0.4pt}%
}
\DeclareMathOperator*{\argmin}{\arg\!\min}
\DeclareMathOperator*{\argmax}{\arg\!\max}
\DeclareMathOperator*{\trace}{\mathrm{trace}}
\DeclareMathOperator*{\vect}{vec}
\DeclareMathOperator*{\vspan}{span}
\newtheorem{thm}{Theorem}
\newtheorem{assumption}{Assumption}
\newtheorem{condition}{Condition}
\journalname{Comput Optim Appl}
\begin{document}

\title{Decomposable Norm Minimization with Proximal-Gradient Homotopy Algorithm\thanks{This material is based upon work supported by the National Science Foundation under Grant No. ECCS-0847077, and in part by the Office of Naval Research under Grant No. N00014-12-1-1002.}}


\author{Reza Eghbali         \and
        Maryam Fazel 
}


\institute{Maryam Fazel \at
              Department of Electrical Engineering, University of Washington, Seattle, WA 98195, USA\\
              \email{mfazel@uw.edu}           
           \and
              Reza Eghbali \at
              Department of Electrical Engineering, University of Washington, Seattle, WA 98195, USA\\
               \email{eghbali@uw.edu}    
}

\date{Received: 14 January 2015}

\maketitle

\begin{abstract}
We study the convergence rate of the proximal-gradient homotopy algorithm applied to norm-regularized linear least squares problems, for a general class of norms. The homotopy algorithm reduces the regularization parameter in a series of steps, and uses a proximal-gradient algorithm to solve the problem at each step. Proximal-gradient algorithm has a linear rate of convergence given that the objective function is strongly convex, and the gradient of the smooth component of the objective function is Lipschitz continuous. In many applications, the objective function in this type of problem is not strongly convex, especially when the problem is high-dimensional  and regularizers are chosen that induce sparsity or low-dimensionality.  
We show that if the linear sampling matrix satisfies certain assumptions and the regularizing norm is decomposable, proximal-gradient homotopy algorithm converges with a \emph{linear rate} even though the objective function is not strongly convex. Our result generalizes results on the linear convergence of homotopy algorithm for $l_1$-regularized least squares problems. Numerical experiments are presented that support the theoretical convergence rate analysis.


\keywords{Proximal-Gradient \and Homotopy \and Decomposable norm}
\end{abstract}
\section{Introduction}
\label{intro}
In signal processing and statistical regression, problems arise in which the goal is to 
recover a structured model from a few, often noisy, linear measurements. Well studied examples 
include recovery of sparse vectors and low rank matrices. These problems can be 
formulated as non-convex optimization programs, which are computationally intractable in general.
One can relax these non-convex problems using appropriate convex penalty functions, for example $\ell_1$, $\ell_{1,2}$  and nuclear norms in sparse vector, group sparse and low rank matrix recovery problems. These relaxations perform very well in many practical applications. Following \cite{donoho2006compressed,candes2006near,candes2006stable}, there has been a flurry of publications that formalize the condition for recovery of sparse vectors, e.g., \cite{bunea2007sparsity,van2009conditions}, low rank matrices, e.g., \cite{recht2010guaranteed,candes2011tight,gross2011recovering} from linear measurements by solving the appropriate relaxed convex optimization problems. Alongside results for sparse vector and low 
rank matrix recovery several authors have proposed more general frameworks for 
structured model recovery problems with linear measurements \cite{candes2012simple,chandrasekaran2012convex,negahban2012unified}. In many problems of 
interest, to recover the model from linear noisy measurements, one can formulate the 
following optimization program:
\begin{align}\label{convex_program}
{\mbox{minimize}}&{  \quad \norm{x}}\\ \notag
\mbox{subject to}& \quad \normu{{A}x-b}^2 \leq \epsilon^2,
\end{align}
\noindent where $b \in \mathbb{R}^m$ is the measurements vector, $A \in \mathbb{R}^{m\times n}$ is 
the linear measurement matrix, $\epsilon^2$ is the noise energy and $\norm{\cdot}$ is a norm on $\mathbb{R}^n$ that promotes the desired structure in the solution. The regularized version of 
problem \eqref{convex_program} has the following form:
\begin{align}\label{regularized_program}
&{\operatorname{minimize}}{\quad \frac{1}{2} \normu{{A}x-b}^2
+\lambda \norm{x}},
\end{align}

\noindent where $\lambda>0$ is the regularization parameter.

There has been extensive work on algorithms for solving problem \eqref{convex_program} and \eqref{regularized_program} 
in special cases of $\ell_1$  and nuclear norms. First order methods have been the method of choice for large scale
problems, since each iteration is computationally cheap.  Of particular interest is the proximal-gradient method for minimization of composite functions, which are functions that can be written as sum of a differentiable convex function and a closed convex function. Proximal-gradient method can be utilized for solving the regularized problem \eqref{regularized_program}.



When the smooth component of the objective function has a Lipschitz continuous gradient, proximal-gradient algorithm has a convergence rate of $O(1/t)$, where $t$ is the iteration number. For the accelerated version of proximal-gradient algorithm, the convergence rate improves to $O(1/t^2)$. When the objective function is strongly convex as well, proximal-gradient has linear convergence, i.e. $O(\kappa^t)$ with $\kappa \in (0,1)$ \cite{nesterov2013gradient}. However, in instances of problem \eqref{regularized_program} that are of interest, the number of samples is less than the dimension of the space, hence the matrix $A$ has a non-zero null space which results in an objective function that is not strongly convex. Several algorithms that combine homotopy continuation over $\lambda$ with proximal-gradient steps have been proposed in the literature for problem \eqref{regularized_program} in the special cases of $\ell_1$ and nuclear norms \cite{hale2008fixed,wright2009sparse,wen2010fast,ma2011fixed,toh2010accelerated}. Xiao and Zhang \cite{xiao2013proximal} have studied an algorithm with homotopy with respect to $\lambda$ for solving $\ell_1 \text{ regularized least squares}$ problem. Formulating their algorithm based on Nesterov's proximal-gradient method, they have demonstrated that this algorithm has an overall linear rate 
of convergence whenever $A$ satisfies the restricted isometry 
property (RIP) and the final value of the regularizer parameter $\lambda$ is greater 
than a  problem-dependent
lower bound. 

\subsection{Our result}

We generalize the linear convergence rate analysis of the homotopy algorithm studied in \cite{xiao2013proximal} to problem \eqref{regularized_program} when the regularizing norm is decomposable, where decomposability is a condition introduced in \cite{candes2012simple}. In particular, $\ell_1$, $\ell_{1,2}$ and nuclear norms satisfy this condition. We derive properties for this class of norms that are used directly in the convergence analysis. These properties can independently be of interest. Among these properties is the sublinearity of the the function $K: \mathbb{R}^n \mapsto \{0,1,\ldots,n\}$, where $K$ is generalization of the notion of cardinality for decomposable norms. 


The linear convergence result holds under an assumption on the RIP constants of $A$, which in turn holds with high probability for several classes of random matrices when the number of measurements $m$ is large enough (orderwise the same as that required for recovery of the structured model). 


\subsection{Algorithms for structured model recovery}
There has been extensive work on algorithms for solving problems \eqref{convex_program} 
and \eqref{regularized_program} in the special cases of $\ell_1$ and nuclear norms. For a detailed review of first order methods we refer the reader to \cite{nesterov2013first} and references therein. In \cite{xiao2013proximal}, authors have reviewed sparse recovery and $\ell_1$ norm minimization algorithms that are related to the homotopy algorithm for $\ell_1$ norm. We discuss related algorithms mostly focusing on algorithms for other norms including nuclear norm here. 

Proximal-gradient method for $\ell_1$/nuclear norm minimization has a local linear convergence in a neighborhood of the optimal value \cite{hou2013linear,zhang2013linear,luo1992linear}. The proximal operator for nuclear norm is soft-thresholding operator on singular values. Several authors have proposed algorithms for low rank matrix recovery and matrix completion problem based on soft- or hard-thresholding operators; see, e.g.,  \cite{jain2010guaranteed,cai2010singular,mazumder2010spectral,ma2011fixed}. The singular value projection algorithm proposed by Jain et al. has a linear rate; however, to apply the hard-thresholding operator, one 
should know the rank of $x_0$. While the authors have introduced a heuristic for estimating the rank when it is not known a priori, their convergence results rely upon a known rank \cite{jain2010guaranteed}. SVP is the generalization of iterative hard thresholding algorithm (IHT) for sparse vector recovery. SVP and IHT belong to the family of greedy algorithms which do not solve a convex relaxation problem. Other greedy algorithms proposed for sparse recovery such as Compressive Sampling Matching Pursuit (CoSaMP) \cite{needell2009cosamp} and Fully Corrective Forward Greedy Selection (FCFGS) \cite{shalev2010trading} have also been generalized for recovery of general structured models including low-rank matrices and extended to more general loss functions \cite{nguyen2014linear,shalev2011large}.


For huge-scale problems with separable regularizing norm such as $\ell_1$ and $\ell_{1,2}$, coordinate descent methods can reduce the computational cost of each iteration significantly.  The convergence rate of randomized proximal coordinate descent method in expectation is orderwise the same as full proximal gradient descent; however, it can yield an improvement in terms of the dependence of convergence rate on $n$ \cite{nesterov2012efficiency,richtarik2014iteration,lu2013complexity}. To the best of our knowledge, linear convergence rate for any coordinate descent method applied to problem \eqref{convex_program} or \eqref{regularized_program} has not been shown in the literature.

Continuation over $\lambda$ for solving the regularized problem has been utilized in fixed point continuation algorithm (FPC) proposed by Ma et al. \cite{ma2011fixed} and accelerated proximal-gradient algorithm with line search (APGL) by Toh et al. \cite{toh2010accelerated}. FPC and APGL both solve a series of regularized problems where in each outer-iteration $\lambda$ is reduced by a factor less than one, the former uses soft-thresholding and the latter uses accelerated proximal-gradient for solving each regularized problem.

Agarwal et al. \cite{agarwal2011fast} have proposed algorithms for solving problems 
\eqref{convex_program} and \eqref{regularized_program} with an extra constraint in the 
form of $\norm{x} \leq \rho$. They have introduced the assumption of decomposability of the norm and given convergence analysis for norms that satisfy that assumption. They establish linear rate of convergence for their algorithms 
up to a neighborhood of the optimal solutions. However, their algorithm 
uses the bound $\rho$ which should be selected based on the norm of the 
true solution. In many problems this quantity is not known beforehand. Jin et al. \cite{jin2013new} have proposed an algorithm for $\ell_1 \text{ regularized least squares}$ that receives $\rho$ as a parameter and has linear rate of convergence. Their algorithm utilizes proximal gradient method but unlike homotopy algorithm reduces $\lambda$ at each step.

By using SDP formulation of nuclear norm, interior point methods can be utilized to solve problems \eqref{convex_program} and \eqref{regularized_program}. Interior point methods do not scale as well as first order methods for large scale problems (For example, for a general SDP solver when the dimension exceeds a few hundreds). However, Specialized SDP solvers for nuclear norm minimization can bring down the computational complexity of each iteration to $O(n^3)$ \cite{liu2009interior}.


\section{Preliminaries}
Let $A \in \mathbb{R}^{m \times n}$. We equip $\mathbb{R}^n$ by an inner product which is given by $\inner{x}{y} = x^T B y$ for some positive definite matrix $B$. We equip $\mathbb{R}^m$ with ordinary dot product $\inner{v}{u}= v^T u$. We denote the adjoint of $A$ with $A^* = B^{-1}A^T$. Note that for all $x \in \mathbb{R}^n$ and $u \in \mathbb{R}^m$
\begin{equation}
\inner{Ax}{u} = \inner{x}{A^* u}.
\end{equation}

We use $\normu{\cdot}$ to denote the norms induced by the inner product in  $\mathbb{R}^n$ and $\mathbb{R}^m$, that is:
\begin{align*}
\forall x\in \mathbb{R}^n:& \quad \normu{x} = \sqrt{x^T B x},\\
\forall v\in \mathbb{R}^m:& \quad \normu{v} = \sqrt{v^T v}.
\end{align*}


We use $\norm{\cdot}$ and $\dnorm{\cdot}$ to denote a regularizing norm and its dual on $\mathbb{R}^n$. The latter is defined as:
$$\dnorm{y} = \sup{\{\inner{y}{x}\, | \, \norm{x} \leq 1\}}.$$

Given a convex function $f: \mathbb{R}^n \mapsto \mathbb{R}$, $\partial f\left(x\right)$ denotes the set of subgradients of $f$ at $x$, i.e., the set of all $z\in \mathbb{R}^n$ such that
$$\forall y \in \mathbb{R}^n: \quad f(y) \geq f(x) + \inner{z}{y-x}.$$

When $f$ is differentiable, $\partial f\left(x\right) = \{\nabla f(x)\}$. Note that $\xi \in \partial \norm{x}$ if and only if
\begin{align}\label{norm-sub}
&\inner{\xi}{x} = \norm{x}, \\
&\dnorm{\xi} \leq 1.
\end{align}

We say $f$
is strongly convex with strong convexity parameter $\mu_f$ when $f\left(x\right)-\frac{\mu_f}{2} \normu{x}^2$ is convex. For a differentiable function this implies that for all $x,y \in \mathbb{R}^n$:
\begin{equation}\label{str1}
f\left(y\right) \geq f\left(x\right) + \langle \nabla f\left(x\right) , y-x \rangle + \frac{\mu_f}{2} \normu{x-y}^2.
\end{equation}

We call the gradient of a differentiable function Lipschitz continuous with Lipschitz constant $L_f$, when for all $x,y \in \mathbb{R}^n$:
\begin{equation}\label{lip2}
\normu{\nabla f\left(x\right)-\nabla f\left(y\right)} \leq L_f \normu{y-x}.
\end{equation}

For a convex function $f$, gradient Lipschitz continuity is equivalent to the following inequality [see \cite{nesterov2004introductory} Lemma 1.2.3. and Theorem 2.1.5]:
\begin{equation}\label{lip1}
f\left(y\right) \leq f\left(x\right) + \langle \nabla f\left(x\right) , y-x \rangle + \frac{L_f}{2} \normu{x-y}^2,
\end{equation}
\noindent for all $x,y \in \mathbb{R}^n$.

\section{Properties of the regularizing norm and $A$}\label{sec:properties}

In this section we introduce our assumptions on the regularizing norm $\norm{\cdot}$, and derive the properties of the norm based on these assumptions. 
The homotopy algorithm of \cite{xiao2013proximal} for the $\ell_1$-regularized problem is designed so that the  iterates maintain low cardinality throughout the algorithm, therefore one can use the restricted eigenvalue property of $A$, when $A$ acts on these iterates. Said another way, the squared loss term behaves like a strongly convex function over the algorithm iterates, which is why the algorithm can achieve a fast convergence rate. 
In the proof, \cite{xiao2013proximal} uses the the structure of the subdifferential of the $\ell_1$ norm,
\begin{align*}
\partial \norml{x} = \{{\rm sgn}(x) + v ~|~ v_i= 0 \text{  when } x_i \neq 0, \; \norm{v}_{\infty}\leq 1 \},
\end{align*}
as well as the following properties that hold for the cardinality function,
\begin{align*}
\norml{x}^2 &\leq \card(x) \normu{x}^2,\\
\card(x+y)  &\leq \card(x) + \card(y) \quad \text{(sublinearity).}  
\end{align*}

We first give our assumption on the structure of the subdifferential of a class norms (which inlcudes $\ell_1$ and nuclear norms but is much more general), and then derive the rest of the properties needed for generalizing the results of \cite{xiao2013proximal}. 

Before stating our assumptions, we add some more definitions to our tool box. Let $S^{n-1} = \{x \in \mathbb{R}^{n}\,|\,\normu{x}= 1\}$, and let $\mathcal{G}_{\norm{\cdot}}$ be the set of extreme points of the norm ball $\mathcal{B}_{\norm{\cdot}}:= \{x ~|~ \norm{x} \leq 1\}$. We impose two conditions on the regularizing norm.

\begin{condition}\label{condition-2} For any $x \in \mathcal{G}_{\norm{\cdot}}$, $\normu{x} = 1$, i.e., all the extreme points of the norm ball have unit $\normu{\cdot}$-norm.
\end{condition}

The second condition on the norm is the decomposability condition introduced in  \cite{candes2012simple}, which was inspired by the assumption introduced in \cite{negahban2012unified}. 

\begin{condition}[Decomposability]\label{condition-1}
For all $x \in \mathbb{R}^n$, there exists a subspace $T_x$ and a vector $e_x \in T_x$ such that
\begin{align}\label{decomposable}
\partial \norm{x} = \{e_x+v~|~ v \in {T_x^{\bot}}, \dnorm{v} \leq 1\}.
\end{align}
\end{condition}
Note that $x \in T_x$ for all $x \in \mathbb{R}^n$ because if $x \notin T_x$, then $x = y+z$ with $y \in T_x$ and $z \in T_x^{\bot}-\{0\}$. Let $z' = z / \dnorm{z}$. Since $e_x + z' \in \partial \norm{x}$, $\norm{x} = \inner{e_x + z'}{y + z} = \norm{x} + \normu{z}^2/\dnorm{z}$, which is a contradiction.

\vspace{0.1 in}
The decomposability condition has been used in both \cite{candes2012simple} and \cite{negahban2012unified} to give a simpler and unified proof for recovery of several structures such as sparse vectors and low-rank matrices. 

When attempting to extend this algorithm to general norms, several challenges arise. First, what is the appropriate generalization of cardinality for other structures and their corresponding norms? Essentially, we would need to count the number of nonzero coefficients in an appropriate representation and ensure there is a small number of nonzero coefficients in our iterates, to be able to apply a similar proof idea as in \cite{xiao2013proximal}.

The next theorem captures one of our main results for any decomposable norm. This theorem provides a new set of conditions that is based on the geometry of the norm ball, and we show are equivalent to decomposability on $\mathbb{R}^n$.  As a result, one can find a decomposition for any vector in $\mathbb{R}^n$ in terms of an orthogonal subset of $\mathcal{G}_{\norm{\cdot}}$.

\begin{thm}[Orthogonal representation]\label{OrRep}
Suppose $\mathcal{G}_{\norm{\cdot}} \subset S^{n-1}$, then $\norm{\cdot}$ is decomposable if and only if for any $x \in \mathbb{R}^n-\{0\}$ and $a_1 \in \argmax_{a \in \mathcal{G}_{\norm{\cdot}}}{\inner{a}{x}}$ there exist $a_2, \ldots, a_k \in  \mathcal{G}_{\norm{\cdot}}$ such that $\{a_1, a_2, \ldots, a_k\}$ is an orthogonal set that satisfies the following conditions:

\begin{itemize} \label{I}
\item[I] There exists $\{\gamma_i > 0 |\, i= 1,\ldots, k\}$ such that:
\begin{align}\notag
x = \sum_{i=1}^{k}{\gamma_i a_i}, \\ \label{norm-sum}
\norm{x} = \sum_{i=1}^{k}{\gamma_i}.
\end{align}
\label{II}
\item[II] For any set $\{\eta_i\,|\, |\eta_i| \leq 1,  i=1,\ldots, k\}$:
\begin{equation}\label{arbit-sum} 
\dnorm{\sum_{i=1}^{k}{\eta_i a_i}} \leq 1.
\end{equation}
\end{itemize}
Moreover, if $\{a_1, a_2, \ldots, a_k\} \subset \mathcal{G}_{\norm{\cdot}}$ satisfy I and II, then $e_x = \sum_{i=1}^{k} a_i$.
\end{thm}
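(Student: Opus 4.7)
My plan is to prove both implications and the moreover clause in turn.

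For the reverse direction $(\Leftarrow)$, I would define $e_x := \sum_{i=1}^k a_i$ and first check it is a subgradient: Condition 1 together with orthogonality of the $a_i$'s gives $\inner{e_x}{x} = \sum_i \gamma_i = \norm{x}$ via I, while II at $\eta_i = 1$ gives $\dnorm{e_x} \leq 1$. To match Condition 2, I would take $T_x$ to be the orthogonal complement of the span of $\partial \norm{x} - e_x$, equivalently the set of directions on which every $\xi \in \partial \norm{x}$ agrees. The main structural step is that for any $\xi \in \partial \norm{x}$, combining $\sum_i \gamma_i \inner{\xi}{a_i} = \sum_i \gamma_i$ with $\inner{\xi}{a_i} \leq \dnorm{\xi}\norm{a_i} \leq 1$ forces $\inner{\xi}{a_i} = 1$ for every $i$; this places each $a_i \in T_x$ and writes $\xi - e_x \in T_x^\perp$. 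It remains to verify the dual-norm bound $\dnorm{\xi - e_x} \leq 1$ and, conversely, that every $v \in T_x^\perp$ with $\dnorm{v} \leq 1$ yields $\dnorm{e_x + v} \leq 1$; I plan to handle both by expanding against a generic test vector $y$ with $\norm{y} \leq 1$, invoking the orthogonal representation of $y$, and using II.

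For the forward direction $(\Rightarrow)$, assume decomposability. Given $x \neq 0$ and $a_1 \in \argmax_{a \in \mathcal{G}_{\norm{\cdot}}} \inner{a}{x}$, I would first show $a_1 \in T_x$. The key inputs are Condition 1 ($\normu{a_1} = 1$), the identity $\inner{a_1}{x} = \dnorm{x}$ (the primal-unit-ball support function), and the fact that $e_x + w \in \partial \norm{x}$ for $w \in T_x^\perp$ with $\dnorm{w} \leq 1$. Were $a_1 \notin T_x$ with $a_1^\perp \neq 0$, choosing $w$ aligned with $\pm a_1^\perp / \dnorm{a_1^\perp}$ (sign matching the sign of $\inner{e_x}{a_1}$) would push $\inner{e_x + w}{a_1}$ strictly above $1$, contradicting $\dnorm{e_x + w}\norm{a_1} \leq 1$. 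Setting $\gamma_1 := \inner{a_1}{x}$ and $x_2 := x - \gamma_1 a_1$, I would then argue that the decomposable structure restricts: $e_{x_2} = e_x - a_1$ belongs to $\partial \norm{x_2}$ and $T_{x_2}$ can be taken inside $T_x \cap a_1^\perp$. Iterating yields orthogonal $a_2, \ldots, a_k$ with $\sum \gamma_i = \norm{x}$ by a telescoping argument, proving I; property II then follows by identifying $\sum \eta_i a_i$ with a subgradient of $\norm{\cdot}$ at the signed weighted point $\sum \text{sgn}(\eta_i)|\eta_i|\gamma_i a_i$ and reading off $\dnorm{\cdot} \leq 1$.

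For the moreover clause, suppose $\{a_1, \ldots, a_k\} \subset \mathcal{G}_{\norm{\cdot}}$ satisfies I and II under decomposability. The computation from the $(\Leftarrow)$ direction already gives $\sum a_i \in \partial \norm{x}$, and the derivation that $\inner{\xi}{a_i} = 1$ for every $\xi \in \partial \norm{x}$ forces each $a_i \in T_x$, so $\sum a_i \in \partial \norm{x} \cap T_x$. Decomposability gives $\partial \norm{x} \cap T_x = \{e_x\}$ because intersecting $\{e_x + v : v \in T_x^\perp\}$ with $T_x$ leaves only $v = 0$, and we conclude $e_x = \sum a_i$.

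The most delicate step will be the peeling in the forward direction, specifically proving that $x_2 = x - \gamma_1 a_1$ inherits a compatible decomposable structure (so the recursion is legal) and that the next $\argmax$ atom is automatically orthogonal to $a_1$. A secondary hurdle is the $\dnorm{e_x + v} \leq 1$ control in the reverse direction, since dual norms do not split additively across $T_x$ and $T_x^\perp$; here I expect to need the orthogonal representation of the test vector rather than any triangle-inequality shortcut.
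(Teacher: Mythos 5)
Your skeleton matches the paper's (two directions plus the identification $e_x=\sum_i a_i$, the observation that $\inner{\xi}{a_i}=1$ for every $\xi\in\partial\norm{x}$, and the clean $\partial\norm{x}\cap T_x=\{e_x\}$ argument for the moreover clause), but the two steps you defer are precisely where the proof lives, and your plans for them do not go through as stated. In the sufficiency direction, bounding $\dnorm{e_x+v}$ for $v\in T_x^{\bot}$ with $\dnorm{v}\leq 1$ by ``expanding against a test vector $y$ and invoking its orthogonal representation'' is circular: the supremum defining the dual norm is attained on $\mathcal{G}_{\norm{\cdot}}$, whose elements have the trivial one-atom representation, and condition II says nothing about $\inner{e_x+v}{b}$ for atoms $b$ outside $\vspan\{a_1,\ldots,a_k\}$. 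The paper's mechanism is different and essential: it first pins down $\partial\norm{a}$ for a \emph{single} atom by applying hypotheses I and II to the subgradient vectors $\xi\in\partial\norm{a_1}$ themselves (for which $a_1$ is again an argmax atom, since $\inner{a_1}{\xi}=\dnorm{\xi}=1$), deduces that $C=\partial\norm{a_1}-a_1$ is symmetric and absorbing in its span, and only then runs an induction over the partial sums $z_l=\sum_{i=k-l+1}^{k}a_i+v$ to control $\dnorm{\sum_i a_i+v}$. Nothing in your sketch produces this single-atom characterization, and without it the slab structure of $\partial\norm{x}$ cannot be verified.

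In the necessity direction, your opening claim that $a_1\in T_x$ is not justified by the perturbation you describe: to push $\inner{e_x+w}{a_1}$ above $1$ by adding $w$ proportional to the $T_x^{\bot}$-component of $a_1$, you must already know $\inner{e_x}{a_1}=1$, which is essentially what is at stake; a priori you only have $\inner{e_x}{a_1}\leq\dnorm{e_x}\norm{a_1}\leq 1$, so the perturbed inner product need not exceed $1$. The paper instead observes $\gamma_1=\inner{a_1}{x}=\dnorm{x}$, so $x/\gamma_1\in\partial\norm{a_1}$ and hence $\Delta_1=x-\gamma_1 a_1\in T_{a_1}^{\bot}$; the recursion is then kept legal by Lemma~\ref{interm}, which shows that the argmax atom of any $y\in T_a^{\bot}$ again lies in $T_a^{\bot}$ --- this is what forces the successive atoms to be orthogonal, and it has no counterpart in your plan. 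Your assertions $e_{x_2}=e_x-a_1$ and $T_{x_2}\subseteq T_x\cap a_1^{\bot}$ are exactly the content that needs proving, and your derivation of II by viewing $\sum_i\eta_i a_i$ as a subgradient at a signed weighted point presupposes knowing $e$ at that other point, i.e., the moreover clause at a different vector; the paper avoids this circularity by first proving $a_i\in T_{a_j}^{\bot}$ for $i\neq j$ and then inducting on the number of nonzero $\eta_i$ using only the single-atom subdifferentials.
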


The proof of Theorem \ref{OrRep} is presented in Appendix B. 

\vspace{0.1 in}
We will see in section \ref{sec::conv} that we need an orthogonal representation for all vectors to be able to bound the number of nonzero coefficients throughout the algorithm. First, we define a quantity $K(x)$ that bounds the ratio of the norm $\|\cdot\|$ to the Euclidean norm, and plays the same role in our analysis as cardinality played in \cite{xiao2013proximal}. Then we show that $K(x)$ is a sublinear function, that is, $K(x+y)\leq K(x)+K(y)$ for all $x,y$. This is a key property that is needed in the convergence analysis. Define $K: \mathbb{R}^n \mapsto \{0,1, 2, \ldots, n\}$

$$K\left(x\right) = \normu{e_x}^2.$$

Note that for every $x \in \mathbb{R}^n$,
\begin{align}\label{norm-ratio}
\norm{x}^2  = \inner{e_x}{x}^2 \leq  \normu{e_x}^2 \normu{x}^2= K(x) \normu{x}^2.
\end{align}

Here, the first equality follows from \eqref{norm-sub}, and the inequality follows from the Cauchy-Schwarz inequality.  
 In the analysis of homotopy algorithm we utilize \eqref{norm-ratio} alongside the structure of the subgradient given by \eqref{decomposable}.
$\ell_1$, $\ell_{1,2}$, and nuclear norms are three important examples that satisfy conditions \ref{condition-2} and \ref{condition-1}. Here we briefly discuss each one of these norms. 
\begin{itemize}

 \item{\bf Nuclear norm} on $\mathbb{R}^{d_1\times d_2}$ is defined as 
$$\normn{X} = \sum_{i=1}^{\min{\{d_1,d_2\}}}{\sigma_i\left(X\right)}$$
 Where $\sigma_i\left(X\right)$ is the $i^{th}$ largest singular value of $X$ given by the singular value decomposition $X = \sum_{i=1}^{\min{\{d_1,d_2\}}}{\sigma_i\left(X\right) u_i v_i^T}$. With the trace inner product $\inner{X}{Y} = \trace\left(X^T Y\right)$, nuclear norm satisfies conditions \ref{condition-2} and \ref{condition-1}. In this case, $K(X)  = {\rm rank}(X)$, $\gamma_i= \sigma_{i}\left(X\right)$ and $a_i = u_i v_i^T$ for $i \in \{1, 2, \ldots, {\rm rank}(X)\}$. The subspace $T_X$ is given by:
 $$T_X = \left\{\left.\sum_{i=1}^{{\rm rank}(X)} u_i z_i^T + {z'}_i v_i^T\; \right| \; z_i \in \mathbb{R}^{d_2}, {z'}_i \in \mathbb{R}^{d_1}, 
 \; \text{for all}\; i\right\},$$
 \noindent while $e_X = \sum_{i=1}^{{\rm rank}(X)} u_i v_i^T$.

\item{\bf Weighted $ \ell_1$ norm} on $\mathbb{R}^{n}$ is defined as:
$$\norml{x} = \sum_{i=1}^{n}{w_i |x_i|}$$

\noindent where $w$ is a vector of positive weights. With $\inner{x}{y} = \sum_{i=1}^{n} w_i^2 x_i y_i$, $\ell_1$ norm satisfies conditions \ref{condition-2} and \ref{condition-1}. For $\ell_1$ norm, $K(x) = |\{i|x_i \neq 0 \}|$, $\{\gamma_1,\gamma_2, \ldots, \gamma_k\}= \{w_i |x_{i}|  \;|\;  |x_{i}| > 0, i = 1,\ldots,n\}$. $T_x$ is the support of $x$, which is defined as:
$$T_x = \{y \in \mathbb{R}^n \;|\; y_i = 0 \text{  if  } x_i = 0\},$$
\noindent while the $i^{th}$ element of $e_x$ is ${\rm sign}(x_i) w_i$.

\item{\bf $\ell_{1,2}$ norm on $\mathbb{R}^{d_1\times d_2}$:}   For a given inner product $\inner{\cdot}{\cdot}: \mathbb{R}^{d_1}\times \mathbb{R}^{d_1} \mapsto \mathbb{R}$ and its induced norm $\normu{\cdot}$ on $\mathbb{R}^{d_1}$, We define:
$$\norm{X}_{1,2} = \sum_{i=1}^{d_2}{\normu{X_i}},$$
\noindent  where $X_i$ denotes the $i^{th}$ column of $X$. With inner product  $\inner{X}{Y} = \sum_{i=1}^{d_2}{\inner{X_i}{Y_i}}$, $\ell_{1,2}$ norm satisfies conditions \ref{condition-2} and \ref{condition-1}. For this norm, $K(X) = |\{i|X_i \neq 0 \}|$ and $\{\gamma_1,\gamma_2,\ldots,\gamma_k\}= \{\normu{X_{i}}\;|\;\normu{X_{i}} > 0, i = 1, \ldots, d_2\}$. $T_X$ is the column support of $X$, which is defined as:
$$T_X = \left\{\left.[Y_1, Y_2 , \ldots, Y_{d_2}] \in \mathbb{R}^{d_1 \times d_2} \;\right|\; Y_i = 0 \text{  if  } X_i = 0\right\},$$
\noindent while the $i^{th}$ column of $e_X$ is equal to $0$ if $X_i = 0$ and is equal to ${X_i}/{\normu{X_i}}$ otherwise.

\end{itemize}

Our second result on properties of decomposable norms is captured in the next theorem which establishes sublinearity of $K$ for decomposable norms.

\begin{thm}\label{thm-con-2}
For all $x, y \in \mathbb{R}^n$ 
\begin{equation}
K\left(x+y\right) \leq K\left(x\right)+K\left(y\right) .
\end{equation}

\end{thm}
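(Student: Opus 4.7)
My plan is to combine two ingredients: the orthogonal atomic representation of Theorem~\ref{OrRep}, and the variational characterization $K(z)=\min_{\xi\in\partial\norm{z}}\normu{\xi}^2$ of $K$, which is a direct consequence of the decomposability structure of $\partial\norm{z}$.

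First I would invoke Theorem~\ref{OrRep} to write
\begin{align*}
x=\sum_{i=1}^{K(x)}\gamma_i a_i,\qquad y=\sum_{j=1}^{K(y)}\mu_j b_j,
\end{align*}
with $\{a_i\},\{b_j\}\subset\mathcal{G}_{\norm{\cdot}}$ orthonormal and $e_x=\sum_i a_i$, $e_y=\sum_j b_j$, so that $K(x)=\normu{e_x}^2$ and $K(y)=\normu{e_y}^2$. Condition~\ref{condition-1} gives $\partial\norm{x+y}=e_{x+y}+\{v\in T_{x+y}^{\bot}:\dnorm{v}\le 1\}$ with $e_{x+y}\in T_{x+y}$, so any $\xi=e_{x+y}+v$ in that subdifferential satisfies $\normu{\xi}^2=\normu{e_{x+y}}^2+\normu{v}^2\ge K(x+y)$. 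Thus the problem reduces to exhibiting some $\xi\in\partial\norm{x+y}$ with $\normu{\xi}^2\le K(x)+K(y)$.

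I would seek $\xi$ as a linear combination of the atoms $\{a_i\}\cup\{b_j\}$, tuned so that $\inner{\xi}{x+y}=\norm{x+y}$ and $\dnorm{\xi}\le 1$. Condition~II of Theorem~\ref{OrRep}, applied after extracting an orthogonal subfamily, bounds the dual norm of such a combination by $1$ whenever the coefficients lie in $[-1,1]$. The naive candidate $\xi=e_x+e_y$ gives $\normu{e_x+e_y}^2=K(x)+K(y)+2\inner{e_x}{e_y}$, which already meets the Euclidean bound when $\inner{e_x}{e_y}\le 0$, but $e_x+e_y$ is not in $\partial\norm{x+y}$ in general; I would therefore add a correction supported in $\vspan\{a_i\}+\vspan\{b_j\}$ to enforce the subgradient identity while preserving both $\dnorm{\xi}\le 1$ and the $2$-norm bound.

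The main obstacle I anticipate is jointly controlling $\dnorm{\xi}\le 1$ (which via Condition~II demands coefficients bounded by $1$), the linear constraint $\inner{\xi}{x+y}=\norm{x+y}$, and the Euclidean bound $\normu{\xi}^2\le K(x)+K(y)$, since the atomic families $\{a_i\}$ and $\{b_j\}$ need not be mutually orthogonal and $\normu{\xi}^2$ is then not simply the sum of squared coefficients. A fallback strategy is induction on $K(y)$: the base case $K(y)=1$ reduces the statement to $K(x+\mu b)\le K(x)+1$ for a single atom $b\in\mathcal{G}_{\norm{\cdot}}$ and scalar $\mu$, a rank-one update that should be tractable by applying Theorem~\ref{OrRep} directly to $x+\mu b$ and comparing its orthogonal decomposition with that of $x$.
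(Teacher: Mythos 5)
There is a genuine gap: neither of your two routes actually contains the argument that makes the theorem true. Your reformulation via $K(z)=\min_{\xi\in\partial\norm{z}}\normu{\xi}^2$ is fine (it is exactly $\normu{e_{x+y}}^2$, attained at $v=0$), but the first route then requires constructing, from the atoms of $x$ and $y$ alone, some $\xi\in\partial\norm{x+y}$ with $\normu{\xi}^2\le K(x)+K(y)$, and you leave the ``correction'' step completely unspecified. The obstacle you yourself flag is fatal to the sketch as it stands: the families $\{a_i\}$ and $\{b_j\}$ need not be orthogonal to each other (they may even coincide or overlap), Condition~II of Theorem~\ref{OrRep} only controls $\dnorm{\cdot}$ for combinations of a \emph{single} orthogonal family, and there is no mechanism offered to certify simultaneously $\dnorm{\xi}\le 1$, $\inner{\xi}{x+y}=\norm{x+y}$, and the Euclidean bound. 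The fallback route begs the question at its base case: the claim $K(x+\mu b)\le K(x)+1$ for a single atom $b$ \emph{is} the nontrivial content of the theorem, and ``comparing the orthogonal decomposition of $x+\mu b$ with that of $x$'' gives no traction in the abstract setting, because the two decompositions can use entirely different atoms (for the nuclear norm, adding a rank-one matrix changes all singular vectors; the rank inequality there is linear algebra, not a comparison of SVDs, and no analogous tool is available for a general decomposable norm).

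What is missing is precisely the characterization the paper proves: $K(x)$ equals $l(x)$, the minimum number of extreme points of the norm ball needed to write $x$ as a linear combination. The paper first shows (via Condition~II and the identification of $e_y$) that for an orthogonal family $\{a_1,\ldots,a_k\}\subset\mathcal{G}_{\norm{\cdot}}$ satisfying Condition~II, any combination $\sum_i\beta_i a_i$ has $K$ equal to the number of nonzero $\beta_i$; it then proves $K(x)\le l(x)$ by induction on $K(x)$ with a geometric argument inside the cone generated by the orthogonal atoms of $x$ (splitting a putative shorter representation and passing to the boundary of that cone). Once $K=l$, sublinearity is immediate, since concatenating representations of $x$ and $y$ with $l(x)\le K(x)$ and $l(y)\le K(y)$ atoms represents $x+y$ with at most $K(x)+K(y)$ atoms. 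Some equivalent of this ``minimal atomic representation'' lemma is what your proposal needs and does not supply; with it, your induction-on-$K(y)$ fallback would indeed close, but without it both routes stall exactly at the step you label as tractable.
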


Theorem \ref{thm-con-2} for $\ell_1$, $\ell_{1,2}$ and nuclear norm is equivalent to sublinearity of cardinality of vectors, number of non-zero columns and rank of matrices. The proof of this theorem is included in Appendix B.

\subsection{Properties of $A$}

Restricted Isometry Property was first discussed in \cite{candes2006near} for sparse vectors. Generalization of that concept to low rank matrices was introduced in \cite{recht2010guaranteed}. Note that if $K\left(x\right) \leq k$, then $\norm{x} \leq \sqrt{k} \normu{x}$. Based on this observation we define restricted isometry constants of $A \in \mathbb{R}^{m\times n}$ as:

\begin{definition}
The upper (lower) restricted isometry constant $\rho_{+}\left({A},k\right)$ ($\rho_{-}\left({A},k\right)$) of 
a matrix $A\in \mathbb{R}^{m\times n}$ is the smallest (largest) positive constant that satisfies this inequality:
$$\rho_{-}\left(A,k\right) \normu{x}^2 \leq \normu{A x}^2 \leq \rho_{+}\left(A,k\right) \normu{x}^2,$$

\noindent whenever $ \norm{x}^2 \leq k \normu{x}^2 $. 
\end{definition}


\begin{proposition}\label{str-lip}
Let $A \in \mathbb{R}^{m\times n}$ and $f\left(x\right) = \frac{1}{2} \normu{A x-b}^2$. Suppose that $\rho_{+}\left(A,k\right)$ and $\rho_{-}\left(A,k\right)$ are restricted isometry constants corresponding to $A$, then:
\begin{equation}\label{str}
f\left(y\right) \geq f\left(x\right) + \langle \nabla f\left(x\right) , y-x \rangle + \frac{1}{2} \rho_{-}\left(A,k\right) \normu{x-y}^2,
\end{equation}
\begin{equation}\label{lip}
f\left(y\right) \leq f\left(x\right) + \langle \nabla f\left(x\right) , y-x \rangle + \frac{1}{2} \rho_{+}\left(A,k\right) \normu{x-y}^2,
\end{equation}
for all $x, y \in \mathbb{R}^{ n}$ such that $ \norm{x-y}^2 \leq k \normu{x-y}^2 $.
\end{proposition}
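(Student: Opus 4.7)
The proof is essentially a direct computation combined with one application of the RIP definition. The plan is to first show that the quantity $f(y)-f(x)-\langle\nabla f(x),y-x\rangle$ is exactly $\tfrac{1}{2}\|A(y-x)\|_2^2$, and then bound this quadratic using the restricted isometry constants applied to the vector $z := y-x$, which by hypothesis lies in the set $\{z : \|z\|^2 \le k\|z\|_2^2\}$ on which the RIP bounds are valid.

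First I would compute the gradient. Since $f(x) = \tfrac{1}{2}\langle Ax-b, Ax-b\rangle$ (with the ordinary dot product on $\mathbb{R}^m$), a standard calculation using the adjoint identity $\langle Ax,u\rangle = \langle x, A^*u\rangle$ from the Preliminaries gives $\nabla f(x) = A^*(Ax-b)$. Next, using the polarization identity $\|u\|_2^2-\|v\|_2^2 = \langle u-v, u+v\rangle$ with $u=Ay-b$ and $v=Ax-b$, I would expand
\begin{align*}
f(y)-f(x) &= \tfrac{1}{2}\langle A(y-x),\, A(y-x) + 2(Ax-b)\rangle\\
&= \tfrac{1}{2}\|A(y-x)\|_2^2 + \langle A(y-x), Ax-b\rangle\\
&= \tfrac{1}{2}\|A(y-x)\|_2^2 + \langle \nabla f(x), y-x\rangle,
\end{align*}
where the last step uses the adjoint identity again. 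Therefore $f(y)-f(x)-\langle\nabla f(x),y-x\rangle = \tfrac{1}{2}\|A(y-x)\|_2^2$.

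Finally, under the standing hypothesis $\|x-y\|^2 \le k\|x-y\|_2^2$, the vector $z=y-x$ is admissible in the definition of the restricted isometry constants, so
\begin{equation*}
\rho_{-}(A,k)\|y-x\|_2^2 \;\le\; \|A(y-x)\|_2^2 \;\le\; \rho_{+}(A,k)\|y-x\|_2^2.
\end{equation*}
Plugging these two inequalities into the identity above yields \eqref{str} and \eqref{lip} respectively. There is no genuine obstacle here: the only subtle point is being careful to distinguish the regularizing norm $\|\cdot\|$ (which appears in the admissibility condition $\|x-y\|^2\le k\|x-y\|_2^2$) from the inner-product norm $\|\cdot\|_2$, and to use the adjoint $A^* = B^{-1}A^T$ correctly so that $\nabla f$ is taken with respect to the inner product $\langle\cdot,\cdot\rangle$ induced by $B$ rather than the standard Euclidean one.
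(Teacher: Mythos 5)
Your proposal is correct and follows the same route as the paper: the paper also reduces both inequalities to the identity $\frac{1}{2}\normu{A(x-y)}^2 = f(y)-f(x)-\langle\nabla f(x),y-x\rangle$ and then invokes the definition of the restricted isometry constants on the admissible vector $y-x$. Your version merely spells out the gradient computation and the expansion that the paper leaves implicit.
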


Proposition \eqref{str-lip} follows from the definition of restricted isometry constants and the following equality:
$$\frac{1}{2} \normu{A\left(x-y\right)}^2 = f\left(y\right) - f\left(x\right) - \langle \nabla f\left(x\right) , y-x \rangle.$$



\section{Proximal-gradient method and homotopy algorithm}
We state the proximal-gradient method and the  homotopy algorithm for the following optimization problem:
$${\operatorname{minimize}}{\quad \phi_{\lambda}\left(x\right) = f\left(x\right)+\lambda \norm{x}},$$

\noindent where $f\left(x\right) = \frac{1}{2} \normu{{A}x-b}^2$. While, for simplicity, we analyze the homotopy algorithm for the least squares loss function, the analysis can be extended to every function of form $f(x) = g(A x)$ when $g$ is a differentiable strongly convex function with Lipschitz continuous gradient.. 
The key element in the proximal-gradient method is the proximal operator which was developed by Moreau 
\cite{moreau1962fonctions} and later extended to maximal monotone operators by 
Rockafellar \cite{rockafellar1976monotone}. 
Nesterov has proposed several variants of the proximal-gradient methods \cite{nesterov2013gradient}.  In this section, we discuss the gradient method with adaptive line search. For any $x, y \in \mathbb{R}^n$ and positive $L$, we define: 
\begin{align*}
m_{\lambda, L}\left(y,x\right) &= f\left(y\right) + \langle \nabla f\left(y\right) , x-y \rangle + \frac{L}{2} \normu{x-y}^2 +\lambda \norm{x},\\
{\rm Prox}_{\lambda,L}\left(y\right) &= \argmin_{x \in \mathbb{R}^n}{m_{\lambda, L}\left(y,x\right)} \\
\omega_{\lambda}\left(x\right) &= \min_{\xi \in \partial  \norm{x}}{\dnorm{\lambda \xi + \nabla f\left(x\right)}}.
\end{align*}

Xiao and Zhang \cite{xiao2013proximal} have considered the proximal-gradient homotopy algorithm for $\ell_1$ norm. Here we state it for general norms.
Algorithm \eqref{homotopy}, introduces the homotopy algorithm and contains the proximal-gradient method as a subroutine. The stopping criteria in the proximal-gradient method is based on the quantity $$\dnorm{M_{t} (x^{(t-1)} - x^{(t)}) +\nabla f(x^{(t)})- \nabla f(x^{(t-1)})},$$ which is an upper bound on $\omega_{\lambda}(x^{(t)})$. This follows from the fact that since $x^{(t)} = \argmin_{x \in \mathbb{R}^n}{m_{\lambda, M_{t}}(x^{(t-1)}, x)}$, there exists $\xi \in \partial \norm{x^{(t)}}$ such that $\nabla f(x^{(t-1)}) + \lambda \xi + M_{t} (x^{(t)} - x^{(t-1)})=0$. Therefore,
\begin{align}\notag
    \omega_{\lambda}(x^{(t)}) \leq \dnorm{\lambda \xi + \nabla f(x^{(t)})} 
    &=  \dnorm{\lambda \xi + \nabla f(x^{(t-1)}) +\nabla f(x^{(t)})- \nabla f(x^{(t-1)})}\\   \label{omega-1}
    &\leq  \dnorm{M_{t} (x^{(t-1)} - x^{(t)}) +\nabla f(x^{(t)})- \nabla f(x^{(t-1)})}.
    \end{align}


 
The homotopy algorithm
reduces the value of $\lambda$ in a series of steps and in each step applies the proximal-gradient method.
At step $t$, $\lambda_t = \lambda_0 \eta^{t}$ and $\epsilon_t = \delta' \lambda_t$ with $\eta \in (0,1)$ and $\delta' \in \left(0,1\right)$. In the proximal-gradient method and the backtracking subroutine, the parameters
\ $\gamma_{\rm{dec}}\geq 1$ and $\gamma_{inc
} > 1$ should be initialized. Since the function $f$ satisfies the inequality \eqref{lip1}, it is clear that $L_{\min}$ should be chosen less than $L_f$.

\begin{algorithm}
\renewcommand{\algorithmicrequire}{\textbf{Input:}}
 \renewcommand{\algorithmicensure}{\textbf{Parameters:}}
\caption{Homotopy}
\label{homotopy}
\begin{algorithmic}
\Require{$ \lambda_{\rm tgt} > 0,\, \epsilon > 0$}
\Ensure{$\eta \in \left(0,1\right), \, \delta' \in \left(0,1\right), \,  L_{\min} > 0$}
\State{$y^{\left(0\right)} \leftarrow 0,\,\lambda_0 \leftarrow \dnorm{A^* b}, \, M \leftarrow L_{\min}, \, N \leftarrow \lfloor \log\left(\frac{\lambda_{\rm \rm tgt}}{\lambda_0}\right)/\log\left(\eta\right)\rfloor$}
\For{$t = 0,1,\ldots, N-1$}
\State{$\lambda_{t+1} \leftarrow \eta \lambda_{t} $}
\State{$\epsilon_t \leftarrow \delta' \lambda_t$}
\State {$[y^{(t+1)}, M] \leftarrow $ ProxGrad\_${\phi_{\lambda_{t+1}}}$  $\left(y^{(t)},M,L_{\min},\epsilon_t\right)$}
\EndFor
 \renewcommand{\algorithmicensure}{\textbf{Parameter:}}
\State{$[y ,M] \leftarrow $ ProxGrad\_${\phi_{\lambda_{\rm \rm tgt}}}$  $\left(y^{(N)},M,L_{\min},\epsilon\right)$}
\Algphase{{\bf Subroutine 1} $[x,  M] = $ ProxGrad\_${\phi_{\lambda}}$  $\left(x^{(0)},L_0,L_{\min},\epsilon'\right)$}
\Ensure{$\gamma_{\rm dec}\geq 1, $}
\State{$t \leftarrow 0$}
\Repeat{}
\State $[x^{\left(t+1\right)}, M_{t+1}]\leftarrow \text{Backtrack\_}_{\phi_{\lambda}}\left(x^{\left(t\right)},L_{t}\right)$
\State{$L_{t+1}\quad\quad\quad \leftarrow \max\{L_{\min}, M_{t+1}/\gamma_{\rm{dec}}\}$}
\State{$t \leftarrow t+1$}
\Until{{$\dnorm{M_{t}(x^{(t-1)} - x^{(t)}) + \nabla f\left(x^{\left(t\right)}\right) - \nabla f\left(x^{\left(t-1\right)}\right)}\leq \epsilon'$}}
\State{$x \leftarrow x^{\left(t\right)}, \, M \leftarrow M_t$}
\Algphase{{\bf Subroutine 2} $[y , M] = $ Backtrack\_$\phi_{\lambda}$ $\left(x,L\right)$}
\Ensure{$ \gamma_{\rm inc} > 1$}
\While{$\phi_{\lambda}\left({\rm Prox}_{\lambda,L}\left(x\right)\right)> m_{\lambda, L}\left(x,{\rm Prox}_{\lambda,L}\left(x\right)\right)$}
\State {$\quad  \quad \quad L\leftarrow {\gamma_{\rm{inc}}}L$} 
\EndWhile
\State{$y \leftarrow {\rm Prox}_{\lambda,L}\left(x\right), \, M \leftarrow L$}
\end{algorithmic}
\end{algorithm}
 Theorem 5 in \cite{nesterov2013gradient} states that the proximal-gradient method has a linear rate of convergence when $f$ satisfies \eqref{str1} and \eqref{lip1}. In proposition~\ref{convresult} we restate that theorem with minimal assumptions which is $f$ satisfies \eqref{str1} and \eqref{lip1} on a restricted set. The proof of this proposition is given in appendix B.

\begin{proposition}\label{convresult}
Let $x^* \in \argmin \phi_\lambda$. If for every $t$:
\begin{align}\label{str1-itr}
f\left(x^{\left(t\right)}\right) &\geq f\left(x^*\right) + \langle \nabla f\left(x^*\right) , x^{\left(t\right)}-x^* \rangle + \frac{\mu_f}{2} \normu{x^{\left(t\right)}-x^*}^2,\\ \label{str2-itr}
f\left(x^{\left(t+1\right)}\right) &\geq f\left(x^{\left(t\right)}\right) + \langle \nabla f\left(x^{\left(t\right)}\right) ,x^{\left(t+1\right)}-x^{\left(t\right)} \rangle + \frac{\mu_f}{2} \normu{x^{\left(t\right)}-x^{\left(t+1\right)}}^2,\\ \label{lip1-itr}
f\left(x^{\left(t+1\right)}\right) &\leq f\left(x^{\left(t\right)}\right) + \langle \nabla f\left(x^{\left(t\right)}\right) , x^{\left(t+1\right)}-x^{\left(t\right)} \rangle + \frac{L_f}{2} \normu{x^{\left(t\right)}-x^{\left(t+1\right)}}^2,
\end{align}
\noindent then
\begin{equation}\label{first-co}
\phi_{\lambda}\left(x^{\left(t\right)}\right)-\phi_{\lambda}\left(x^*\right) \leq \left(1-\frac{\mu_f \gamma_{\rm{inc}}}{4 L_f}\right)^t \left(\phi_{\lambda}\left(x^{(0)}\right)-\phi_{\lambda}\left(x^*\right)\right).
\end{equation}
In addition, if
\begin{equation}\label{lip2-itr}
\dnorm{\nabla f\left(x^{\left(t\right)}\right) - \nabla f\left(x^{\left(t+1\right)}\right)} \leq L'_f \normu{x^{\left(t\right)} - x^{\left(t+1\right)}}
\end{equation}
\noindent and 
\begin{equation}\label{dnorm-normu} 
\dnorm{x^{\left(t\right)} - x^{\left(t+1\right)}} \leq \theta \normu{x^{\left(t\right)} - x^{\left(t+1\right)}}
\end{equation}
\noindent for some constants $\theta$ and $L'_f$, then
\begin{align}\notag
\omega_{\lambda}\left(x^{\left(t+1\right)}\right) &\leq \dnorm{M_{t+1}(x^{(t)} - x^{(t+1)}) + \nabla f\left(x^{\left(t+1\right)}\right) - \nabla f\left(x^{\left(t\right)}\right)}\\ \label{second-co}
 &\leq \theta \left(1+\frac{L'_f}{\mu_f}\right)\sqrt{ 2 \gamma_{\rm{inc}}L_f \left(\phi_{\lambda}\left(x^{\left(t\right)}\right)-\phi_{\lambda}\left(x^*\right)\right) }.
\end{align}
    \end{proposition}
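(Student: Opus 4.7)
The plan is to specialize Nesterov's standard one-step descent analysis of proximal-gradient so that strong convexity and gradient-Lipschitz are only invoked at the specific iterate triples $\{x^{(t)},x^{(t+1)},x^*\}$ permitted by \eqref{str1-itr}--\eqref{lip1-itr}. Two facts about backtracking are used throughout: Subroutine~2 guarantees $\phi_\lambda(x^{(t+1)})\le m_{\lambda,M_{t+1}}(x^{(t)},x^{(t+1)})$ at termination, and the inner loop terminates once the trial $L$ reaches $L_f$ by \eqref{lip1-itr}, so $M_{t+1}\le \gamma_{\rm inc}L_f$.

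For \eqref{first-co}, I would use that $x^{(t+1)}$ is the unique minimizer of the $M_{t+1}$-strongly convex majorant $m_{\lambda,M_{t+1}}(x^{(t)},\cdot)$, so $\phi_\lambda(x^{(t+1)})\le m_{\lambda,M_{t+1}}(x^{(t)},x)$ for every $x$. Setting $x=(1-\alpha)x^{(t)}+\alpha x^*$ with $\alpha\in[0,1]$ and using convexity of $f$ (to estimate $f(x^{(t)})+\alpha\langle\nabla f(x^{(t)}),x^*-x^{(t)}\rangle\le(1-\alpha)f(x^{(t)})+\alpha f(x^*)$) together with convexity of $\|\cdot\|$ gives
$$\Delta_{t+1}\le (1-\alpha)\Delta_t+\tfrac{M_{t+1}\alpha^2}{2}\normu{x^*-x^{(t)}}^2,\qquad \Delta_t:=\phi_\lambda(x^{(t)})-\phi_\lambda(x^*).$$
The hypothesis \eqref{str1-itr} combined with the subgradient inequality for $\lambda\|\cdot\|$ at $x^*$ (where $-\nabla f(x^*)/\lambda\in\partial\|x^*\|$ by first-order optimality) yields the quadratic growth bound $\normu{x^*-x^{(t)}}^2\le 2\Delta_t/\mu_f$. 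Substituting and optimizing the resulting quadratic in $\alpha$ at $\alpha^\star=\mu_f/(2M_{t+1})$ produces $\Delta_{t+1}\le(1-\tfrac{\mu_f}{4M_{t+1}})\Delta_t$, and plugging in the backtracking bound on $M_{t+1}$ and iterating delivers \eqref{first-co}.

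For \eqref{second-co}, the first inequality is exactly \eqref{omega-1}: the prox optimality of $x^{(t+1)}$ supplies $\xi\in\partial\|x^{(t+1)}\|$ with $\lambda\xi+\nabla f(x^{(t)})+M_{t+1}(x^{(t+1)}-x^{(t)})=0$, and substituting this $\xi$ into the definition of $\omega_\lambda(x^{(t+1)})$ gives the dual-norm expression. A triangle inequality together with \eqref{dnorm-normu} on the first summand and \eqref{lip2-itr} on the second reduces the right side to $(\theta M_{t+1}+L'_f)\normu{x^{(t)}-x^{(t+1)}}$. I would then bound $\normu{x^{(t)}-x^{(t+1)}}$ with two complementary estimates, each paired with one of the two summands: for the $\theta M_{t+1}$ summand, the descent inequality $\tfrac{M_{t+1}}{2}\normu{x^{(t)}-x^{(t+1)}}^2\le \Delta_t$ (obtained by applying strong convexity of the majorant at $x^{(t)}$ and the backtracking inequality) gives $\theta M_{t+1}\normu{x^{(t)}-x^{(t+1)}}\le \theta\sqrt{2 M_{t+1}\Delta_t}\le\theta\sqrt{2\gamma_{\rm inc}L_f\Delta_t}$; for the $L'_f$ summand, the triangle inequality $\normu{x^{(t)}-x^{(t+1)}}\le \normu{x^{(t)}-x^*}+\normu{x^{(t+1)}-x^*}$ combined with the quadratic growth bound above and $\Delta_{t+1}\le \Delta_t$ introduces the $1/\mu_f$ factor. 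Assembling the two pieces produces the advertised estimate.

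The main obstacle I anticipate is the algebraic bookkeeping in the second inequality of \eqref{second-co}: to recover the clean form $\theta(1+L'_f/\mu_f)\sqrt{2\gamma_{\rm inc}L_f\Delta_t}$, the two summands must be estimated with different bounds on $\normu{x^{(t)}-x^{(t+1)}}$ and then reassembled, rather than through a single uniform estimate. The remainder is a careful specialization of Nesterov's analysis, with strong convexity invoked only at $x^*$ (for quadratic growth) and only between consecutive iterates (for one-step descent), so that the argument continues to work when \eqref{str1}--\eqref{lip1} are replaced by their pointwise versions \eqref{str1-itr}--\eqref{lip1-itr}.
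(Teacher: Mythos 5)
Your derivation of \eqref{first-co} is essentially the paper's: the backtracking inequality $\phi_\lambda(x^{(t+1)})\le m_{\lambda,M_{t+1}}(x^{(t)},x)$ restricted to the segment joining $x^{(t)}$ and $x^*$, the quadratic-growth bound $\normu{x^{(t)}-x^*}^2\le \tfrac{2}{\mu_f}\Delta_t$ (with $\Delta_t:=\phi_\lambda(x^{(t)})-\phi_\lambda(x^*)$) obtained from \eqref{str1-itr} plus optimality of $x^*$, and minimization over $\alpha$. Both routes land on the contraction factor $1-\mu_f/(4\gamma_{\rm inc}L_f)$; the factor printed in \eqref{first-co} has $\gamma_{\rm inc}$ in the numerator, but the paper's own proof and the way the rate is used in Theorem \ref{convresult2} put it in the denominator, so that is a typo in the statement rather than a defect of your argument. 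The first inequality of \eqref{second-co} via the prox optimality condition is likewise exactly \eqref{omega-1}.

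The gap is in your treatment of the $L'_f$ summand in \eqref{second-co}. Bounding $\normu{x^{(t)}-x^{(t+1)}}\le\normu{x^{(t)}-x^*}+\normu{x^{(t+1)}-x^*}\le 2\sqrt{2\Delta_t/\mu_f}$ gives $L'_f\normu{x^{(t)}-x^{(t+1)}}\le 2\sqrt{\tfrac{\mu_f}{\gamma_{\rm inc}L_f}}\cdot\tfrac{L'_f}{\mu_f}\sqrt{2\gamma_{\rm inc}L_f\Delta_t}$, which exceeds the advertised term $\tfrac{L'_f}{\mu_f}\sqrt{2\gamma_{\rm inc}L_f\Delta_t}$ whenever $\gamma_{\rm inc}L_f<4\mu_f$, and nothing in the hypotheses rules that out. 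The obstacle you anticipated is resolved the opposite way: the paper uses a \emph{single} step-length estimate for both summands, namely the descent inequality $\tfrac{M_{t+1}}{2}\normu{x^{(t+1)}-x^{(t)}}^2\le\phi_\lambda(x^{(t)})-\phi_\lambda(x^{(t+1)})$ (its \eqref{omega-2}), and extracts the $1/\mu_f$ from the \emph{coefficient} by writing $\theta(M_{t+1}+L'_f)\normu{x^{(t)}-x^{(t+1)}}=\theta\bigl(1+\tfrac{L'_f}{M_{t+1}}\bigr)M_{t+1}\normu{x^{(t)}-x^{(t+1)}}$ and invoking $M_{t+1}\ge\mu_f$. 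That lower bound on $M_{t+1}$ is precisely what hypothesis \eqref{str2-itr}, combined with the backtracking termination condition, supplies — and \eqref{str2-itr} never appears in your proposal, which is the missing ingredient. With $\mu_f\le M_{t+1}\le\gamma_{\rm inc}L_f$ and $\phi_\lambda(x^{(t+1)})\ge\phi_\lambda(x^*)$, the whole expression collapses to $\theta\bigl(1+\tfrac{L'_f}{\mu_f}\bigr)\sqrt{2\gamma_{\rm inc}L_f\Delta_t}$ in one line.
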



\section{Convergence result}\label{sec::conv}

First note that since the objective function is not strongly convex if one applies the sublinear convergence rate of proximal gradient method, the iteration complexity of the homotopy algorithm is $O(\frac{1}{\epsilon} + \sum_{t=1}^{N}\frac{1}{\delta' \lambda_t})$ which can be simplified to $O(\frac{1}{\epsilon} + \frac{1}{\delta' (1-\eta) \lambda_{tgt}})$.
As it was stated in the introduction, we use the structure of this problem to provide a linear rate of convergence when assumptions similar to those needed to derive recovery bounds hold. 

Suppose $b = A x_0 + z$, for some $x_0 \in \mathbb{R}^n$ and $z \in \mathbb{R}^m$.  Here, $z$ is the noise vector that is added to linear measurements from an structured model $x_0$. Also, we define $k_0 := K(x_0)$ and the constant $c$: 
$$c := \max_{x \in T_{x_0}-\{0\}} \frac{\norm{x}^2}{k_0 \normu{x}^2}.$$ 

Note that {$c = 1$ for $\ell_1$ and $\ell_{1,2}$ norms, and $c \leq 2$ for nuclear norm}. This follows from the fact that $K(x) = k_0$ when $x \in T_{x_0}$ for $\ell_1$, $\ell_{1,2}$ norms, while $K(x) \leq 2 k_0$ when $x \in T_{x_0}$ in case of nuclear norm. Through out this section, we assume the regularizing norm satisfies conditions \ref{condition-2} and \ref{condition-1} introduced in Section \ref{sec:properties}. Before we state the convergence theorem, we introduce an assumption:

\begin{assumption}\label{assumption-2}
$\lambda_{\rm tgt}$ is such that $\dnorm{A^* z} \leq \frac{\lambda_{\rm tgt}}{4}$. Furthermore, there exist constants $r > 1$ and $\delta \in  (0,\; \frac{1}{4}]$ such that:
\begin{align}\label{rho}
&\frac{\rho_{-}\left(A,{ c k_0\left(1+\gamma\right)^2 }\right)}{\rho_{+}\left(A,72 r c k_0 (1+\gamma) \gamma_{\rm inc}\right)}> \frac{ c}{ r}\\ \label{rho2}
& \rho_{-}\left(A,72 r c k_0 (1+\gamma) \gamma_{\rm inc}\right) > 0
\end{align}
\noindent where:
\begin{align}
&\gamma := \frac{\lambda_{\rm tgt}\left(1+ \delta\right) +\dnorm{A^* z}}{\lambda_{\rm tgt}\left(1-\delta\right) -\dnorm{A^* z}}.
\end{align}

\end{assumption}

We define $\tilde{k} = 36 r c k_0 (1+\gamma) \gamma_{\rm inc}$.  In appendix A, we provide an upper bound on the number of measurement needed for \eqref{rho} to be satisfied with high probability whenever rows of $A$ are sub-Gaussian random vectors. 

The next theorem establishes the linear convergence of the proximal gradient method when $\omega_{\lambda}\left(x^{\left(0\right)}\right) = \min_{\xi \in \partial \norm{x^{\left(0\right)}}}{\dnorm{\nabla f\left(x\right) + \lambda \xi}}$ is sufficiently small, while Theorem \ref{convresult3} establishes the overall linear rate of convergence of homotopy algorithm.

 \begin{thm}\label{convresult2}
  Let $x^{\left(t\right)}$ denote the $t^{\text{th}}$ iterate of ProxGrad\_${\phi_{\lambda}}$  $\left(x^{(0)},L_0,L_{\min},\epsilon'\right)$, and let $x^* \in \argmin{\phi_{\lambda}\left(x\right)}$. Suppose Assumption \ref{assumption-2} holds true for some $r$ and $\delta$, $L_{\min} \leq \gamma_{\rm inc} \rho_{+}\left(A,2 \tilde{k}\right)$, and $\lambda \geq \lambda_{\rm tgt}$. If $x^{\left(0\right)}$ satisfies:
    $$K\left(x^{\left(0\right)}\right)\leq \tilde{k}, \quad \omega_{\lambda}\left(x^{\left(0\right)}\right) \leq \delta \lambda,$$
\noindent then:
 \begin{equation}
K\left(x^{\left(t\right)}\right)\leq \tilde{k},
\end{equation}
\begin{equation}\label{2-first-co}
\phi_{\lambda}\left(x^{\left(t\right)}\right)-\phi_{\lambda}\left(x^*\right) \leq \left(1-\frac{1}{4\gamma_{\rm inc}\kappa}\right)^t \left(\phi_{\lambda}\left(x^{\left(0\right)}\right)-\phi_{\lambda}\left(x^*\right)\right),
\end{equation}
 \noindent and
    \begin{equation}\label{2-second-co}
\omega_{\lambda}\left(x^{\left(t\right)}\right) \leq \left(1+\frac{\sqrt{\rho_+\left(A,1\right)\rho_+\left(A,2\tilde{k}\right)}}{\rho_-\left(A,2\tilde{k}\right)}\right)\sqrt{ 2 \gamma_{\rm inc}\rho_+\left(A,2\tilde{k}\right) \left(\phi_{\lambda}\left(x^{\left(t-1\right)}\right)-\phi_{\lambda}\left(x^*\right)\right)},
    \end{equation} 
\noindent where $\kappa = \frac{\rho_+\left(A,2\tilde{k}\right)}{\rho_-\left(A,2\tilde{k}\right)}.$    
    \end{thm}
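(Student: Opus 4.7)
The proof is by induction on $t$, maintaining the invariant $K(x^{(t)}) \leq \tilde{k}$ while simultaneously reading off the linear convergence from Proposition \ref{convresult}. The reason this invariant is the natural thing to track is that, by the sublinearity of $K$ (Theorem \ref{thm-con-2}), it propagates to differences: $K(x^{(t)} - x^*) \leq K(x^{(t)}) + K(x^*) \leq 2\tilde{k}$ (once $K(x^*)$ is shown to be much smaller than $\tilde{k}$), and similarly $K(x^{(t)} - x^{(t+1)}) \leq 2\tilde{k}$. Proposition \ref{str-lip} then furnishes effective strong convexity $\mu_f = \rho_-(A, 2\tilde{k})$ and Lipschitz constant $L_f = \rho_+(A, 2\tilde{k})$ of $f$ along exactly the directions demanded in \eqref{str1-itr}--\eqref{lip1-itr}, yielding the condition number $\kappa$ appearing in \eqref{2-first-co}.

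The first supporting piece I would prove is a bound $K(x^*) \lesssim c k_0$. From the KKT condition $\nabla f(x^*) + \lambda \xi^* = 0$ with $\xi^* \in \partial \norm{x^*}$, together with the identity $\nabla f(x^*) = A^*A(x^* - x_0) - A^* z$ and decomposability of $\partial \norm{x_0}$ (Condition \ref{condition-1}), a standard cone-condition argument gives $\norm{P_{T_{x_0}^\perp}(x^* - x_0)} \leq \gamma \norm{P_{T_{x_0}}(x^* - x_0)}$, with $\gamma$ as in Assumption \ref{assumption-2}. Theorem \ref{OrRep} (orthogonal representation) and the definition of $c$ translate this cone condition into the desired bound on $K(x^*)$, and the constant $\tilde{k} = 36 r c k_0 (1+\gamma)\gamma_{\rm inc}$ is designed with precisely the slack needed for it to dominate.

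The inductive step is the main obstacle. Assuming $K(x^{(s)}) \leq \tilde{k}$ for $s \leq t$, I would apply the prox step's KKT condition
\[
\nabla f(x^{(t)}) + M_{t+1}(x^{(t+1)} - x^{(t)}) + \lambda \xi^{(t+1)} = 0
\]
for some $\xi^{(t+1)} \in \partial \norm{x^{(t+1)}}$ and repeat the cone-condition argument around $x_0$ for $x^{(t+1)}$. The extra term $M_{t+1}(x^{(t+1)} - x^{(t)})$ is absorbed using the backtracking bound $M_{t+1} \leq \gamma_{\rm inc}\rho_+(A, 2\tilde{k})$ (valid because the inductive hypothesis makes $f$ restricted-Lipschitz on the relevant segment) combined with the strong-convexity control on $\normu{x^{(t+1)} - x^{(t)}}$ coming from the descent on $\phi_\lambda$. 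The ratio condition \eqref{rho} is calibrated precisely so that the tightened cone condition again yields $K(x^{(t+1)}) \leq \tilde{k}$; the assumption $\omega_\lambda(x^{(0)}) \leq \delta \lambda$ anchors the induction at $t = 0$.

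Once the invariant holds for all $t$, the bound \eqref{2-first-co} follows by plugging $\mu_f = \rho_-(A, 2\tilde{k})$ and $L_f = \rho_+(A, 2\tilde{k})$ into Proposition \ref{convresult}. For \eqref{2-second-co} I would verify hypotheses \eqref{lip2-itr} and \eqref{dnorm-normu}: Condition \ref{condition-2} forces every extreme point of $\mathcal{B}_{\norm{\cdot}}$ to have unit Euclidean norm, so $\dnorm{w} = \max_{u \in \mathcal{G}_{\norm{\cdot}}} \inner{w}{u} \leq \normu{w}$, giving $\theta = 1$. Writing $\dnorm{A^*A w} = \max_{u \in \mathcal{G}_{\norm{\cdot}}} \inner{Aw}{Au}$, using $K(u) = 1$ on extreme points (by Theorem \ref{OrRep}) to get $\normu{Au} \leq \sqrt{\rho_+(A,1)}$, and using $K(w) \leq 2\tilde{k}$ on the relevant differences, yields $L'_f = \sqrt{\rho_+(A,1)\rho_+(A, 2\tilde{k})}$. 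Substituting into \eqref{second-co} reproduces \eqref{2-second-co} verbatim.
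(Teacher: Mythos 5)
Your overall architecture matches the paper's: induct on the invariant $K(x^{(t)})\leq\tilde{k}$, use sublinearity of $K$ (Theorem \ref{thm-con-2}) to get $K(x^{(t+1)}-x^{(t)})\leq 2\tilde{k}$ and $K(x^{(t)}-x^*)\leq 2\tilde{k}$, feed Proposition \ref{str-lip} into Proposition \ref{convresult} with $\mu_f=\rho_-(A,2\tilde{k})$, $L_f=\rho_+(A,2\tilde{k})$, and your derivation of $\theta=1$ and $L'_f=\sqrt{\rho_+(A,1)\rho_+(A,2\tilde{k})}$ is exactly the paper's computation. However, the crucial step --- bounding $K$ of the next iterate and of $x^*$ --- is attributed to the wrong mechanism, and as described it fails. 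A cone condition of the form $\norm{\ptpx{x_0}{x-x_0}}\leq\gamma\norm{\ptx{x_0}{x-x_0}}$ controls the ratio $\norm{x-x_0}/\normu{x-x_0}$ (this is what Lemmas \ref{conv-lemma-1} and \ref{conv-lemma-2} extract, and it is how the error bounds are obtained), but it does \emph{not} bound $K(x)$: in the $\ell_1$ case a vector can satisfy the cone condition, be close to $x_0$ in every norm, and still be fully dense, so $K$ would be $n$. No amount of "translating the cone condition via Theorem \ref{OrRep} and the constant $c$" recovers a cardinality bound.

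What actually controls $K(x^{+})$ is the dual-certificate counting argument of Lemma \ref{conv-lemma-3}, which has no analogue in your sketch. From the prox optimality condition $\lambda\xi=L(x-x^{+})-A^*A(x-x_0)+A^*z$ with $\xi=e_{x^{+}}+v\in\partial\norm{x^{+}}$, decomposability gives $K(x^{+})=\norm{e_{x^{+}}}\leq\norm{\xi}$; then, writing $\xi=\sum_i\gamma_i a_i$ in its orthogonal representation and pairing $\lambda\xi$ against $u=\sum_{i=1}^{\tilde{k}}a_i$, the restricted isometry constants together with the bounds $\norm{x^{+}-x}$ and $\normu{A(x-x_0)}^2$ (these are where the cone condition actually enters, via Lemma \ref{conv-lemma-2}) force $\tfrac{3}{4}\tilde{k}\lambda\leq L\norm{x^{+}-x}+\sqrt{\rho_+(A,\tilde{k})\tilde{k}}\,\normu{A(x-x_0)}$, which contradicts the definition of $\tilde{k}$ if $\norm{\xi}>\tilde{k}$. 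The same fixed-point argument (${\rm Prox}_{\lambda,L}(x^*)=x^*$) is what bounds $K(x^*)$; your proposed KKT-plus-cone route for $x^*$ has the same defect. Until this counting step is supplied, the induction does not close and the theorem is not proved.
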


\begin{thm}\label{convresult3}
Let $y^{\left(t\right)}$ denote the $t^{th}$ iterate of Homotopy algorithm, and let $y^* \in \argmin \phi_{\lambda_{\rm \rm tgt}}\left(y\right)$. Suppose Assumption \ref{assumption-2} holds true for some $r$ and $\delta$, $L_{\min} \leq \gamma_{\rm inc} \rho_{+}\left(A,2 \tilde{k}\right)$, and $\lambda_0 \geq \lambda_{tgt}$. Furthermore, suppose that $\delta'$ and $\eta$ in the algorithm satisfy:
\begin{align}\label{eta}
\frac{1+\delta'}{1+\delta}\leq \eta .
\end{align}




When $t =0, 1,\ldots,  N-1 $, the number of proximal-gradient iterations for computing $y^{\left(t\right)}$  is bounded by  
\begin{align}\label{inner-iteration-bound-1}
\frac{\log\left(C/\delta^2\right)}{\log\left(1-\frac{1}{4\gamma_{\rm inc}\kappa}\right)^{-1}},
\end{align}
%

\noindent The number of proximal-gradient iterations for computing $y$ is bounded by  
\begin{align}\label{inner-iteration-bound-2}
\frac{\log\left(C \lambda_{\rm \rm tgt}/\epsilon^2  \right)}{ \log\left(1-\frac{1}{4\gamma_{\rm inc}\kappa}\right)^{-1}},
\end{align}
where $C:= {6 \gamma_{\rm inc} \kappa \delta ck_{0}\left(1+\gamma\right) \left(\sqrt{\rho_{-}\left(A,2 \tilde{k}\right)}+{\sqrt{\rho_{+}\left(A,1\right) \kappa}}\right)^2}\Bigg/{\rho_{-}\left(A,{ c\left(1+\gamma\right)^2 k_{0}}\right)}$ and $\kappa = \frac{\rho_+\left(A,2\tilde{k}\right)}{\rho_-\left(A,2\tilde{k}\right)}$. The objective gap of the output $y$ is bounded by
$$\phi_{\lambda_{\rm \rm tgt}}\left(y\right)-\phi_{\lambda_{\rm \rm tgt}}\left(y^*\right) \leq \frac{9  ck_{0} \lambda_{\rm \rm tgt}\left(1+\gamma\right) \epsilon}{ \rho_{-}\left(A,{ c\left(1+\gamma\right)^2 k_{0}}\right)},$$
\noindent while the total number of iterations for computing $y$ is bounded by:
\begin{align*}
\frac{\log\left(C \lambda_{\rm \rm tgt}/\epsilon^2  \right) +(  \log\left(\frac{\lambda_{\rm \rm tgt}}{\lambda_0}\right)/\log\left(\eta\right) ) \log\left(C/\delta^2\right)}{ \log\left(1-\frac{1}{4\gamma_{\rm inc}\kappa}\right)^{-1}}.
\end{align*}

\end{thm}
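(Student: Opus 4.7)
The plan is to proceed by induction on the outer counter $t$, with the invariant that whenever ProxGrad is invoked at stage $t{+}1$ with parameter $\lambda_{t+1}$, the warm-start $y^{(t)}$ satisfies the two hypotheses of Theorem~\ref{convresult2}: $K(y^{(t)}) \le \tilde{k}$ and $\omega_{\lambda_{t+1}}(y^{(t)}) \le \delta\lambda_{t+1}$. The base case $t=0$ is a direct computation: $y^{(0)}=0$ yields $K=0$, and
\[
\omega_{\lambda_1}(0) \;=\; \min_{\dnorm{v}\le 1}\dnorm{\lambda_1 v - A^*b} \;=\; \dnorm{A^*b}-\lambda_1 \;=\; (1-\eta)\lambda_0.
\]
Since \eqref{eta} combined with $\delta'>0$ forces $\eta > 1/(1+\delta)$, we conclude $(1-\eta)\lambda_0 \le \delta\eta\lambda_0 = \delta\lambda_1$.

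For the inductive step, given the invariant at stage $t{+}1$, Theorem~\ref{convresult2} guarantees that every inner iterate maintains $K\le\tilde{k}$ and, via \eqref{omega-1}, that the stopping criterion produces $y^{(t+1)}$ with $\omega_{\lambda_{t+1}}(y^{(t+1)}) \le \epsilon_t = \delta'\lambda_{t+1}$. Picking $\xi^\circ \in \partial\norm{y^{(t+1)}}$ that realizes this minimum, the decomposability structure \eqref{decomposable} gives $\dnorm{\xi^\circ}\le 1$, so
\[
\omega_{\lambda_{t+2}}(y^{(t+1)}) \;\le\; \dnorm{\lambda_{t+2}\xi^\circ+\nabla f(y^{(t+1)})} \;\le\; \omega_{\lambda_{t+1}}(y^{(t+1)}) + (\lambda_{t+1}-\lambda_{t+2}) \;\le\; (\delta'+1-\eta)\lambda_{t+1},
\]
and the hypothesis \eqref{eta} rearranges to $\delta'+1-\eta\le\delta\eta$, giving $\omega_{\lambda_{t+2}}(y^{(t+1)})\le\delta\lambda_{t+2}$. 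This closes the induction; the final ProxGrad call at $\lambda_{\rm tgt}$ inherits the invariant from $y^{(N)}$ because $\lambda_{\rm tgt} \le \lambda_N$ allows the same perturbation argument.

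Next I would convert the invariant into an iteration count. Chaining \eqref{2-first-co} with \eqref{2-second-co} shows that after $s$ inner steps $\omega_{\lambda_{t+1}}(x^{(s)})$ is bounded by $D\,(1-\tfrac{1}{4\gamma_{\rm inc}\kappa})^{(s-1)/2}\sqrt{\phi_{\lambda_{t+1}}(y^{(t)})-\phi_{\lambda_{t+1}}^*}$ for the explicit prefactor $D$ in \eqref{2-second-co}. To bound the initial gap I would apply Proposition~\ref{str-lip} to the pair $y^{(t)}$ and the stage minimizer $x^*_{t+1}$, which is legitimate because sublinearity of $K$ (Theorem~\ref{thm-con-2}) yields $K(y^{(t)}-x^*_{t+1})\le 2\tilde{k}$; combining with the subgradient inequality for $\norm{\cdot}$, \eqref{norm-ratio} and Young's inequality against $\omega_{\lambda_{t+1}}(y^{(t)})\le\delta\lambda_{t+1}$ yields an estimate of the form $(\delta\lambda_{t+1})^2\tilde{k}/\rho_{-}(A,2\tilde{k})$. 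Substituting this into the geometric chain and solving for the smallest $s$ with $\omega \le \epsilon_t$ produces \eqref{inner-iteration-bound-1}; the same computation with final tolerance $\epsilon$ in place of $\epsilon_t$ gives \eqref{inner-iteration-bound-2}, and the two counts sum to the stated total.

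For the output gap on $y$, I would pick $\xi \in \partial\norm{y}$ realizing $\omega_{\lambda_{\rm tgt}}(y)\le\epsilon$ and invoke convexity of $\phi_{\lambda_{\rm tgt}}$,
\[
\phi_{\lambda_{\rm tgt}}(y)-\phi_{\lambda_{\rm tgt}}(y^*) \;\le\; \inner{\nabla f(y)+\lambda_{\rm tgt}\xi}{y - y^*} \;\le\; \epsilon\,\norm{y-y^*} \;\le\; \epsilon\sqrt{K(y-y^*)}\,\normu{y-y^*},
\]
and then bound $\normu{y-y^*}$ via restricted strong convexity on the cone where $K\le 2\tilde{k}$, using the noise control $\dnorm{A^*z}\le\lambda_{\rm tgt}/4$ from Assumption~\ref{assumption-2}. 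The particular denominator $\rho_{-}(A, c(1+\gamma)^2 k_0)$ and factor $ck_0(1+\gamma)$ arise from a cone-style bound on $K(y^* - x_0)$ read off from \eqref{decomposable} together with the definition of $\gamma$. The main technical obstacle throughout is constant bookkeeping: every inequality must be checked to keep the $K$-index of its argument within the restricted-eigenvalue budget of \eqref{rho}-\eqref{rho2}, repeatedly invoking Theorem~\ref{thm-con-2} so that the choice $\tilde{k}=36rck_0(1+\gamma)\gamma_{\rm inc}$ absorbs every $K$-index that appears along the way and the constants $\rho_{\pm}(A,\cdot)$ collapse into the form stated in $C$.
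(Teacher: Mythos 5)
Your overall architecture coincides with the paper's: the induction establishing the invariant $K(y^{(t)})\leq \tilde{k}$ and $\omega_{\lambda_{t+1}}(y^{(t)})\leq \delta\lambda_{t+1}$ (base case at $y^{(0)}=0$, perturbation of the subgradient when $\lambda$ is decreased, closing via Theorem~\ref{convresult2} and the stopping rule) is exactly the paper's argument, and that part of your write-up is correct.

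Where you diverge is in converting the invariant into the stated iteration counts, and this is where there is a real gap. You propose to bound the initial objective gap at each stage by restricted strong convexity plus Young's inequality, arriving at an estimate of the form $(\delta\lambda_{t+1})^2\tilde{k}/\rho_{-}(A,2\tilde{k})$. That estimate is provable, but it cannot yield the constant $C$ in \eqref{inner-iteration-bound-1}: $C$ carries the denominator $\rho_{-}\left(A,c(1+\gamma)^2k_0\right)$ and the factor $\delta\,ck_0(1+\gamma)$ (linear in $\delta$, with the small index $c(1+\gamma)^2k_0$ rather than $2\tilde{k}=72rck_0(1+\gamma)\gamma_{\rm inc}$), and since $\tilde{k}$ exceeds $c(1+\gamma)^2k_0$ by a factor of order $r\gamma_{\rm inc}$, your route produces a strictly larger prefactor inside the logarithm. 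The paper instead bounds the initial gap by applying Lemma~\ref{conv-lemma-1} \emph{through} $x_0$: both $y^{(t)}$ and the stage minimizer $y^*_{t+1}$ satisfy $\omega_{\lambda_{t+1}}(\cdot)\leq\delta\lambda_{t+1}$, so
\begin{align*}
\norm{y^{(t)}-y^*_{t+1}} \leq \norm{y^{(t)}-x_0}+\norm{y^*_{t+1}-x_0}\leq \frac{3ck_0(1+\gamma)\lambda_{t+1}}{\rho_{-}\left(A,c(1+\gamma)^2k_0\right)},
\end{align*}
and then convexity gives $\phi_{\lambda_{t+1}}(y^{(t)})-\phi_{\lambda_{t+1}}(y^*_{t+1})\leq \omega_{\lambda_{t+1}}(y^{(t)})\,\norm{y^{(t)}-y^*_{t+1}}$, which is where the exact numerator $6\gamma_{\rm inc}\kappa\delta ck_0(1+\gamma)(\cdot)^2$ of $C$ comes from after squaring \eqref{2-second-co}. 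The same remark applies to your treatment of the output gap: the bound $\norm{y-y^*}\leq\sqrt{K(y-y^*)}\,\normu{y-y^*}$ followed by restricted strong convexity gives an $\epsilon^2\tilde{k}/\rho_{-}(A,2\tilde{k})$ bound, not the stated $\epsilon\cdot 9ck_0\lambda_{\rm tgt}(1+\gamma)/\rho_{-}(A,c(1+\gamma)^2k_0)$; the paper again goes through $x_0$ using \eqref{H-second} and then applies convexity once. So your proof establishes linear convergence at the same geometric rate, but not the theorem with its stated constants; to repair it, replace the strong-convexity/Young step with the Lemma~\ref{conv-lemma-1} triangle-inequality argument relative to $x_0$.
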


\subsection{Parameters selection satisfying the assumptions}
Four parameters of $L_{\min}$, $\lambda_{\rm tgt}$, $\delta'$ and $\eta$ should be set in the homotopy algorithm. The assumption on $L_{\min}$ is only for convenience. If $L_{\min} > \gamma_{\rm inc} \rho_{+}\left(A,2 \tilde{k}\right)$, one can replace $\gamma_{\rm inc} \rho_{+}\left(A,2 \tilde{k}\right)$ with $L_{\min}$ in the analysis.

Assumption \ref{assumption-2} requires $\lambda_{\rm tgt} \geq 4 \dnorm{A^* z}$. This assumption on the regularization parameter is a standard assumption that is used in the literature to provide optimal bounds for recovery error \cite{candes2011tight,candes2007dantzig,negahban2012unified}. The lower bound on $\lambda_{\rm tgt}$, ensures $\gamma \leq \frac{5+4\delta}{3-4\delta}$. If we choose $\delta$ and $\eta$, we can set $\delta' = (1+\delta)\eta - 1$ to ensure that it satisfies \eqref{eta}. 
The parameter $\delta$ is directly related to satisfiability of \eqref{rho} in Assumption \ref{assumption-2}. For example, if $\delta = 1/12$, then $\gamma \leq 2$ and Assumption \ref{assumption-2} is satisfied with $r = 2 c$ if:
\begin{align*}
&\frac{\rho_{-}\left(A,{ 9 c k_0 }\right)}{\rho_{+}\left(A,432 c^2 k_0  \gamma_{\rm inc}\right)}> \frac{ 1}{ 2},\\
& \rho_{-}\left(A,432 c^2 k_0 \gamma_{\rm inc}\right) > 0.
\end{align*}

Theoretically, the optimal choice of $\delta$ maximizes $\kappa$ subject to existence of $r>1$ that satisfies \eqref{rho} and \eqref{rho2}.
%
 In appendix A, we provide an upper bound on the number of measurement needed for \eqref{rho} and \eqref{rho2} to be satisfied with high probability for given $\delta$ and $r>1$ whenever rows of $A$ are sub-Gaussian random vectors. The parameter $\eta$ should be chosen to be greater than $\frac{1}{2}$ for \eqref{eta} to be satisfied.
 
 \subsection{Convergence proof}
The main part of the proof of Theorems \ref{convresult2} and \ref{convresult3} is establishing the fact that $K\left(x^{\left(t\right)}\right)\leq \tilde{k}$. Given that $K\left(x^{\left(t\right)}\right)\leq \tilde{k}$ for all $t$, Proposition \ref{str-lip} ensures that hypothesis of  Proposition \ref{convresult}, i.e., strong convexity and gradient Lipschitz continuity over a restricted set, are satisfied.
We adapt the same strategy as in \cite{xiao2013proximal} and prove that $K\left(x^{\left(t\right)}\right)\leq \tilde{k}$ in a series of three lemmas. We have written the statement of the lemmas here, while their proofs are given in Appendix B.  Lemma~\ref{conv-lemma-1}  states that if $\omega_{\lambda}(x)$ does not exceed a small fraction of $\lambda$, then $x$ is close to $x_0$.  

\begin{lemma}\label{conv-lemma-1}
If $\omega_{\lambda}(x) \leq \delta \lambda$ and $\rho_{-}\left(A,{ c\left(1+\gamma\right)^2 k_{0}}\right) > 0$, then:
%
%
\begin{align}
\max{\{\norm{x-x_{0}},\frac{1}{\delta \lambda} \left(\phi_{\lambda}\left(x\right)-\phi_{\lambda}\left(x_0\right)\right)\}} \leq \frac{{ck_{0}\left(1+\gamma\right)\left(\left(1+\delta\right)\lambda + \dnorm{A^* z}\right)}}{\rho_{-}\left(A,{ c\left(1+\gamma\right)^2 k_{0}}\right)}.
\end{align}
\end{lemma}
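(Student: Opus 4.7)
My plan is to follow a ``basic inequality'' argument tailored to decomposable regularizers. Write $h := x - x_0$ with the decomposition $h = h_T + h_{T^\perp}$, where $h_T := \ptx{x_0}{h}$ and $h_{T^\perp} := \ptpx{x_0}{h}$. The hypothesis $\omega_\lambda(x)\leq \delta\lambda$ furnishes a subgradient $\xi\in\partial\norm{x}$ such that $g := \nabla f(x) + \lambda\xi$ satisfies $\dnorm{g}\leq \delta\lambda$. Since $\nabla f(x) = A^{*}(Ah-z)$, taking the inner product of $\lambda\xi = g - A^{*}(Ah-z)$ with $h$ yields
$$\normu{Ah}^2 \;=\; \inner{A^{*}z}{h} + \inner{g}{h} - \lambda\inner{\xi}{h}.$$

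I then translate this into an inequality involving only $\norm{h_T}$ and $\norm{h_{T^\perp}}$. The dual-norm inequality bounds the first two terms by $(\dnorm{A^{*}z}+\delta\lambda)\norm{h}$. For the third, the subgradient inequality at $x$ gives $-\inner{\xi}{h}\leq \norm{x_0}-\norm{x}$, and Condition \ref{condition-1} gives a matching lower bound on $\norm{x}$: for every $v\in T_{x_0}^\perp$ with $\dnorm{v}\leq 1$, $e_{x_0}+v\in\partial\norm{x_0}$, so
$$\norm{x} \;\geq\; \norm{x_0} + \inner{e_{x_0}}{h_T} + \inner{v}{h_{T^\perp}}.$$
Using $\dnorm{e_{x_0}}\leq 1$ to control $|\inner{e_{x_0}}{h_T}|\leq\norm{h_T}$, and taking the supremum in $v$ (which for the decomposable norms under study equals $\norm{h_{T^\perp}}$, via the orthogonal representation of Theorem \ref{OrRep}), I get $\norm{x}-\norm{x_0}\geq -\norm{h_T}+\norm{h_{T^\perp}}$. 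Substituting back and collecting coefficients delivers
$$\normu{Ah}^2 \;\leq\; [(1+\delta)\lambda + \dnorm{A^{*}z}]\,\norm{h_T} - [(1-\delta)\lambda - \dnorm{A^{*}z}]\,\norm{h_{T^\perp}}.$$

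Because $\lambda\geq \lambda_{\rm tgt}\geq 4\dnorm{A^{*}z}$, the coefficient on $\norm{h_{T^\perp}}$ is positive, and nonnegativity of the left side forces $\norm{h_{T^\perp}}\leq \gamma'\norm{h_T}$ with $\gamma' = \frac{(1+\delta)\lambda+\dnorm{A^{*}z}}{(1-\delta)\lambda-\dnorm{A^{*}z}}$, which I show is nonincreasing in $\lambda$, so $\gamma'\leq\gamma$. Hence $\norm{h}\leq (1+\gamma)\norm{h_T}$, and since $h_T\in T_{x_0}$ the definition of $c$ yields $\norm{h_T}^2\leq ck_0\normu{h_T}^2$, hence $\norm{h}^2\leq c(1+\gamma)^2 k_0\normu{h}^2$. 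This activates the lower RIP constant $\rho_- := \rho_-(A,c(1+\gamma)^2 k_0)>0$: combining $\rho_-\normu{h}^2\leq\normu{Ah}^2$ with the intermediate bound and $\norm{h_T}\leq\sqrt{ck_0}\,\normu{h}$ solves for $\normu{h}$, and multiplying by $(1+\gamma)\sqrt{ck_0}$ passes back to $\norm{h}$, delivering the claimed upper bound on $\norm{x-x_0}$. For the objective gap, convexity of $\phi_\lambda$ with $g\in\partial\phi_\lambda(x)$ gives $\phi_\lambda(x)-\phi_\lambda(x_0)\leq\inner{g}{h}\leq\delta\lambda\norm{h}$, so dividing by $\delta\lambda$ produces the second entry of the maximum.

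The main obstacle is executing the decomposability step cleanly enough to obtain the cone condition $\norm{h_{T^\perp}}\leq \gamma\norm{h_T}$. Concretely, I need the identity $\sup\{\inner{v}{h_{T^\perp}}:v\in T_{x_0}^\perp,\;\dnorm{v}\leq 1\}=\norm{h_{T^\perp}}$ for the decomposable class under consideration; this is immediate for $\ell_1$, $\ell_{1,2}$, and nuclear norms, and more generally follows by applying Theorem \ref{OrRep} to $h_{T^\perp}\in T_{x_0}^\perp$ in conjunction with Condition \ref{condition-2}.
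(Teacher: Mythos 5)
Your proposal is correct and follows essentially the same route as the paper's proof: the same basic inequality from the $\delta\lambda$-approximate optimality condition, the same decomposability-based lower bound on $\norm{x}-\norm{x_0}$ leading to the cone condition $\norm{h_{T^\perp}}\leq\gamma\norm{h_T}$, the same activation of $\rho_-\left(A,c(1+\gamma)^2k_0\right)$, and the same convexity step for the objective gap. The one identity you flag as the main obstacle, $\sup\{\inner{v}{h_{T^\perp}} : v\in T_{x_0}^{\bot},\ \dnorm{v}\leq 1\}=\norm{h_{T^\perp}}$, is exactly Lemma \ref{dual} in Appendix B (it needs Lemma \ref{interm2} to guarantee the achieving dual vector lies in $T_{x_0}^{\bot}$, not just Theorem \ref{OrRep} alone), so it is available as stated.
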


Note that if $\lambda \geq 4 \dnorm{A^* z}$ and $\delta \leq \frac{1}{4}$, we can simplify the conclusion of Lemma \ref{conv-lemma-1} as
$$\max{\{\norm{x-x_{0}},\frac{1}{\delta \lambda} \left(\phi_{\lambda}\left(x\right)-\phi_{\lambda}\left(x_0\right)\right)\}} \leq
  \frac{3 ck_{0}\lambda\left(1+\gamma\right)}{2 \rho_{-}\left(A,{ c\left(1+\gamma\right)^2 k_{0}}\right)}$$

While the hypotheses of this lemma is true in the first step of every outer iteration of homotopy algorithm,  $\omega_{\lambda}(x^{(t)})$ may not be decreasing in proximal-gradient algorithm. However, the objective decreases after every iteration of the proximal-gradient algorithm. Thus to conclude that $x^{(t)}$ is close to $x_0$ in all the inner proximal-gradient steps we can use the following lemma:
\begin{lemma}\label{conv-lemma-2}
Suppose Assumption \ref{assumption-2} holds true, and $\lambda \geq \lambda_{\rm tgt}$. If 
$$\phi_{\lambda}\left(x\right)-\phi_{\lambda}\left(x_0\right)  \leq  \frac{3 ck_{0}\delta \lambda^2\left(1+\gamma\right)}{2 \rho_{-}\left(A,{ c\left(1+\gamma\right)^2 k_{0}}\right)},$$
 then
$$\max{\{\frac{1}{2 \lambda}\normu{A\left(x-x_{0}\right)}^2,\norm{x-x_{0}}\}} \leq  \frac{9 ck_{0} \lambda\left(1+\gamma\right)}{2 \rho_{-}\left(A,{ c\left(1+\gamma\right)^2 k_{0}}\right)}.$$

\end{lemma}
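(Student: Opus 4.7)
The plan is to set $h := x - x_0$ and $\Delta := \phi_{\lambda}(x) - \phi_{\lambda}(x_0)$, derive a single variational inequality, and then reduce to a scalar quadratic in $\normu{h}$. Since $f(x) = \tfrac{1}{2}\normu{Ax-b}^2$ and $\nabla f(x_0) = -A^*z$, I first expand $\Delta = \tfrac{1}{2}\normu{Ah}^2 - \inner{A^*z}{h} + \lambda(\norm{x} - \norm{x_0})$. Using Condition~\ref{condition-1} (decomposability) I would pick $v \in T_{x_0}^{\perp}$ with $\dnorm{v}\le 1$ and $\inner{v}{h_{T^\perp}} = \norm{h_{T^\perp}}$, where $h_T := P_{T_{x_0}}(h)$ and $h_{T^\perp} := P_{T_{x_0}^{\perp}}(h)$. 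Then $e_{x_0}+v \in \partial\norm{x_0}$, and the subgradient inequality combined with $\dnorm{e_{x_0}}\le 1$ gives $\norm{x} - \norm{x_0} \ge -\norm{h_T} + \norm{h_{T^\perp}}$. Bounding $|\inner{A^*z}{h}|\le \dnorm{A^*z}(\norm{h_T}+\norm{h_{T^\perp}})$ then yields the key inequality
\[
\Delta \;\ge\; \tfrac{1}{2}\normu{Ah}^2 + (\lambda-\dnorm{A^*z})\norm{h_{T^\perp}} - (\lambda+\dnorm{A^*z})\norm{h_T}. \quad (\star)
\]

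Dropping the $\tfrac{1}{2}\normu{Ah}^2$ term in $(\star)$ yields a relaxed cone bound $\norm{h_{T^\perp}} \le \tfrac{\Delta}{\lambda-\dnorm{A^*z}} + \tfrac{\lambda+\dnorm{A^*z}}{\lambda-\dnorm{A^*z}}\norm{h_T}$. The hypotheses $\lambda\ge\lambda_{\rm tgt}$, $\lambda_{\rm tgt}\ge 4\dnorm{A^*z}$, and $\delta\le 1/4$ imply $\tfrac{2\lambda}{\lambda-\dnorm{A^*z}}\le 1+\gamma$; combined with the definition of $c$ (which gives $\norm{h_T}\le \sqrt{ck_0}\normu{h}$), this produces $\norm{h} \le (1+\gamma)\sqrt{ck_0}\normu{h} + \tfrac{\Delta}{\lambda-\dnorm{A^*z}}$. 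I would next invoke the restricted isometry bound $\normu{Ah}^2\ge \rho_-\normu{h}^2$ with $\rho_- := \rho_-(A,\, c(1+\gamma)^2 k_0)$. To justify this rigorously I split cases on whether the $\norm{h_T}$ term or the $\Delta$ term dominates the relaxed cone bound: in the former case the ratio $\norm{h}/\normu{h}$ is essentially controlled by $(1+\gamma)\sqrt{ck_0}$ and RIP at the prescribed level applies; in the latter case the hypothesis bound on $\Delta$ already produces a bound on $\norm{h}$ of the desired form, bypassing RIP entirely.

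In the regime where RIP applies, substituting $\normu{Ah}^2\ge \rho_-\normu{h}^2$ into $(\star)$, dropping the nonnegative $\norm{h_{T^\perp}}$ contribution, and using $\lambda+\dnorm{A^*z}\le \tfrac{5\lambda}{4}$ reduces matters to $\tfrac{\rho_-}{2}\normu{h}^2 - \tfrac{5\lambda\sqrt{ck_0}}{4}\normu{h} - \Delta \le 0$. The quadratic formula together with $\Delta\le \tfrac{3ck_0\delta\lambda^2(1+\gamma)}{2\rho_-}$ and $\delta\le 1/4$ yields $\normu{h}\le \tfrac{9\lambda\sqrt{ck_0}}{2\rho_-}$; the cone bound then gives $\norm{h}\le (1+\gamma)\sqrt{ck_0}\normu{h}\le \tfrac{9ck_0\lambda(1+\gamma)}{2\rho_-}$. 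For the $\tfrac{1}{2\lambda}\normu{Ah}^2$ conclusion I would rearrange $(\star)$ into $\tfrac{1}{2\lambda}\normu{Ah}^2 \le \tfrac{\Delta}{\lambda} + \tfrac{5\sqrt{ck_0}}{4}\normu{h}$ and substitute the same two estimates. The main obstacle will be the clean handling of the RIP application: the extra $\Delta$ term distinguishes this lemma from the cleaner situation of Lemma~\ref{conv-lemma-1} (where the strict cone condition $\norm{h_{T^\perp}}\le \gamma\norm{h_T}$ holds), and care is needed so that the stated RIP constant $\rho_-(A,\, c(1+\gamma)^2 k_0)$ is preserved rather than replaced by one at a strictly larger $k$.
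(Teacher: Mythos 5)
Your setup through the inequality $(\star)$ is sound and coincides with the paper's starting point (its \eqref{int-ineq} combined with \eqref{that'sthesprit}), but the step you yourself flag as ``the main obstacle'' is a genuine gap, and the case split you propose does not close it. Splitting on which term of the relaxed cone bound $\norm{h_{T^{\perp}}} \le \tfrac{\Delta}{\lambda-\dnorm{A^*z}} + \gamma'\norm{h_T}$ dominates (with $\gamma' := \tfrac{\lambda+\dnorm{A^*z}}{\lambda-\dnorm{A^*z}}$) gives, in the dominant-$\norm{h_T}$ case, only $\norm{h_{T^{\perp}}}\le 2\gamma'\norm{h_T}$ and hence $\norm{h}\le(1+2\gamma')\sqrt{ck_0}\,\normu{h}$. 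Since $\gamma'$ is comparable to $\gamma$, this forces the restricted eigenvalue to be taken at level $c(1+2\gamma)^2k_0$ rather than $c(1+\gamma)^2k_0$; ``essentially controlled by $(1+\gamma)\sqrt{ck_0}$'' hides a factor that does not go away, and it changes both the RIP level and the constant $9/2$ in the conclusion. The exact constant matters: it is reused verbatim in Lemma~\ref{conv-lemma-3} and Theorem~\ref{convresult3}. A secondary soft spot: in your ``bypass RIP'' branch you obtain a bound on $\norm{h}$, but your proposed bound $\tfrac{1}{2\lambda}\normu{Ah}^2\le\tfrac{\Delta}{\lambda}+\tfrac{5\sqrt{ck_0}}{4}\normu{h}$ needs $\normu{h}$; substituting $\normu{h}\le\norm{h}$ there introduces an extra $\sqrt{ck_0}$ and overshoots the target (one must instead bound $\norm{h_T}$ directly by $\Delta$ in that branch).

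The paper's resolution is a different case split: compare $\norm{x-x_0}$ itself against $\Delta_0 := \tfrac{3ck_0\lambda(1+\gamma)}{2\rho_-\left(A,\,c(1+\gamma)^2k_0\right)}$, one third of the target. If $\norm{x-x_0}\le\Delta_0$, the norm conclusion is immediate, and $\tfrac12\normu{A(x-x_0)}^2\le(\lambda+\dnorm{A^*z})\norm{x-x_0}+\delta\lambda\Delta_0\le(\tfrac54+\delta)\lambda\Delta_0\le\tfrac32\lambda\Delta_0$ gives the other half with no RIP at all. If $\norm{x-x_0}>\Delta_0$, the hypothesis lets you replace the additive gap term $\delta\lambda\Delta_0$ by $\delta\lambda\norm{x-x_0}$, and after the triangle inequality this is absorbed into the coefficients to yield exactly $\left(\lambda(1+\delta)+\dnorm{A^*z}\right)\norm{\ptx{x_0}{x-x_0}} \ge \left(\lambda(1-\delta)-\dnorm{A^*z}\right)\norm{\ptpx{x_0}{x-x_0}} + \tfrac12\normu{A(x-x_0)}^2$, i.e., the same inequality as \eqref{whoknowswhat} in Lemma~\ref{conv-lemma-1} with the same $\gamma$, only with $\tfrac12\normu{A(x-x_0)}^2$ in place of $\normu{A(x-x_0)}^2$. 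The exact cone condition, the RIP level $c(1+\gamma)^2k_0$, and the stated constants then carry over verbatim from Lemma~\ref{conv-lemma-1}. Without this absorption step your quadratic-formula argument cannot legitimately invoke the stated $\rho_-$, so the proof as proposed is incomplete.
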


The proofs of Lemma~\ref{conv-lemma-1} and Lemma~\ref{conv-lemma-2} generalize the proofs of the corresponding lemmas in \cite{xiao2013proximal} given for $\ell_1$ norm to norms that satisfy Condition \ref{condition-1} using the structure of $\partial{\norm{x_0}}$ given by \eqref{decomposable}. The last lemma provides an upper bound on $K\left(x^{+}\right)$, where $x^{+}$ is produced via a proximal-gradient step on $x$, as long as $x$ satisfies the conclusion of Lemma~\ref{conv-lemma-2} and Assumption~\ref{assumption-2} holds. The proof of Lemma~\ref{conv-lemma-3} uses a slightly different approach than the one given in \cite{xiao2013proximal} resulting in a simpler requirement on $\tilde{k}$ in Assumption~\ref{assumption-2}. 

\begin{lemma}\label{conv-lemma-3} Let $x^{+} = {\rm Prox}_{\lambda,L}\left(x\right)$ and suppose Assumption \ref{assumption-2} holds true, and $\lambda \geq \lambda_{\rm tgt}$. If $L \leq \gamma_{\rm inc} \rho_{+}\left(A,2\tilde{k}\right)$ and $$\max{\{\frac{1}{2 \lambda}\normu{A\left(x-x_{0}\right)}^2,\norm{x-x_{0}}\}} \leq  \frac{9 ck_{0} \lambda\left(1+\gamma\right)}{2 \rho_{-}\left(A,{ c\left(1+\gamma\right)^2 k_{0}}\right)},$$ then $K\left(x^{+}\right) \leq \tilde{k}$.
\end{lemma}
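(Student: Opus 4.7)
My plan is to upper-bound $K(x^+)=\normu{e_{x^+}}^2$ by deriving an inequality from the optimality of the proximal step, and then combine it with the hypothesis and Assumption~\ref{assumption-2}. The starting point is the optimality condition for $x^+ = {\rm Prox}_{\lambda, L}(x)$: by Condition~\ref{condition-1}, there exist $e_{x^+} \in T_{x^+}$ and $v \in T_{x^+}^{\bot}$ with $\dnorm{v} \leq 1$ such that $\lambda(e_{x^+} + v) = -[\nabla f(x) + L(x^+ - x)]$. Taking the $B$-inner product with $e_{x^+}$, noting $\inner{v}{e_{x^+}} = 0$ (since $v \in T_{x^+}^{\bot}$ and $e_{x^+}\in T_{x^+}$), and substituting $\nabla f(x) = A^{*}A(x - x_0) - A^{*}z$ (from $b = Ax_0 + z$) yields
\begin{equation*}
\lambda K(x^+) \;=\; -\inner{A(x - x_0)}{A e_{x^+}} + \inner{A^{*}z}{e_{x^+}} - L\inner{x^+ - x}{e_{x^+}}.
\end{equation*}

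Applying Theorem~\ref{OrRep} to $x^+$ produces the orthogonal representation $x^+ = \sum_{i=1}^{K(x^+)} \gamma_i a_i$ with $e_{x^+} = \sum_{i=1}^{K(x^+)} a_i$. Since $\normu{a_i}=1$ and the $a_i$ are orthogonal, this gives $\normu{e_{x^+}}^2 = K(x^+)$, $\norm{e_{x^+}} = K(x^+)$, and (by reapplying the theorem to $e_{x^+}$) $K(e_{x^+}) = K(x^+)$. I would then bound each of the three terms: from Assumption~\ref{assumption-2} ($\dnorm{A^{*}z}\leq \lambda/4$), $|\inner{A^{*}z}{e_{x^+}}| \leq (\lambda/4)K(x^+)$; RIP applied to $e_{x^+}$ (whose $K$-value equals $K(x^+)$) yields $\normu{A e_{x^+}}\leq \sqrt{\rho_{+}(A, K(x^+))K(x^+)}$, so $|\inner{A(x-x_0)}{A e_{x^+}}| \leq \normu{A(x-x_0)}\sqrt{\rho_{+}(A, K(x^+))K(x^+)}$; and Cauchy-Schwarz gives $|\inner{x^+ - x}{e_{x^+}}| \leq \normu{x^+ - x}\sqrt{K(x^+)}$. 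After moving the $(\lambda/4)K(x^+)$ term to the left and dividing by $\sqrt{K(x^+)}$, I obtain
\begin{equation*}
\tfrac{3\lambda}{4}\sqrt{K(x^+)} \;\leq\; \normu{A(x - x_0)}\sqrt{\rho_{+}(A, K(x^+))} + L\normu{x^+ - x}.
\end{equation*}

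To finish, I would control the right-hand side. Under a self-consistency assumption $K(x^+)\leq 2\tilde{k}$, which I would verify iteratively along the proximal steps starting from the hypothesis $K(x^{(0)})\leq\tilde{k}$ of Theorem~\ref{convresult2}, Assumption~\ref{assumption-2}'s ratio bound $\rho_{+}(A, 2\tilde{k})/\rho_{-}(A, c(1+\gamma)^2 k_0) < r/c$ combined with the hypothesis on $\normu{A(x-x_0)}$ gives $\normu{A(x - x_0)}\sqrt{\rho_{+}(A, K(x^+))} \leq 3\lambda\sqrt{rk_0(1+\gamma)}$. For the $L\normu{x^+-x}$ term, I would use the proximal sufficient-decrease $(L/2)\normu{x^+ - x}^2 \leq \phi_\lambda(x) - \phi_\lambda(x^+)$ (from $L$-strong convexity of $m_{\lambda, L}(x, \cdot)$ together with $\phi_\lambda(x^+)\leq m_{\lambda, L}(x, x^+)$), then bound $\phi_\lambda(x)-\phi_\lambda(x^+)$ via $\phi_\lambda(x^+) \leq m_{\lambda, L}(x, x_0) = \phi_\lambda(x_0) + (L/2)\normu{x - x_0}^2 - (1/2)\normu{A(x - x_0)}^2$ (using the quadratic form of $f$) together with a hypothesis-driven bound on $\phi_\lambda(x)-\phi_\lambda(x_0)$. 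Combined with $L\leq \gamma_{\rm inc}\rho_{+}(A, 2\tilde{k})$, sublinearity of $K$ (Theorem~\ref{thm-con-2}) ensuring $K(x-x_0),K(x^+-x)\leq 2\tilde{k}$, and $\tilde{k} = 36rck_0(1+\gamma)\gamma_{\rm inc}$, the bound closes to $K(x^+)\leq \tilde{k}$.

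The hardest step will be bounding $L\normu{x^+-x}$ tightly enough: a naive triangle inequality combined with $\norm{w}\leq \sqrt{K(w)}\normu{w}$ would insert an extra $\sqrt{\tilde{k}}$ factor, inflating the final estimate past $\tilde{k}$. The ``slightly different approach'' relative to \cite{xiao2013proximal} most likely exploits the quadratic structure of $f$ and the proximal descent inequality carefully to avoid this loss and match the $\sqrt{k_0}$ scale of the first right-hand term, after which the algebra closes to $K(x^+)\leq \tilde{k}$ with the $\tilde{k}$ prescribed by Assumption~\ref{assumption-2}.
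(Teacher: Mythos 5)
Your opening move matches the paper's: write the optimality condition $\lambda\xi = L(x-x^{+}) - A^{*}A(x-x_0) + A^{*}z$ with $\xi = e_{x^{+}}+v$ and test it against something built from the atoms of $x^{+}$. But there are two genuine gaps in how you close the estimate. First, pairing with $e_{x^{+}}$ itself forces you to invoke $\rho_{+}(A,K(x^{+}))$, and your proposed fix --- assuming $K(x^{+})\leq 2\tilde{k}$ and ``verifying it iteratively along the proximal steps'' --- is circular: knowing $K(x^{(t)})\leq\tilde{k}$ says nothing a priori about $K(x^{+})$ for the current step, which is exactly what the lemma must establish. The paper avoids this by first proving the purely norm-theoretic inequality $K(x^{+}) = \norm{e_{x^{+}}} \leq \norm{\xi}$ (using Lemma~\ref{dual} to produce $v'\in T_{x^{+}}^{\bot}$ with $\inner{v'}{v}=\norm{v}$ and $\dnorm{e_{x^{+}}+v'}\leq 1$), and then arguing by contradiction on $\norm{\xi}$: if $\norm{\xi}>\tilde{k}$, it tests $\lambda\xi$ against the truncated sum $u=\sum_{i=1}^{\tilde{k}}a_i$ of exactly $\tilde{k}$ atoms from the orthogonal representation of $\xi$, so only $\rho_{+}(A,\tilde{k})$ of a vector of known sparsity ever enters.

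Second, your treatment of the $L$-term does not go through. The sufficient-decrease inequality $\tfrac{L}{2}\normu{x^{+}-x}^2\leq\phi_{\lambda}(x)-\phi_{\lambda}(x^{+})$ requires $\phi_{\lambda}(x^{+})\leq m_{\lambda,L}(x,x^{+})$, i.e.\ $L\geq\normu{A(x^{+}-x)}^2/\normu{x^{+}-x}^2$; but the lemma must hold for \emph{every} $L\leq\gamma_{\rm inc}\rho_{+}(A,2\tilde{k})$, including small values tried during backtracking for which this test can fail (Theorem~\ref{convresult2} applies the lemma to all such $L$ precisely in order to certify that backtracking terminates with $M_{t+1}\leq\gamma_{\rm inc}\rho_{+}(A,2\tilde{k})$). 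Moreover, $\phi_{\lambda}(x^{+})\leq m_{\lambda,L}(x,x_0)$ bounds the decrease from below, not above, so that link in your chain points the wrong way. The paper sidesteps all of this: testing against $u$ with $\dnorm{u}\leq 1$ gives $\inner{u}{L(x^{+}-x)}\leq L\norm{x^{+}-x}$ in the \emph{regularizing} norm, which is bounded by $\norm{x^{+}-x_0}+\norm{x-x_0}$ via Lemma~\ref{conv-lemma-2}; this term then scales like $k_0$, matching the $\tilde{k}\lambda$ on the left, and the resulting quadratic inequality $\tfrac34\tilde{k}\leq\alpha+\beta\sqrt{\tilde{k}}$ closes exactly against $\tilde{k}=36rck_0(1+\gamma)\gamma_{\rm inc}$. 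This is the answer to the difficulty you flag at the end: the extra $\sqrt{\tilde{k}}$ is avoided not by a sharper Euclidean bound on $\normu{x^{+}-x}$ but by never leaving the regularizing norm. Even setting the two gaps aside, a rough accounting of your route (a term $3\lambda\sqrt{rk_0(1+\gamma)}$ from RIP plus a term of order $\sqrt{\gamma_{\rm inc}}\,\lambda\sqrt{rk_0(1+\gamma)}$ with a constant near $5$ from the decrease bound) yields $K(x^{+})\lesssim 110\,\gamma_{\rm inc}rk_0(1+\gamma)$, which exceeds $\tilde{k}=36c\,\gamma_{\rm inc}rk_0(1+\gamma)$ for $c\leq 2$, so the constants do not appear to close for the stated $\tilde{k}$.
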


\subsection{Proof of Theorem \ref{convresult2}}
First we show that $L_t \leq \gamma_{\rm inc} \rho_{+}\left(A, 2\tilde{k}\right)$ and $K\left(x^{\left(t\right)}\right) \leq \tilde{k}$ for all $t\geq 0$. The inequalities hold true for $t=0$ by the hypothesis. Suppose $L_t \leq \gamma_{\rm inc} \rho_{+}\left(A, \tilde{k}\right)$ and $K\left(x^{\left(t\right)}\right) \leq \tilde{k}$ for some $t \geq 0$. Since $\phi_{\lambda}{\left(x^{\left(t\right)}\right)} \leq \phi_{\lambda}{\left(x^{\left(0\right)}\right)}$, by Lemma \ref{conv-lemma-2}, we have:
$$\max{\{\frac{1}{2 \lambda}\normu{A\left(x^{(t)}-x_{0}\right)}^2,\norm{x^{(t)}-x_{0}}\}} \leq  \frac{9 ck_{0} \lambda\left(1+\gamma\right)}{2 \rho_{-}\left(A,{ c\left(1+\gamma\right)^2 k_{0}}\right)}.$$

By Lemma \ref{conv-lemma-2}, Lemma \ref{conv-lemma-3} and Theorem \ref{thm-con-2}, for any $L \leq \gamma_{\rm inc} \rho_{+}\left(A, 2\tilde{k}\right)$
\begin{align*}
&K\left({\rm Prox}_{\lambda,L}\left(x^{\left(t\right)}\right)\right) \leq \tilde{k}, \\
 &K\left({\rm Prox}_{\lambda,L}\left(x^{\left(t\right)}\right) - x^{\left(t\right)}\right) \leq 2 \tilde{k}.
\end{align*}

Now we can use Proposition \ref{str-lip} to conclude that $M_{t+1} \leq  \gamma_{\rm inc}\rho_{+}\left(A, 2\tilde{k}\right)$ hence  $L_{t+1} \leq M_{t+1} / \gamma_{\rm dec} \leq  \gamma_{\rm inc} \rho_{+}\left(A, 2\tilde{k}\right)$. In addition, by Lemma \ref{conv-lemma-3}, $K\left(x^{\left(t+1\right)}\right) = K\left({\rm Prox}_{\lambda,M_{t+1}}\left(x^{\left(t\right)}\right)\right) \leq \tilde{k}$. 

Since ${\rm Prox}_{\lambda,L}{\left(x^*\right)} = x^*$ for any $L>0$, by Lemmas \ref{conv-lemma-1}, \ref{conv-lemma-2}, and \ref{conv-lemma-3}, $K\left(x^*\right)\leq \tilde{k}$. By Theorem \ref{thm-con-2}, we have:
$$K\left(x^{\left(t+1\right)} - x^{\left(t\right)}\right) \leq 2 \tilde{k}, \; K\left(x^{\left(t\right)} - x^*\right) \leq 2 \tilde{k},$$
\noindent which yields
\begin{align}\notag
\dnorm{A^* A \left(x^{\left(t+1\right)} - x^{\left(t\right)}\right)} &= \max_{a \in \mathcal{G}_{\norm{\cdot}}}{\inner{a}{A^* A \left(x^{\left(t+1\right)} - x^{\left(t\right)}\right)}}\\ \label{L-prime}
&= \max_{a \in \mathcal{G}_{\norm{\cdot}}}{\inner{A a}{ A \left(x^{\left(t+1\right)} - x^{\left(t\right)}\right)}} \leq \sqrt{\rho_+\left(A,1\right)\rho_+\left(A,2\tilde{k}\right)} \normu{x^{\left(t+1\right)}-x^{\left(t\right)}}.
\end{align}

Now Proposition \ref{str-lip} and \eqref{L-prime} ensure that all the hypotheses of Proposition \ref{convresult} are satisfied with $\mu_f = \rho_-\left(A,2\tilde{k}\right)$, $L_f = \rho_+\left(A,2\tilde{k}\right)$. $L'_f = \sqrt{\rho_+\left(A,1\right)\rho_+\left(A,2\tilde{k}\right)}$ and $\theta = 1$. Thus the conclusion follows from Proposition \ref{convresult}.


\subsection{Proof of Theorem \ref{convresult3}}
Let $y^*_{t} \in \argmin \phi_{\lambda_{t}}\left(y\right)$ .
For the ease of notation let $\lambda_{N+1} \leftarrow \lambda_{\rm tgt}$. First we show that $\omega_{\lambda_{t+1}}\left(y^{\left(t\right)}\right) \leq \delta \lambda_{t+1}$ and $K\left(y^{\left(t\right)}\right) \leq \tilde{k}$ for $t = 0,1,\ldots, N$. When $t=0$, we have $y^{(0)} = 0$ and $\lambda_0 = \dnorm{A^* b}$. Therefore, $K(y^{(0)}) = 0$ and
\begin{align*}
\omega_{\lambda_{1}}(y^{(0)}) &= \min_{\xi \in \partial\norm{0}}\dnorm{A^*b + \lambda_1 \xi} \\
&\text{Since $\frac{-A^* b}{\lambda_0} \in \partial \norm{0} $}\\
&\leq \dnorm{A^*b - \frac{\lambda_1 }{\lambda_0} A^* b} \\
& = (1-\eta) \lambda_{0} \leq \delta \lambda_1,
\end{align*}
 where in the last inequality we used \eqref{eta}. Suppose $\omega_{\lambda_{t}}\left(y^{\left(t-1\right)}\right) \leq \delta \lambda_{t}$ and $K\left(y^{\left(t-1\right)}\right) \leq \tilde{k}$. By Theorem \ref{convresult2}, we have:
$$K\left(y^{\left(t\right)}\right) \leq \tilde{k}.$$

By \eqref{omega-1}, the stopping condition in the proximal gradient algorithm ensures $\omega_{\lambda_{t}}\left(y^{\left(t\right)}\right) \leq \delta' \lambda_{t}$. Therefore, there exists $\xi \in \partial \norm{y^{(t)}}$ such that $\dnorm{A^* \left(A y^{\left(t\right)} - b\right) + \lambda_t \xi} \leq \delta' \lambda_{t}$. Now using hypothesis \eqref{eta}, we get:
\begin{align*}
\omega_{\lambda_{t+1}}\left(y^{\left(t\right)}\right) &\leq \dnorm{A^* \left(A y^{\left(t\right)} - b\right) + \lambda_{t+1} \xi}\\
& \leq \dnorm{A^* \left(A y^{\left(t\right)} - b\right) + \lambda_{t} \xi } +  \dnorm{ \left(\lambda_{t+1} - \lambda_{t}\right)\xi}\\
& \leq \omega_{\lambda_{t}}{\left(y^{(t)}\right)} + \left(\lambda_{t} - \lambda_{t+1}\right) \leq     \left(-1+\left(\delta'+1\right)/\eta\right)\lambda_{t+1} \leq \delta \lambda_{t+1}.
\end{align*}

By Lemma \ref{conv-lemma-1} and the comment that follows it, for all $t = 0, \ldots, N $, we have
  \begin{align*}
 \norm{y^{\left(t\right)}-y^*_{t+1}} &\leq \norm{y^{\left(t\right)}-x_{0}} + \norm{y^*_{t+1}-x_{0}}\\
  &\leq \frac{{ck_{0}\left(1+\gamma\right)\left(\left(2+\delta\right)\lambda_{t+1} + 2\dnorm{A^* z}\right)}}{\rho_{-}\left(A,{ c\left(1+\gamma\right)^2 k_{0}}\right)} \\
  &\leq \frac{{3 ck_{0}\left(1+\gamma\right)\lambda_{t+1} }}{\rho_{-}\left(A,{ c\left(1+\gamma\right)^2 k_{0}}\right)}. 
\end{align*}

Hence
\begin{align*}
\phi_{\lambda_{t+1}}\left(y^{\left(t\right)}\right) -\phi_{\lambda_{t+1}}\left(y^*_{t+1}\right)&\leq \inner{\omega_{\lambda_{t+1}}\left(y^{(t)}\right)}{y^{(t)}-y^*_{t+1}}\\
&\leq {\omega_{\lambda_{t+1}}\left(y^{\left(t\right)}\right)}\norm{y^{\left(t\right)}-y^*_{t+1}}\\
&\leq \frac{{3 \delta ck_{0}\left(1+\gamma\right)\lambda_{t+1}^2 }}{\rho_{-}\left(A,{ c\left(1+\gamma\right)^2 k_{0}}\right)}.
\end{align*}

Now the upper bounds in \eqref{inner-iteration-bound-1} and \eqref{inner-iteration-bound-2} on the number of inner iterations follow from the second conclusion in Theorem \ref{convresult2}.

By \eqref{H-second}, we have
$$\norm{y-y^*} \leq  \norm{y-y_0} + \norm{y_0-y^*} \leq  \frac{9 ck_{0} \lambda_{\rm \rm tgt}\left(1+\gamma\right)}{ \rho_{-}\left(A,{ c\left(1+\gamma\right)^2 k_{0}}\right)}.$$

By convexity of $\phi_{\lambda_{\rm \rm tgt}}$, we get:
\begin{align*}
\phi_{\lambda_{\rm \rm tgt}}\left(y\right) -\phi_{\lambda_{\rm \rm tgt}}\left(y^*\right) &\leq  \inner{\omega_{\lambda_{tgt}}\left(y\right)}{y-y^*}\\
&\leq  \frac{9  ck_{0} \lambda_{\rm \rm tgt}\left(1+\gamma\right) \epsilon}{ \rho_{-}\left(A,{ c\left(1+\gamma\right)^2 k_{0}}\right)}.
\end{align*}

\begin{figure}
\begin{subfigure}{0.5\linewidth}
\centering
\includegraphics[width=.85\textwidth]{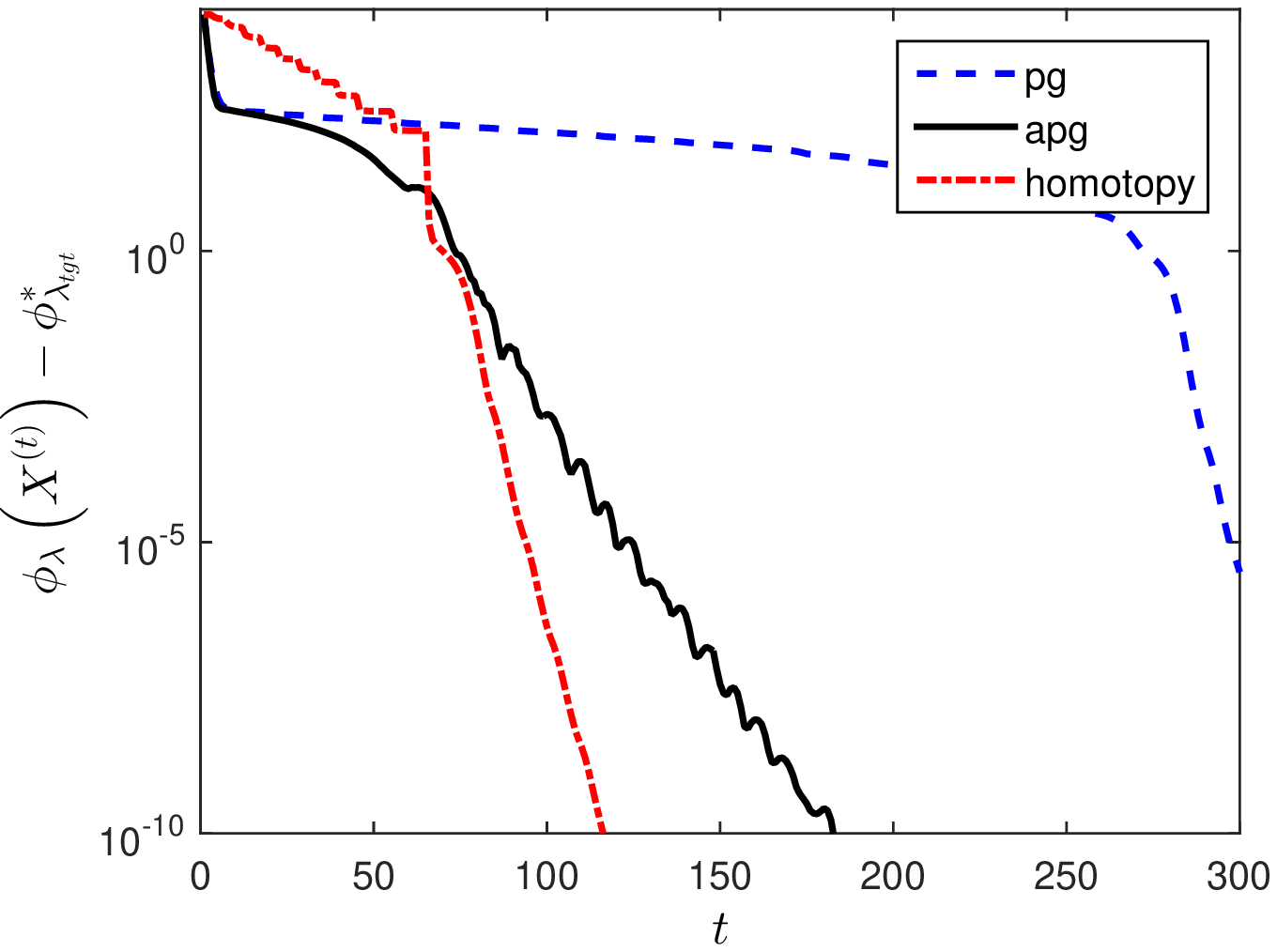}
\caption{Objective gap vs. iteration}\label{fig:duet}
\end{subfigure}%
\begin{subfigure}{0.5\linewidth}
\centering
\includegraphics[width=.85\textwidth]{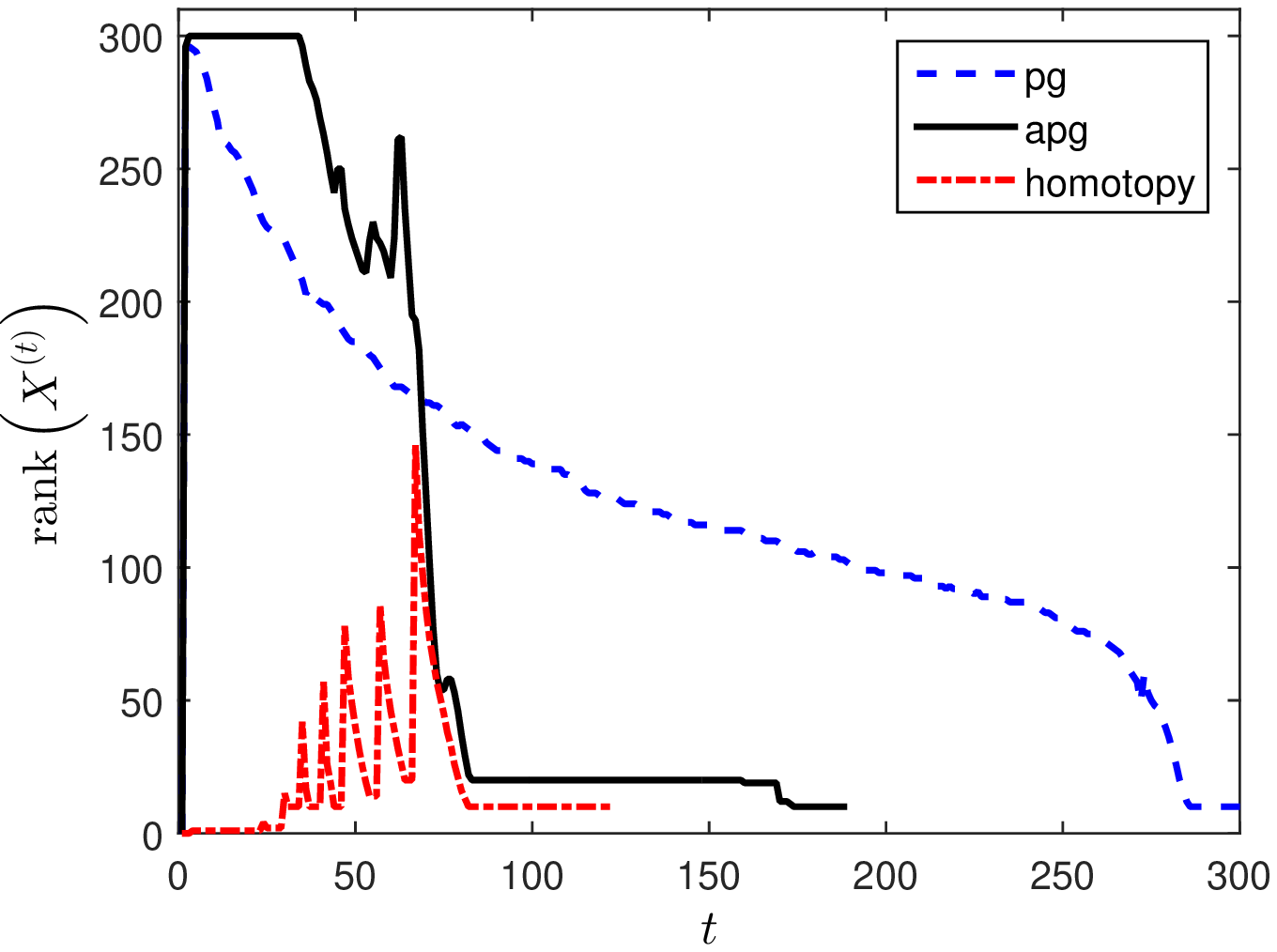}
\caption{Rank vs. iteration}\label{fig:duet2}
\end{subfigure}\\[1ex]
\caption{Comparison of homotopy, proximal-gradient and accelerated proximal-gradient algorithms for problem 1}\label{fig:results2}
\end{figure}

\section{Numerical Experiments}
We consider two problems. The details of each problem are summarized in the following table:

\begin{center}
\begin{tabular}{c |c| c|}
\cline{2-3}
&Problem $1$ & Problem $2$ \\
\cline{1-3} \cline{1-3}
\multicolumn{1}{ |c|| } {Objective} & $\frac{1}{2} \normu{A \vect(X)+b}^2+ \lambda \normn{X}$&$\frac{1}{2} \normu{ A \vect(X)+b}^2+ \lambda \normll{X}$ \\
\hline 
\multicolumn{1}{ |c|| }{ dimension of $X_0$} & $300 \times 300$ & $50 \times 1000$\\
\hline
\multicolumn{1}{ |c|| }{ $K(X_0)$}&${\rm rank}(X_0) = 10$&\# of non-zero columns of $X_0 = 50$\\
\hline
\multicolumn{1}{ |c|| } {$\# {\rm of\, samples}$}&$ m = 20000$&$m = 18000$\\
\hline
\multicolumn{1}{ |c|| } {$b $}&$A \vect(X_0)+z$&$A \vect(X_0)+z$\\
\hline
\multicolumn{1}{ |c|| } {$A_{i,j}$ sampled from}&$\mathcal{N}(0,1/\sqrt{m})$&
$\{-1/\sqrt{m},1/\sqrt{m}\}$ uniformly at rand.\\
\hline
\multicolumn{1}{ |c|| } {$z_i$ sampled from}&$\mathcal{U}(-0.005,0.005)$&$\mathcal{U}(-0.005,0.005)$\\
\hline
\end{tabular}

\end{center}

In the homotopy algorithm, $\lambda_0 =  \dnorm{A^T b}$ and $\lambda_{\rm \rm tgt} =  4 \dnorm{A^T z} $, while in the proximal-gradient algorithm $\lambda = \lambda_{\rm \rm tgt}$. The default values of $\eta$ and $\delta'$ in the homotopy algorithm are $\eta = 0.6$, $\delta' = 0.2$. 

{\bf  Problem 1.}  Figure~\ref{fig:results2} demonstrates the overall linear rate of convergence of proximal-gradient homotopy algorithm (homotopy) applied to this problem and compares it with proximal-gradient algorithm (PG) and its accelerated version (APG). As rank vs. iteration plot demonstrates, the proximal-gradient algorithm speeds up to a linear rate when the rank drops to a certain level, while the homotopy algorithm keeps the rank at a level that ensures a linear rate of convergence.

We examine the performance of homotopy algorithm with three different values of $\eta$ and $\delta'$ in Figure~\ref{fig:results}. For $\eta$ to satisfy the condition of Theorem~\ref{convresult3}, it is necessary that $\eta > 0.5$. However, as Figure~\ref{fig:results} demonstrates, one can choose $\eta \leq 0.5$ and still get an overall linear rate of convergence. For example, when $\eta = 0.2$, at the beginning of the last stage where $\lambda = \lambda_{\rm \rm tgt}$, $X^{(k)}$ is not low-rank and the algorithm has a sublinear rate of convergence, but nevertheless the algorithm converges faster with $\eta = 0.2$ than $\eta = 0.7$. Homotopy algorithm appears to be even less sensitive to $\delta'$. As $\delta'$ gets closer to $1$, the rank of $X^{(k)}$ jumps higher, which can cause a slowdown in convergence specially at the beginning of each stage.

In Figure~\ref{fig:comp2}, we have compared recovery error of the following algorithms: SVP, FPC, APGL, homotopy, proximal-gradient and its accelerated version. In SVP we provide the algorithm with the rank of $X_0$, while in SVP2 we use the same heuristic that is proposed in \cite{jain2010guaranteed} to estimate the rank (other algorithms do not receive the rank of $X_0$).We have implemented the FPC algorithm with the backtracking procedure which improves the performance of the algorithm. Both APGL and APGL2 have been implemented with continuation over $\lambda$ with the latter utilizing an extra truncation heuristic proposed in  \cite{toh2010accelerated}. The method of continuation for APGL is the same as the one proposed in \cite{toh2010accelerated}; we reduce $\lambda$ by a factor of $0.7$ after three iterations or whenever the stopping criterion is met whichever comes first. In FPC and APGL similar to the homotopy algorithms, $\lambda_0 = \dnorm{A^T(b)}$ and $\lambda_{\rm \rm tgt} =  4 \dnorm{A^T(z)} $. We have used the default values of the parameters in all the algorithms. Note that APGL2 has an extra truncation procedure which improves the recovery error. Finally, Figure~\ref{fig:comp1} shows the objective gap for the algorithms for which the quantity is meaningful.

\begin{figure}
       \begin{subfigure}{0.5\textwidth}
                \centering
                \includegraphics[width=.85\textwidth]{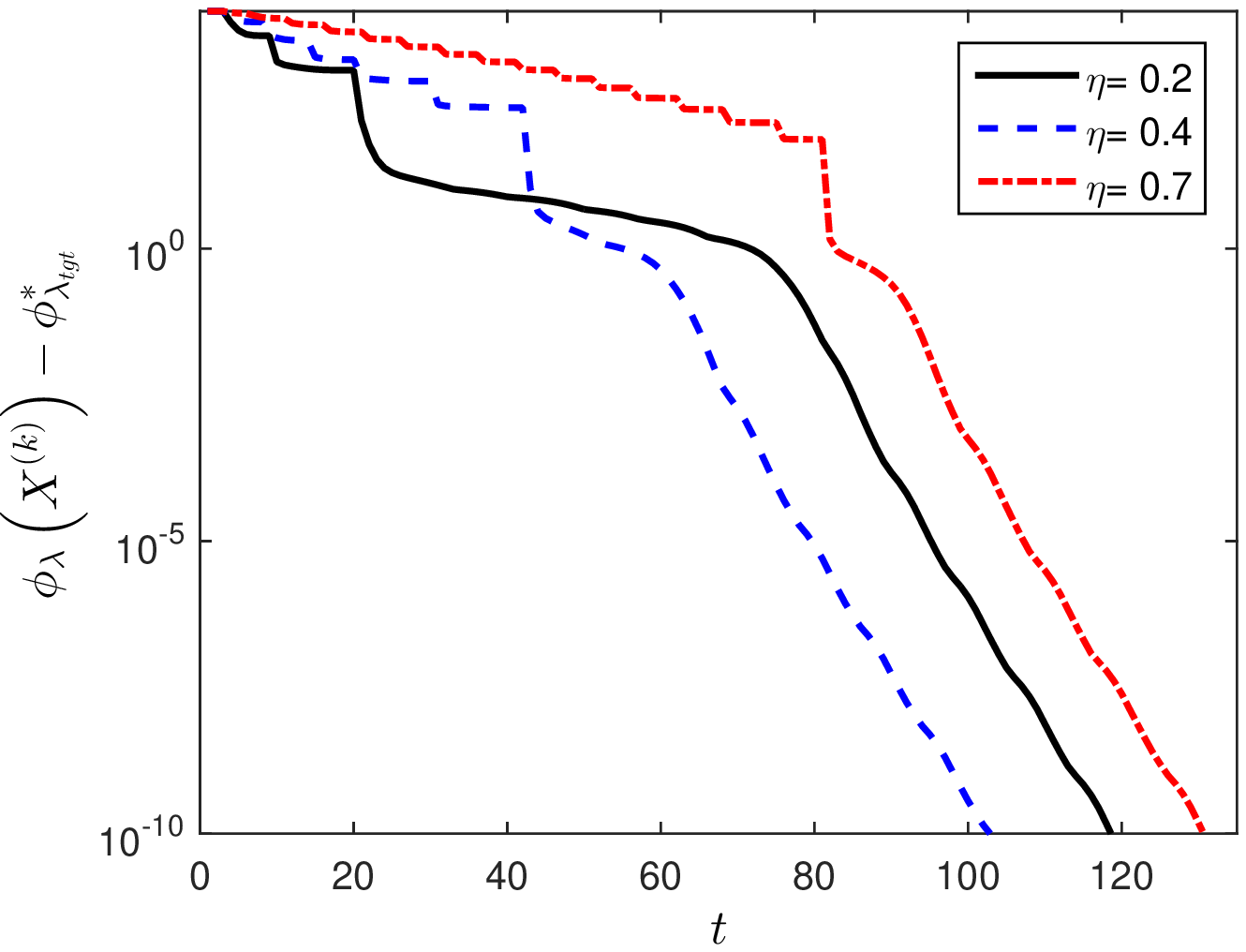}
                \caption{Objective gap vs. iteration}\label{fig:eta}
        \end{subfigure}%
        \begin{subfigure}{0.5\textwidth}
                \centering
                \includegraphics[width=.85\textwidth]{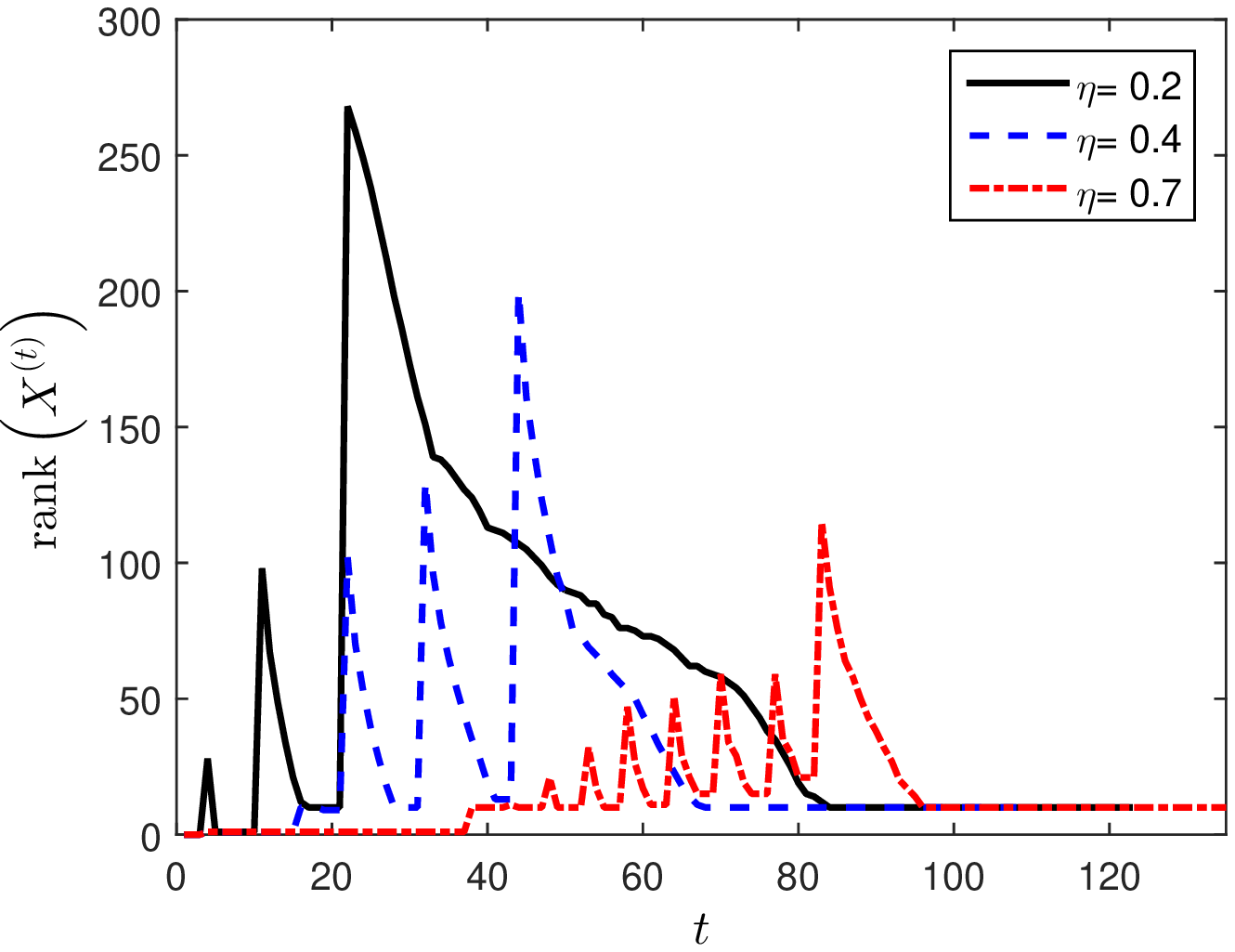}
                 \caption{ Rank vs. iteration}\label{fig:eta3}
                \label{fig:stp}
        \end{subfigure}
        \begin{subfigure}{0.5\textwidth}
                \centering
                \includegraphics[width=.85 \textwidth]{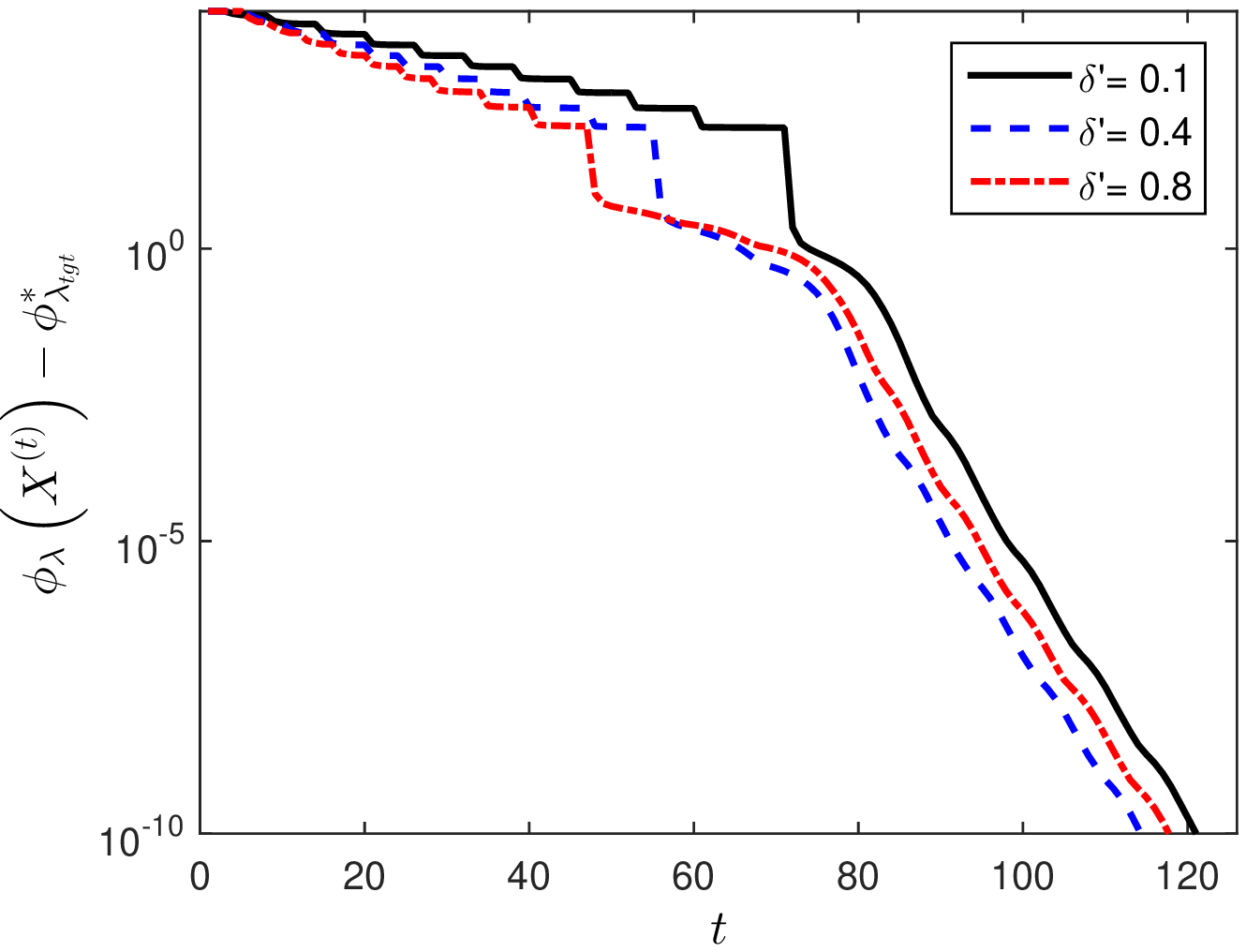}
                \caption{Objective gap vs. iteration}\label{fig:delta}
        \end{subfigure}%
         \begin{subfigure}{0.5\textwidth}
                \centering
                \includegraphics[width=.85\textwidth]{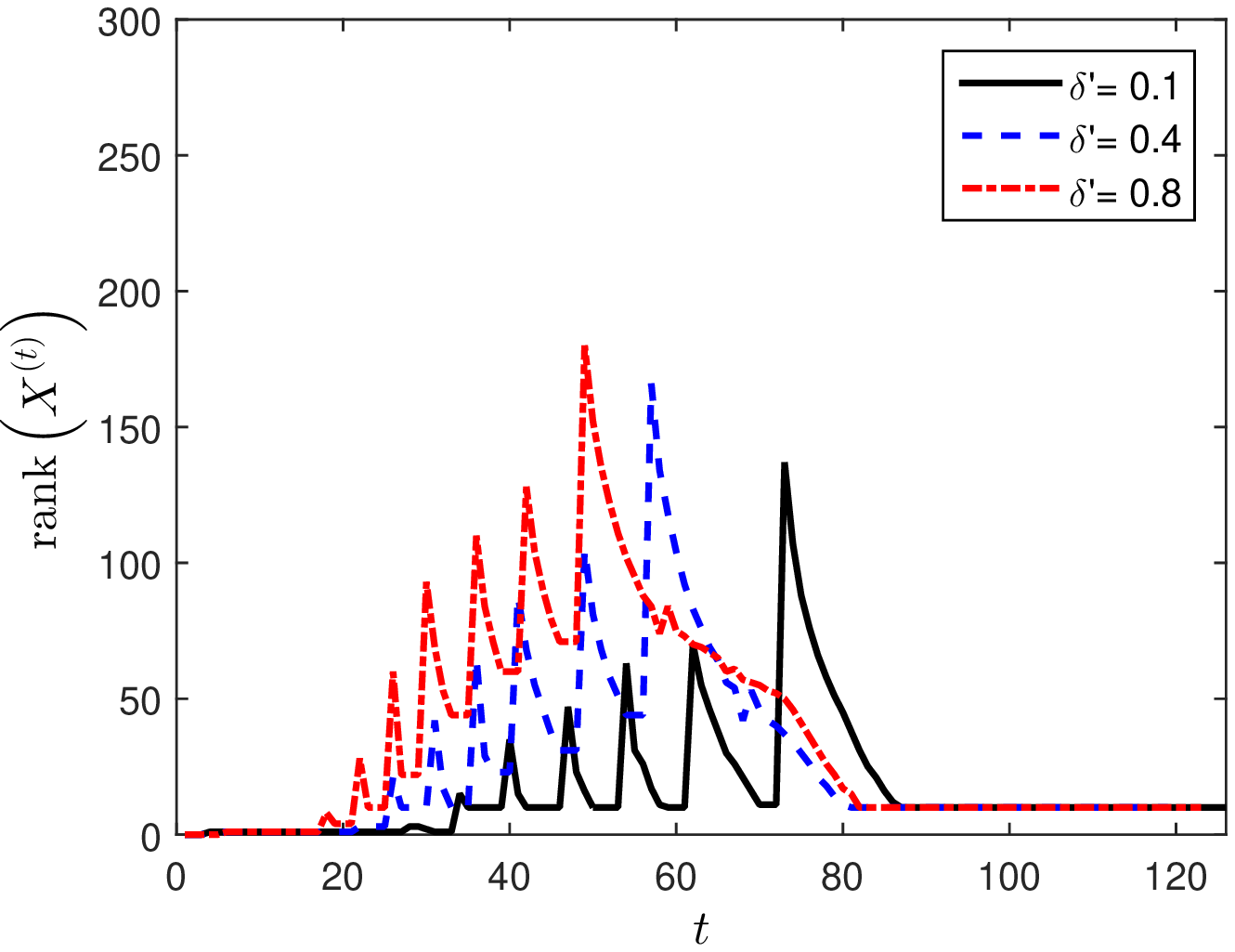}
                 \caption{ Rank vs. iteration}\label{fig:delta3}
                \label{fig:stp}
        \end{subfigure}
        \caption{(a), (b): Performance of homotopy algorithm with $\delta' = 0.2$ and three different values of $\eta$, \\
                       (c), (d): Performance of homotopy algorithm with $\eta = 0.6$ and three different values of  $\delta'$} \label{fig:results}
\end{figure}

\begin{figure}
       \begin{subfigure}{0.5\textwidth}
                \centering

                \includegraphics[width=.85\textwidth]{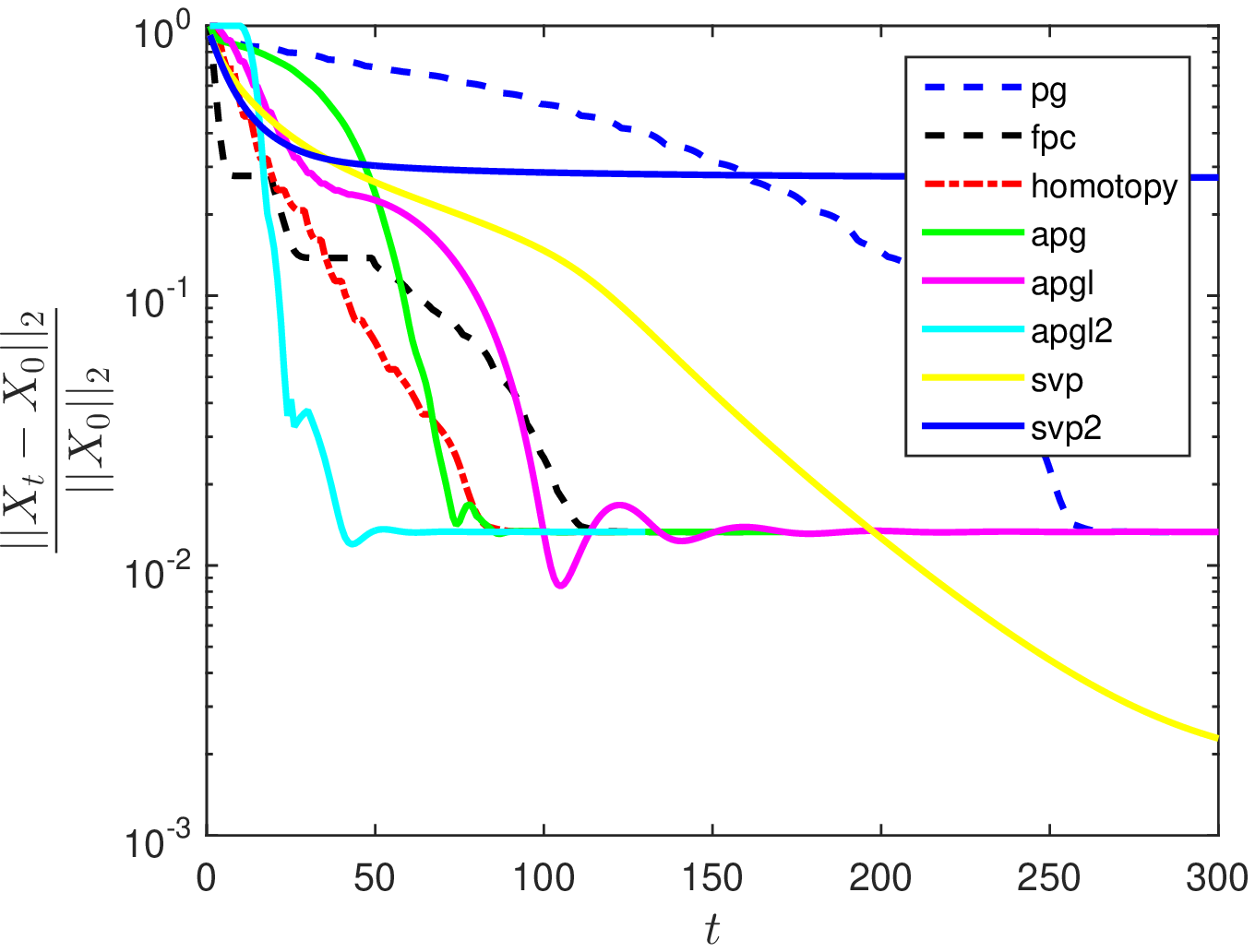}
                 \caption{Recovery error vs. iteration}\label{fig:comp2}

                                            \end{subfigure}%
        \begin{subfigure}{0.5\textwidth}
                  \centering
                
                \includegraphics[width=.85\textwidth]{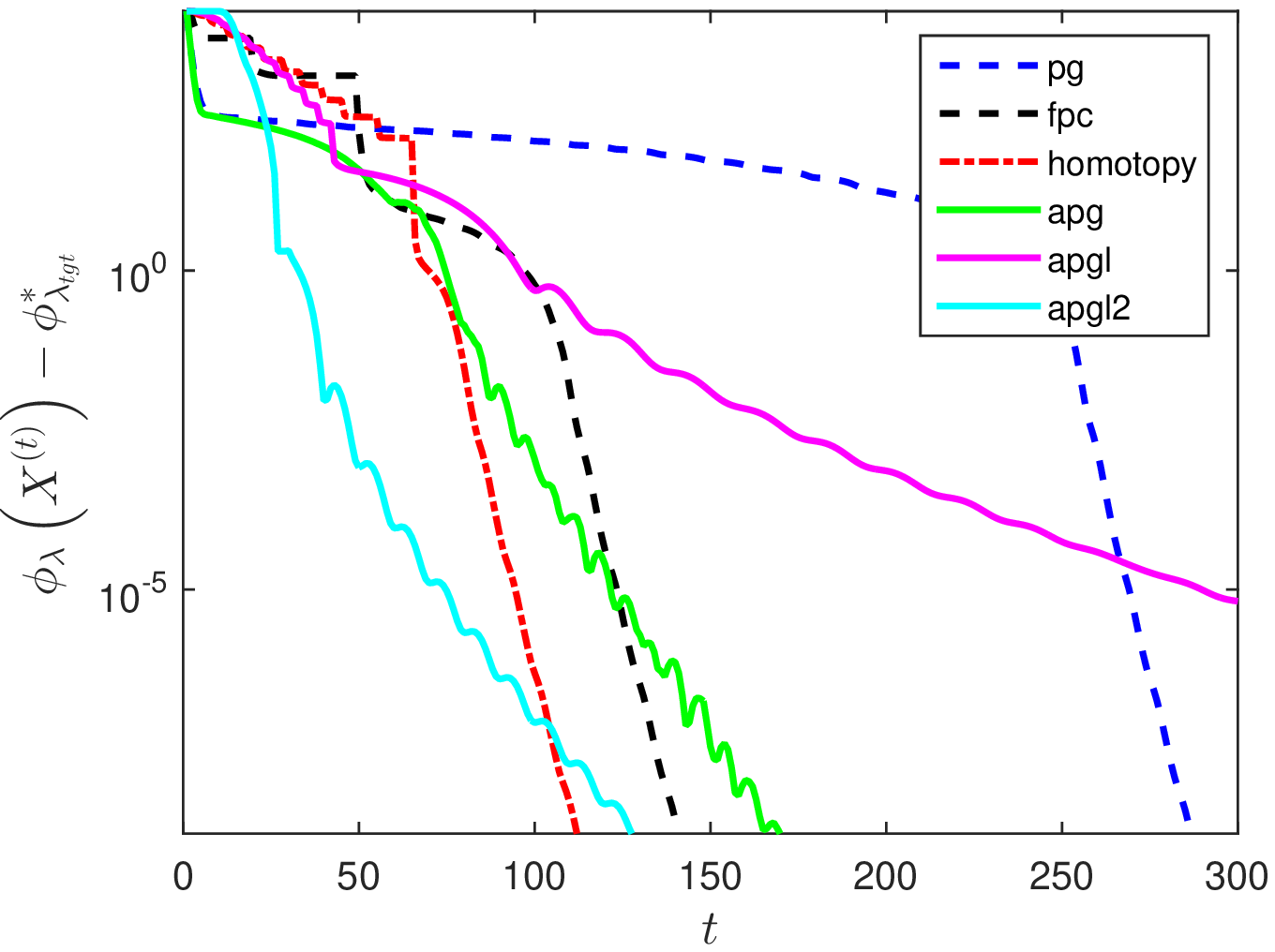}
                \caption{Objective gap vs. iteration}\label{fig:comp1}
        \end{subfigure}
        \caption{Comparison between SVP, FPC, APGL, homotopy, proximal-gradient and its accelerated version}\label{fig:results3}
\end{figure}

{\bf Problem 2.} Figure~\ref{fig:Block} demonstrates the linear convergence of homotopy algorithm for this problem and compares the performance with that of proximal-gradient algorithm and its accelerated version. Similar to problem 1, homotopy algorithm keeps the number of non-zero columns below a certain level. In homotopy algorithm $\delta' = 0.2$ and $\eta = 0.6$.

 
\begin{figure}
\begin{subfigure}{0.5\linewidth}
\centering
\includegraphics[width = .85\textwidth]{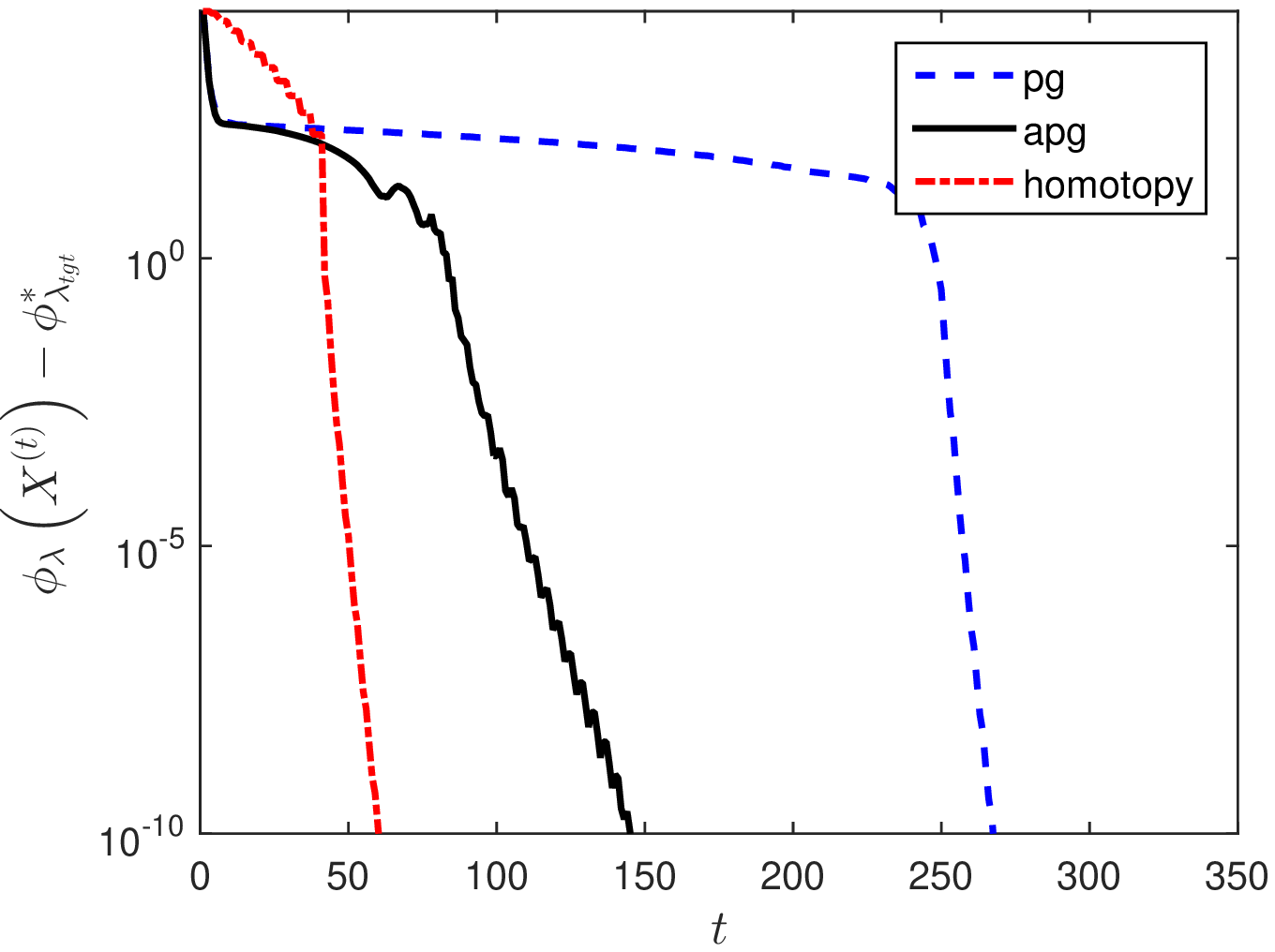}
\caption{Objective gap vs. iteration}\label{fig:Block1}
\end{subfigure}%
\begin{subfigure}{0.5\linewidth}
\centering
\includegraphics[width=.85\textwidth]{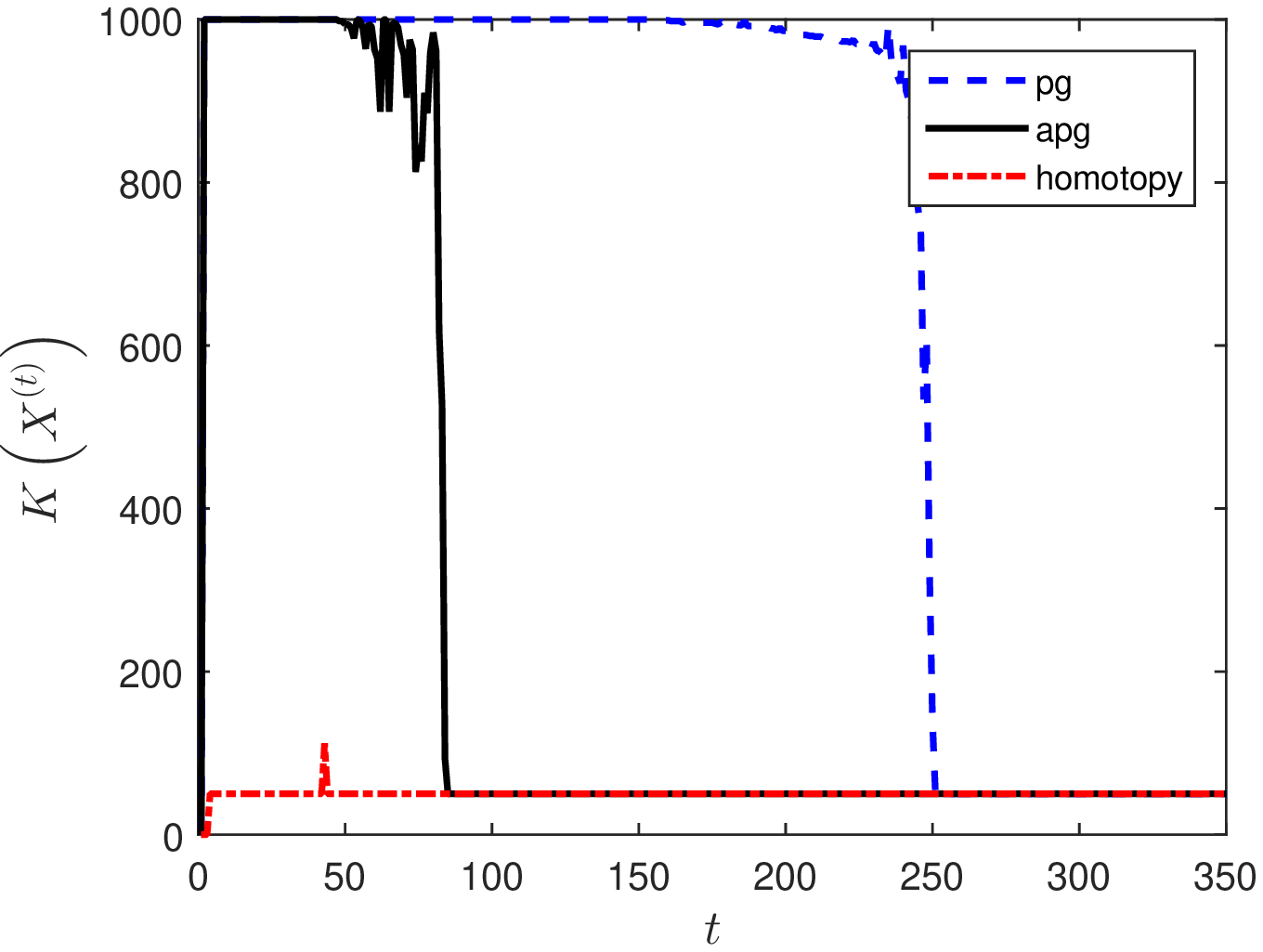}
\caption{Number of non-zero columns vs. iteration}\label{fig:Block2}
\end{subfigure}\\[1ex]
\begin{subfigure}{\linewidth}
\centering
\includegraphics[width=0.425\textwidth]{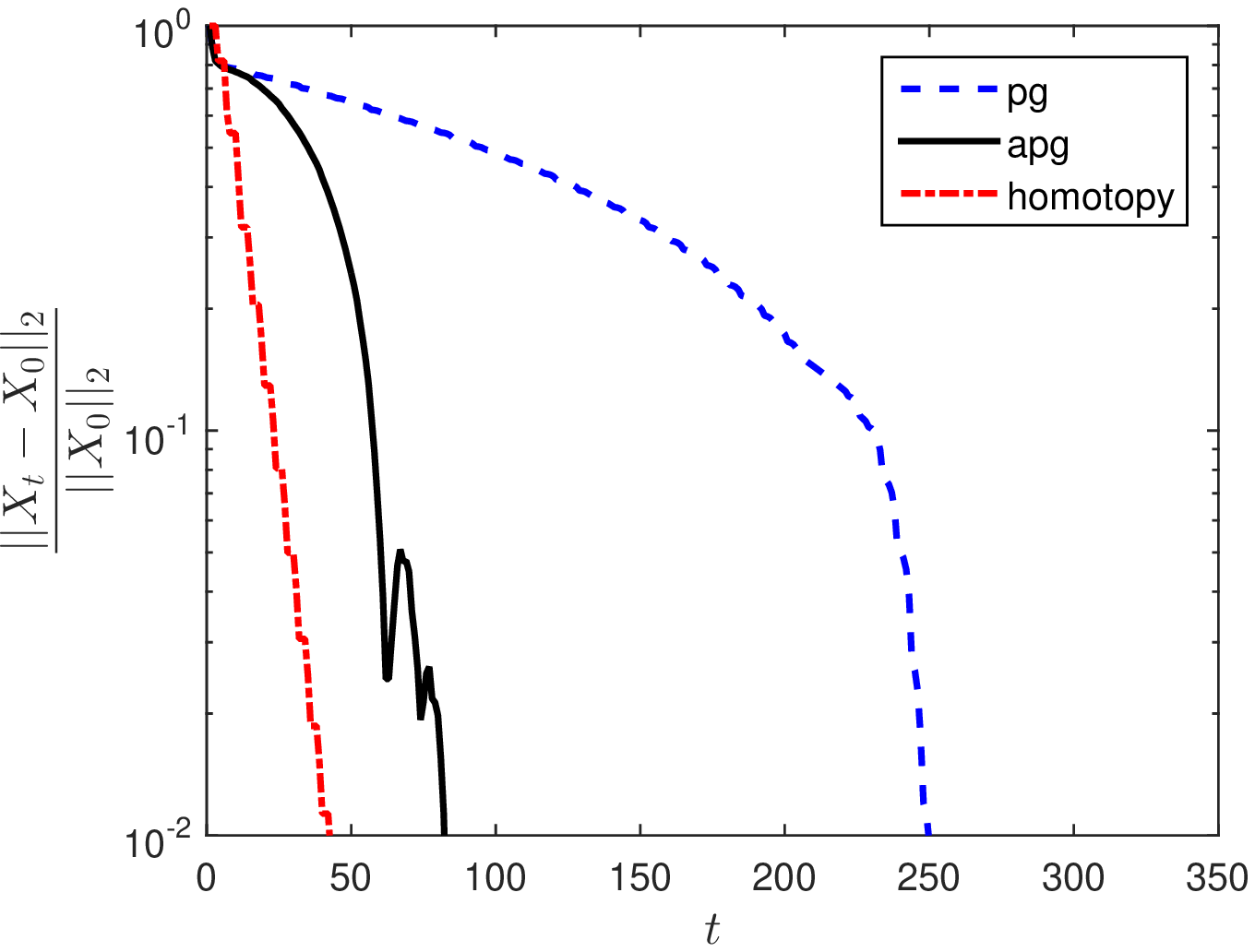}
\caption{ Recovery error vs. iteration}\label{fig:Block3}
\end{subfigure}
\caption{Comparison of homotopy, proximal-gradient and accelerated proximal-gradient algorithms for problem 2}\label{fig:Block}
\end{figure}

\appendix
\section*{Appendix A}

In this section we give a lower bound on the number of measurements $m$ that suffice for the existence of $r>1$ in Assumption~\ref{assumption-2} with high probability when $A$ is sampled from a certain class of distributions. To simplify the notation we assume that $B = I$; therefore, $\inner{x}{y} = x^T y$. Given a random variable $z$ the sub-Gaussian norm of $z$ is defined as:
$$\norm{z}_{\psi_2} = \inf\{\beta> 0 \,| \, \mathbb{E} \psi_2\left(\frac{|z|}{\beta}\right) \leq 1\},$$

\noindent where $\psi_2(x) = e^{x^2}-1$. For an $n$ dimensional random vector $w \sim P$ the sub-Gaussian norm is defined as 
$$\norm{w}_{\psi_2} = \sup_{u \in S^{n-1}} \norm{\inner{w}{u}}_{\psi_2}.$$

$P$ is called isotropic if $E \left[\inner{w}{u}^2\right] = 1$ for all $u \in S^{n-1}$.  Two important examples of sub-Gaussian random variables are Gaussian and bounded random variables. Suppose $A: \mathbb{R}^{n} \mapsto \mathbb{R}^{m}$ is given by:
\begin{align}\label{linear-op}
(Ax)_i = \frac{1}{\sqrt{m}}\inner{A_i}{x} \quad \forall  i \in \{1,2,\ldots,m\},
\end{align}

\noindent where $A_i$, $1\leq i \leq m$ are iid samples from an isotropic sub-Gaussian distribution $P$ on $\mathbb{R}^n$. Two important examples are standard Gaussian vector $A_{i}\sim \mathcal{N}(0,I_n)$ and random vector of independent Rademacher variables \footnote{For general psd $B$, the example are $A_i = B^{-\frac{1}{2}} A_i'  $ with $A_{i}' \sim \mathcal{N}(0,I_n)$ or $A_{i,j}'$ Rademacher for all $j$.}. We want to bound the following probabilities for $\theta \in (0,1)$:

\begin{align}\label{RIPprob}
P(\rho_{-}(A,k)< 1- \theta )\\
P(\rho_{+}(A,k)> 1+ \theta ).
\end{align}

 When $A_{i} \sim \mathcal{N}(0,I_n)$ for all $i$, one can use the generalization of Slepian's lemma by Gordon \cite{gordon1985some} alongside concentration inequalities for Lipschitz function of Gaussian random variable to derive (see, for example, \cite[chapter 15]{ledoux2013probability}):
 \begin{align*}
& P(\sqrt{\rho_{-}(A,k)} < \sqrt{\frac{m}{m+1}} - \theta) \leq e^{-\frac{m {\theta}^2}{8}},\\
 &P(\sqrt{\rho_{+}(A,k)} > 1+ \theta) \leq e^{-\frac{m {\theta}^2}{8}},\\
  \end{align*}
  \noindent whenever,
   \begin{align*}
  {\theta} \geq \frac{2 G(k)}{\sqrt{m}}.
  \end{align*}

Here, $G$ is defined as:
$$G(k) :=  \mathbb{E} \sup_{u \in \sqrt{k} \mathcal{B}_{\norm{\cdot}} \cap S^{n-1}}|\inner{u}{g}|,$$
where $g \sim \mathcal{N}(0,I_n)$. 
For sub-Gaussian case, we use a result by Mendelson et al.\cite[Theorem~2.3]{mendelson2007reconstruction}. Using Talgrand's generic chaining theorem \cite[Theorem 2.1.1]{talagrand2005generic}, the authors have given a result, which similar to the Gaussian case depends on $G(k)$. Their result in our notation states:

\begin{proposition}\label{proposition-1}
Suppose A is given by \eqref{linear-op}. If $P$ is an isotropic distribution and $\norm{A_1}_{\psi_2} \leq \alpha$, then there exist constants $c_1$ and $c_2$ such that
\begin{equation}
\rho_{-}(A/\sqrt{m},k) \geq 1 - \theta ,
\end{equation}
\begin{equation}
\rho_{+}(A/\sqrt{m},k) \leq 1 + \theta ,
\end{equation}

\noindent with probability exceeding $1 - \exp{(-c_2 \theta^2 m / \alpha^4)}$ whenever 
$$\theta \geq  \frac{c_1 \alpha^2 G(k)}{\sqrt{m}}.$$
\end{proposition}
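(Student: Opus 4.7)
The plan is to reduce both one-sided RIP inequalities to a single uniform empirical-process statement over the set $T_k := \sqrt{k}\,\mathcal{B}_{\norm{\cdot}}\cap S^{n-1}$, and then invoke Theorem~2.3 of Mendelson, Pajor and Tomczak-Jaegermann as a black box.

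First I would note that the constraint $\norm{x}^2 \leq k\normu{x}^2$ in the definition of $\rho_{\pm}(A,k)$ is $2$-homogeneous in $x$, so one may restrict to unit Euclidean-norm vectors and obtain
\[
\rho_{-}(A,k) = \inf_{x \in T_k}\normu{Ax}^2, \qquad \rho_{+}(A,k) = \sup_{x \in T_k}\normu{Ax}^2.
\]
With the $1/\sqrt{m}$ normalization already built into \eqref{linear-op}, we have $\normu{Ax}^2 = \tfrac{1}{m}\sum_{i=1}^{m}\inner{A_i}{x}^2$, and the isotropy hypothesis gives $\mathbb{E}\inner{A_i}{x}^2 = 1$ for every $x \in T_k$. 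The proposition is therefore equivalent to the two-sided uniform concentration statement
\[
\sup_{x\in T_k}\Big|\tfrac{1}{m}\sum_{i=1}^{m}\inner{A_i}{x}^2 - 1\Big| \leq \theta,
\]
from which both $\rho_{-}(A,k)\geq 1-\theta$ and $\rho_{+}(A,k)\leq 1+\theta$ follow immediately on the same event.

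Next I would apply the Mendelson--Pajor--Tomczak-Jaegermann theorem, which is designed to control exactly such quadratic empirical processes for iid sub-Gaussian samples. Their bound is expressed in terms of the $\gamma_2$-functional of the index set with respect to the sub-Gaussian metric $d(x,y) = \norm{\inner{A_1}{x-y}}_{\psi_2}$. The assumption $\norm{A_1}_{\psi_2}\leq\alpha$ dominates this metric by $\alpha\normu{x-y}$, so $\gamma_2(T_k, d)\leq \alpha\,\gamma_2(T_k,\normu{\cdot})$; Talagrand's majorizing-measure theorem then identifies $\gamma_2(T_k,\normu{\cdot})$, up to a universal constant, with the Gaussian mean width $G(k)$. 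Plugging these estimates into the tail inequality of Mendelson et al.\ produces precisely the scaling $\theta \gtrsim \alpha^2 G(k)/\sqrt{m}$, together with the Gaussian-type tail $\exp(-c_2\theta^2 m/\alpha^4)$, absorbing all remaining universal factors into $c_1$ and $c_2$.

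The principal obstacle is bookkeeping constants and aligning conventions: Theorem~2.3 of Mendelson et al.\ is stated for an abstract class of square-integrable functions with $\mathbb{E} f^2 = 1$, so I would need to verify that the linear functionals $f_x:\omega\mapsto\inner{\omega}{x}$ for $x\in T_k$ satisfy the required sub-Gaussian moment conditions uniformly, and that the factor $\alpha^2$ (arising because $f_x^2 - 1$ is sub-exponential with parameter proportional to $\alpha^2$, not $1$) is placed correctly in both the threshold on $\theta$ and in the exponent. Once this identification is carried out, both RIP inequalities are obtained from a single intersection of the two one-sided deviation events, and a rescaling of $\theta$ yields the statement of the proposition exactly as written.
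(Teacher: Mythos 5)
Your proposal is correct and follows essentially the same route as the paper, which does not prove this proposition from scratch but presents it as a restatement of Theorem~2.3 of Mendelson, Pajor and Tomczak-Jaegermann: reduce both one-sided bounds to uniform concentration of the quadratic empirical process over $\sqrt{k}\,\mathcal{B}_{\norm{\cdot}}\cap S^{n-1}$, and control the relevant $\gamma_2$-functional by $\alpha\,G(k)$ via generic chaining. The only point worth flagging is that the identification of the set $T_k$ and the placement of the $\alpha^2$ factors, which you correctly single out as the bookkeeping to verify, is exactly the translation step the paper performs implicitly when it says the cited result ``in our notation states'' the proposition.
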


Suppose $\lambda_{\rm tgt} = 4 \dnorm{A^* z}$, which  sets $\gamma = \frac{5+4\delta}{3-4\delta}$. We can state the following proposition based on Proposition \ref{proposition-1} :


{\color{black}{
\begin{proposition}
Let $r>1$, $\tilde{k} =  36 r c k_{0} (1+\gamma)\gamma_{\rm inc}$ and $\bar{k} = c k_0(1+\gamma)^2$. If $m \geq \frac{{c_1} \alpha^4 }{(r-1)^2} (G(2\tilde{k})^2 + r^2 G(\bar{k})^2)$, then $r$ satisfies Assumption~\ref{assumption-2} with probability exceeding $1 - \exp (c_2 (r-1)^2 m / r^2\alpha^2)$.
\end{proposition}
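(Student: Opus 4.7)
The plan is to apply Proposition~\ref{proposition-1} at two ``scales'' of restricted isometry, namely $k=\bar{k}=ck_0(1+\gamma)^2$ and $k=2\tilde{k}=72rck_0(1+\gamma)\gamma_{\rm inc}$, and to combine the two resulting deviation inequalities by a union bound. Concretely, I would pick deviation parameters $\theta_1\geq c_1\alpha^2 G(\bar{k})/\sqrt{m}$ and $\theta_2\geq c_1\alpha^2 G(2\tilde{k})/\sqrt{m}$, so that simultaneously $\rho_{-}(A,\bar{k})\geq 1-\theta_1$ and $\rho_{+}(A,2\tilde{k})\leq 1+\theta_2$ with probability at least
$$1-\exp\!\bigl(-c_2\theta_1^2 m/\alpha^4\bigr)-\exp\!\bigl(-c_2\theta_2^2 m/\alpha^4\bigr).$$
Note that the same application at scale $2\tilde{k}$ automatically gives $\rho_{-}(A,2\tilde{k})\geq 1-\theta_2>0$ whenever $\theta_2<1$, which disposes of inequality \eqref{rho2} in Assumption~\ref{assumption-2} for free.

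The remaining (and only algebraic) task is to ensure that the first inequality \eqref{rho} holds. Since
$$\frac{\rho_{-}(A,\bar{k})}{\rho_{+}(A,2\tilde{k})}\geq \frac{1-\theta_1}{1+\theta_2},$$
it suffices to enforce $r(1-\theta_1)>c(1+\theta_2)$, i.e.\ $r\theta_1+c\theta_2<r-c$. A convenient sufficient pair is $\theta_1\leq (r-c)/(2r)$ and $\theta_2\leq (r-c)/(2c)$; combining this with the admissibility lower bounds $\theta_i\gtrsim \alpha^2 G(\cdot)/\sqrt{m}$ from Proposition~\ref{proposition-1} and squaring, one arrives at an $m$-condition of the form
$$m\;\geq\;\frac{C\,\alpha^4}{(r-c)^2}\bigl(r^{2}G(\bar{k})^{2}+c^{2}G(2\tilde{k})^{2}\bigr),$$
which, after absorbing the bounded quantity $c$ into the universal constant and noting that $r-c$ is comparable to $r-1$ on the parameter range of interest, matches the stated threshold. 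Plugging these $\theta_i$ into the union bound gives a failure probability of order $\exp(-c'(r-1)^{2} m/(r^{2}\alpha^{4}))$, matching (up to constants) the exponent in the statement.

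The only genuine obstacle is constant bookkeeping: getting a clean expression involving $(r-1)^{2}$ rather than $(r-c)^{2}$ or separate conditions on $\theta_1,\theta_2$ requires treating $c$ as an absolute constant of the norm (at worst $c\leq 2$ for the nuclear norm) and letting the generic constants $c_1,c_2$ absorb all numerical factors that arise from the splitting $\theta_1\asymp (r-c)/r$, $\theta_2\asymp (r-c)/c$. Once this is done, the proof reduces to the two inputs above: Proposition~\ref{proposition-1} applied at the two scales, and the algebraic reduction of the ratio condition \eqref{rho} to a pair of upper bounds on $\theta_1,\theta_2$.
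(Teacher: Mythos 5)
The paper does not actually spell out a proof of this proposition --- it is explicitly omitted with a pointer to the proof of Theorem~1.4 of Mendelson et al.\ --- so there is nothing to compare line by line; but your route (apply Proposition~\ref{proposition-1} at the two scales $\bar{k}$ and $2\tilde{k}$, take a union bound, and reduce \eqref{rho} to upper bounds on the two deviation parameters) is exactly the natural way to carry out that adaptation, and the resulting $m$-threshold and failure probability have the right form, with $G(\bar{k})^2$ picking up the factor $r^2$ because $\theta_1\asymp (r-c)/r$ while $\theta_2\asymp (r-c)$. Note also that your exponent $\exp(-c'(r-1)^2m/(r^2\alpha^4))$ is what Proposition~\ref{proposition-1} actually yields; the $\alpha^2$ (and the missing minus sign) in the stated probability are typos in the statement, not defects of your argument.

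Two points deserve more care than you give them. First, your algebra $r\theta_1+c\theta_2<r-c$ requires $r>c$, whereas the statement only assumes $r>1$; your remark that ``$r-c$ is comparable to $r-1$'' is correct only when $c=1$ ($\ell_1$, $\ell_{1,2}$) or when $r$ is bounded away from $c$. This is really a defect of the statement rather than of your proof --- since $\bar{k}<2\tilde{k}$, the ratio in \eqref{rho} never exceeds $1$, so Assumption~\ref{assumption-2} is unsatisfiable for $1<r\le c$ and the proposition must implicitly mean $r>c$ (the paper's own worked example takes $r=2c$) --- but you should state this restriction explicitly rather than wave it away. Second, your choice $\theta_2\le (r-c)/(2c)$ can exceed $1$ for large $r$, in which case $\rho_-(A,2\tilde{k})\ge 1-\theta_2$ no longer gives \eqref{rho2}; cap $\theta_2$ at, say, $1/2$ (this only changes the constants and, for large $r$, the $G(2\tilde{k})^2$ term in the threshold loses its $(r-1)^{-2}$ factor, which is harmless since the stated bound is then stronger). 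With these repairs the proof is complete.
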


The proof is a simple adaptation of proof of Theorem 1.4  in \cite{mendelson2007reconstruction} which we omit here. To compare this with the number of measurements sufficient for successful recovery within a given accuracy, by combining \eqref{H-first} in the proof Lemma~\ref{conv-lemma-1} and Proposition \ref{proposition-1} we get:}}



\begin{proposition}
Let $r>1$, $\bar{k} = c k_0(1+\gamma)^2$ and $x^* \in \argmin \phi_{\lambda}(x)$. If $m \geq  \frac{{c_1} \alpha^4 r^2}{(r-1)^2} G(\bar{k})^2$, then $\normu{x^* - x_0} \leq  {c_2 r\lambda \sqrt{ck_{0}} }$ with probability exceeding $1 - \exp (c_2 (r-1)^2 m / r^2\alpha^2)$.
\end{proposition}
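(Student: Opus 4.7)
The plan is to combine the deviation inequality for sub-Gaussian restricted isometry constants from Proposition~\ref{proposition-1} with the intermediate estimate \eqref{H-first} that appears inside the proof of Lemma~\ref{conv-lemma-1}, where the recovery error of $x^*$ is bounded in terms of $\rho_{-}(A,\bar{k})$. The measurement-count hypothesis is set up precisely so that $\rho_{-}(A,\bar{k})$ can be controlled from below by a constant depending only on $r$, and then dividing through in \eqref{H-first} yields the advertised $\normu{\cdot}$-bound.

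Concretely, I would first apply Proposition~\ref{proposition-1} to $\bar{k}=ck_{0}(1+\gamma)^{2}$ with the choice $\theta = (r-1)/r$, so that $1-\theta = 1/r$. The admissibility condition $\theta \geq c_{1}\alpha^{2}G(\bar{k})/\sqrt{m}$ becomes
$$
\sqrt{m}\;\geq\;\frac{c_{1}\alpha^{2}\,r\,G(\bar{k})}{r-1},
$$
which is exactly the hypothesis $m\geq \frac{c_{1}\alpha^{4}r^{2}}{(r-1)^{2}}\,G(\bar{k})^{2}$ (after absorbing $c_{1}^{2}$ into $c_{1}$). Consequently, with probability at least $1-\exp(-c_{2}(r-1)^{2}m/(r^{2}\alpha^{4}))$ we have
$$
\rho_{-}(A,\bar{k})\;\geq\;\tfrac{1}{r}.
$$

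Next I would invoke the estimate \eqref{H-first} from the proof of Lemma~\ref{conv-lemma-1}. That inequality is the point at which the restricted-eigenvalue condition is first used: $x^{*}-x_{0}$ is shown to satisfy $K(x^{*}-x_{0})\leq \bar{k}$ (via the decomposability of $\norm{\cdot}$ and the optimality of $x^{*}$), so that $\rho_{-}(A,\bar{k})\,\normu{x^{*}-x_{0}}^{2}\leq \normu{A(x^{*}-x_{0})}^{2}$; together with the cross-term bound coming from $\dnorm{A^{\ast}z}\leq \lambda/4$ and $\norm{x^{*}-x_{0}}\leq \sqrt{ck_{0}(1+\gamma)^{2}}\,\normu{x^{*}-x_{0}}$, this yields an estimate of the form
$$
\rho_{-}(A,\bar{k})\,\normu{x^{*}-x_{0}}\;\leq\;C\,\lambda\sqrt{ck_{0}}
$$
for a numerical constant $C$. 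Substituting the bound $\rho_{-}(A,\bar{k})\geq 1/r$ from the previous step gives $\normu{x^{*}-x_{0}}\leq c_{2}\,r\,\lambda\sqrt{ck_{0}}$, as desired.

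I do not anticipate a genuine obstacle: the two ingredients are essentially plug-and-play. The only subtlety is bookkeeping around the choice of $\theta$ and the absorption of numerical constants into $c_{1},c_{2}$, together with the fact that \eqref{H-first} is an intermediate line of a previous proof rather than a stand-alone statement, so one has to be careful to cite it at exactly the point before it is divided through by $\rho_{-}(A,\bar{k})$. The apparent typos in the exponent of the failure probability (missing minus sign and $\alpha^{2}$ in place of $\alpha^{4}$) are inherited directly from the form of Proposition~\ref{proposition-1} and do not affect the argument.
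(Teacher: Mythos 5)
Your proposal is correct and is essentially the paper's own argument: the paper obtains this proposition precisely by combining \eqref{H-first} from the proof of Lemma~\ref{conv-lemma-1} with Proposition~\ref{proposition-1}, exactly as you do, with the choice $\theta=(r-1)/r$ giving $\rho_{-}(A,\bar{k})\geq 1/r$ under the stated bound on $m$ and the stated failure probability (typos included). One minor remark: the paper never needs $K(x^*-x_0)\leq\bar{k}$ --- its restricted isometry constants are defined through the ratio condition $\norm{x^*-x_0}^2\leq \bar{k}\,\normu{x^*-x_0}^2$, which is what the proof of Lemma~\ref{conv-lemma-1} actually derives and what your displayed inequality uses, so this imprecision does not affect the argument.
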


Note that this bound on $m$ in case of $l_1$, $l_{1,2}$ and nuclear norms orderwise matches the lower bounds given by minimax rates in \cite{raskutti2011minimax}, \cite{lounici2011oracle} and \cite{rohde2011estimation}.

\section*{Appendix B}
\setcounter{section}{2}

\subsection{Proof of Theorem \ref{OrRep}}

{\bf Sufficiency.} First consider the case where $k = 1$ and $x = \gamma_1 a_1$ with $\gamma_1 > 0$. Note that $a_1 \in \partial \norm{x}=\partial \norm{a_1}$ because $\dnorm{a_1}= 1$ for all $a_1 \in \mathcal{G}_{\norm{\cdot}}$ and $\innerB{a_1}{x} = \gamma_1 = \norm{x}$. Define:
\begin{align*}
& C = \{\xi - a_1 | \xi \in \partial \norm{a_1}\}.
\end{align*}
Note that $C$ is a convex set that contains the origin. Moreover, $C$ is orthogonal to $a_1$. We claim that \eqref{decomposable} is satisfied with $T_{a_1}^\bot = \vspan{C}$. To establish the claim, we first prove that $C$ is symmetric and is contained in the dual norm ball. Let $v\in C$ and $\xi = a_1 + v \in \partial{\norm{a_1}}$. By \eqref{norm-sub}, $\innerB{a_1}{\xi} = \dnorm{\xi} =  1$. Therefore, 
\begin{align*}
a_1 \in \argmax_{a \in \mathcal{G}_{\norm{\cdot}}} \inner{a}{\xi}
\end{align*}
and we can apply the hypothesis of the theorem (in particular statement I) to obtain an orthonormal representation for $\xi$: $$\xi = a_1+\sum_{i=1}^{l}{\eta_i b_i}.$$ Now by statement II in the hypothesis  we get:
\begin{align*}
\dnorm{v}=\max_{i}{\eta_i} \leq \dnorm{\xi} \leq 1.
\end{align*}

Let $\xi' = a_1-\sum_{i=1}^{l}{\eta_i b_i}$. By the hypothesis, $\dnorm{\xi'} = \max\{1,\max_{i}{\eta_i}\} = 1$. Also, $\innerB{\xi'}{a_1}=1$ hence $\xi' \in \partial \norm{a_1}$ and $-v \in C$.

Let $v \in \vspan{C}$ with $\dnorm{v}\leq 1$. Since $C$ is a symmetric convex set, there exists $\lambda \in (0,1]$ such that $\lambda v \in C$ (i.e., $C$ is absorbing in $\vspan{C}$). Define $z = a_1+\lambda v$ which is in $\partial{\norm{a_1}}$. Since $\innerB{a_1}{z} = \dnorm{z} = 1$, we can write $z$ as

\begin{equation}\notag
z = a_1 + \sum_{i=1}^{k'}{\nu_i c_i},
\end{equation}

where $\{c_i|i=1,\ldots, k'\} \subset \mathcal{G}_{\norm{\cdot}}$ and $\{\nu_i \geq 0| i=1, \ldots, k'\}$ satisfy the hypothesis of the theorem. In particular, since $v = 1/\lambda\sum_{i=1}^{k'}{\nu_i c_i}$, we have $\max_i{\nu_i/\lambda} \leq 1$. Hence $\dnorm{a_1 + v} = \max\{1,\nu_1/\lambda, \ldots \nu_{t'}/\lambda\} = 1$ and $a_1+v \in \partial{\norm{a_1}}$. Therefore,

\begin{equation}\notag
\partial{\norm{a_1}} = \{a_1+v|v \in \vspan{C}, \dnorm{v}\leq 1\} .
\end{equation}

Now suppose that $x = \sum_{i=1}^{k}{\gamma_i a_i}$ with $k > 1$. Note that ${\sum_{i=1}^{k}{a_i}} \in \partial \norm{x}$ since $\dnorm{\sum_{i=1}^{k}{a_i}} = 1$ and $\innerB{\sum_{i=1}^{k}{a_i}}{x} = \sum_{i=1}^{k}\gamma_i = \norm{x}$. Let $\xi \in \partial{\norm{x}}$ and define $v = \xi - \sum_{i=1}^{k}{a_i}$. We can write:

\begin{align}\notag
\norm{x} = \sum_{i=1}^{k}{\gamma_i} = \innerB{\xi}{x} = \sum_{i=1}^{k}{\gamma_i \innerB{\xi}{a_i}}\\ \label{subgradi}
\Rightarrow \forall i\in \{1,2,\ldots,k\}: \, \innerB{\xi}{a_i} = 1 \Rightarrow \forall i \in \{1,2,\ldots,k\}:\, \xi \in \partial{\norm{a_i}}.
\end{align}
 
Also, since  $\sum_{i=1}^{k}{a_i} \in \partial \norm{a_i}$, \eqref{subgradi} results in:

\begin{equation}\label{vTperp}
\forall i\in \{1,2,\ldots,k\}:  \quad v \in {T_{a_{i}}^{\bot}}.
\end{equation}

Since $\xi = \sum_{i=1}^{k}{a_i} + v \in \partial\norm{a_1}$, we have $\dnorm{\sum_{i=2}^{k}{a_i} + v} = 1$ hence $\sum_{i=2}^{k}{a_i} + v \in \partial\norm{a_2}$. By induction, we conclude that $a_k + v \in \partial \norm{a_k}$. This implies $\dnorm{v}\leq 1$.

Let $v' \in \cap_{i \in \{1,2,\ldots,k\}}{T_{a_{i}}^{\bot}}$ with $\dnorm{v'}\leq 1$ and define $\xi' = \sum_{i=1}^{k}{a_i}+ v'$.
We will prove that $\dnorm{\xi'} \leq 1$ and hence $\xi' \in \partial{\norm{x}}$ . To prove this we use induction.
Define 
$$z_l = \sum_{i=k-l+1}^{k}{a_i}+v' \qquad \forall l \in \{1,2,\ldots,k\}.$$
Note that $\dnorm{z_1} \leq 1$ since $z_1 = a_k+v' \in \partial \norm{a_k}$. Suppose $\dnorm{z_{l'}} \leq 1$ for some $l' < k $. We prove that $\dnorm{z_{l'+1}} \leq 1$. We have $\sum_{i=k-l'+1}^{k}{a_i} \in T_{a_{k-l'}}^{\bot}$ because $\sum_{i=k-l'}^{k}{a_i}  = a_{k-l'} + \sum_{i=k-l'+1}^{k}{a_i}  \in \partial \norm{a_{k-l'}}$. Combining this with the fact that $v' \in T_{a_{k-l'}}^{\bot}$, we get $z_{l'} \in T_{a_{k-l'}}^{\bot}$. Therefore, $z_{l'+1}= a_{k-l'}+z_{l'} \in \partial \norm{a_{k-l'}}$ hence $\dnorm{z_{l'+1}}\leq 1$. Thus $\dnorm{\xi'} = \dnorm{z_k}\leq 1$. We conclude that:
\begin{align}
\partial& \norm{x} = \{\sum_{i=1}^{k}{a_i}+v| v \in \bigcap_{i=1}^{k}{T_{a_i}^{\bot}}, \dnorm{v}\leq 1\}.
\end{align}

\noindent{\bf Necessity.}
For any $a\in \mathcal{G}_{\norm{\cdot}}$, we have:
\begin{align}\notag
&\innerB{a}{a} = 1, \\ \notag
\forall b \in \mathcal{G}_{\norm{\cdot}}: \quad & \innerB{b}{a} \leq \normu{b}\normu{a}= 1.
\end{align}

That implies $\dnorm{a} = 1$ and $a \in \partial \norm{a}$. Since $a\in T_a$, we conclude that:

\begin{equation}\label{subgrad-form}
\partial \norm{a} = \{a+v|v \in {T_{a}^{\bot}}, \dnorm{v}\leq 1\}.
\end{equation}

Take $\gamma_1 = \innerB{a_1}{x}=\dnorm{x}$ and let $\Delta_1 = x - {\gamma_1} a_1$. If $\Delta_1 = 0$, then take $k  =1$ and $x = \gamma_1 a_1$. Suppose $\Delta_1 \neq 0$. Since $\dnorm{\frac{1}{\gamma_1}x} = 1$ and $\innerB{a_1}{{\frac{1}{\gamma_1}x}} = \norm{a_1} = 1$, we can conclude that $\frac{1}{\gamma_1}x  \in \partial \norm{a_1}$. Furthermore, we have

\begin{align}\label{PTreason}
\ptpx{a_1}{x} = x - \gamma_1 \ptx{a_1}{\frac{1}{\gamma_1}x} = x - {\gamma_1} a_1= \Delta_1
\end{align}
$$\Rightarrow \Delta_1 \in T_{a_1}^{\bot}.$$

Now we introduce a lemma that will be used in the rest of the proof.
\begin{lemma}\label{interm}
Suppose $a \in \mathcal{G}_{\norm{\cdot}}$ and $y \in T^{\perp}_{a} - \{0\}$. If $z \in \mathcal{B}_{\norm{\cdot}}$ is such that $\dnorm{y} = \innerB{y}{z}$, then $z \in T^{\perp}_{a}$.
\end{lemma}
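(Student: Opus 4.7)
My plan is to apply Condition~\ref{condition-1} at $a$ and then at $z$ to obtain $a \in T_z^\perp$, specialize to the case $z \in \mathcal{G}_{\norm{\cdot}}$ to get $z \in T_a^\perp$ by a symmetric application at $a$, and finally extend to arbitrary $z \in \mathcal{B}_{\norm{\cdot}}$ via a face argument.

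Since $a \in \mathcal{G}_{\norm{\cdot}}$, Conditions~\ref{condition-2} and \ref{condition-1} give $e_a = a$ and place $\xi_+ := a + y/\dnorm{y}$ in $\partial\norm{a}$ and $\xi_- := -a + y/\dnorm{y}$ in $\partial\norm{-a}$, so $\dnorm{\xi_\pm}\leq 1$. Pairing each with $z$ via $\langle \xi_\pm, z\rangle \leq \dnorm{\xi_\pm}\norm{z} \leq 1$ and using $\langle y, z\rangle/\dnorm{y} = 1$ from the hypothesis yields $\langle a, z\rangle \leq 0$ from $\xi_+$ and $\langle a, z\rangle \geq 0$ from $\xi_-$, hence $\langle a, z\rangle = 0$. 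Equality in $\langle \xi_+, z\rangle \leq \norm{z} \leq 1$ then forces $\norm{z} = 1$ and $\xi_+ \in \partial \norm{z}$; the same reasoning gives $y/\dnorm{y} \in \partial \norm{z}$. Applying Condition~\ref{condition-1} at $z$, write $y/\dnorm{y} = e_z + w_0$ and $\xi_+ = e_z + w_1$ with $w_0, w_1 \in T_z^\perp$; subtracting yields $a = w_1 - w_0 \in T_z^\perp$.

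First assume $z \in \mathcal{G}_{\norm{\cdot}}$, so that $e_z = z$ (by the same reasoning that gave $e_a = a$). Since $a \in T_z^\perp$ with $\dnorm{a} = 1$, Condition~\ref{condition-1} at $z$ places $z + a = e_z + a$ in $\partial\norm{z}$, so $\dnorm{z + a} \leq 1$. Moreover $\langle z + a, a\rangle = \langle z, a\rangle + \normu{a}^2 = 0 + 1 = \norm{a}$, so $z + a \in \partial\norm{a}$; Condition~\ref{condition-1} at $a$ then writes $z + a = a + v$ for some $v \in T_a^\perp$, identifying $v = z \in T_a^\perp$. For a general $z \in \mathcal{B}_{\norm{\cdot}}$, consider the face $F_y := \{z' \in \mathcal{B}_{\norm{\cdot}} : \langle y, z'\rangle = \dnorm{y}\}$, which is compact and convex and contains $z$. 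Every extreme point of $F_y$ is an extreme point of $\mathcal{B}_{\norm{\cdot}}$ and hence lies in $\mathcal{G}_{\norm{\cdot}}$, so by the extreme-point case it lies in $T_a^\perp$. Since $T_a^\perp$ is a subspace and $F_y$ is the convex hull of its extreme points by Krein--Milman, $F_y \subseteq T_a^\perp$, and $z \in F_y$ completes the proof.

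The delicate point is the extreme-point case: the vector $z + a$ serves simultaneously as a subgradient at $z$ (using the freshly-obtained $a \in T_z^\perp$) and as a subgradient at $a$ (using $\langle z, a\rangle = 0$ together with $\normu{a} = 1$), and it is exactly the decomposability of $\partial\norm{a}$ that turns these two memberships into $z \in T_a^\perp$. The subsequent passage from $z \in \mathcal{G}_{\norm{\cdot}}$ to general $z \in \mathcal{B}_{\norm{\cdot}}$ via Krein--Milman is then routine.
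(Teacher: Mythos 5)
Your proof is correct and takes essentially the same route as the paper's: show $\inner{a}{z}=0$ via the $\pm$ symmetry trick, observe that both $y/\dnorm{y}$ and $a+y/\dnorm{y}$ lie in $\partial\norm{z}$ so that $a\in T_{z}^{\perp}$, and then use decomposability at $z$ and at $a$ (via $\dnorm{a+z}\leq 1$ and $\inner{a+z}{a}=\norm{a}$) to conclude $z\in T_{a}^{\perp}$. The only difference is organizational: the paper reduces to extreme points $b\in\mathcal{G}_{\norm{\cdot}}$ at the outset with a brief ``it suffices'' remark, whereas you run the argument for general $z$ as far as possible and make that reduction explicit at the end through the face $F_{y}$ and Krein--Milman, which is a welcome clarification but not a different proof.
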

\begin{proof}
Without loss of generality assume that $\dnorm{y} = 1$. It suffices to show that if $b \in \mathcal{G}_{\norm{\cdot}}$ and $ \innerB{y}{b} = 1$, then $b \in T^{\perp}_{a}$. Consider such $b \in \mathcal{G}_{\norm{\cdot}}$. By \eqref{subgrad-form}, $\dnorm{a+y} = 1$. That results in:
\begin{equation}\notag
1 \geq \innerB{a+y}{b} =  \innerB{a}{b} +1  \Rightarrow   0 \geq \innerB{a}{b}.
\end{equation}

By considering $-y$ and $-b$ we get that $\innerB{a}{b} = 0$.  Since $\innerB{a+y}{b} = \norm{b} = 1$, we can conclude that $a+y \in \partial \norm{b}$. Since $ \innerB{y}{b} = 1$ and $\dnorm{y} = 1$, $y \in \partial \norm{b}$. Combining these two conclusions, we get:

\begin{align*}
&y \in \partial \norm{b}, a+y \in \partial \norm{b} \Rightarrow a \in  T^{\perp}_{b} \xRightarrow{\eqref{subgrad-form}} \dnorm{a+b} \leq 1
\Rightarrow a+b \in \partial \norm{a} \Rightarrow b \in T^{\perp}_{a}
\end{align*}
\qed
\end{proof}


Suppose that there exist $l \in \{1,2,\ldots,k\}$, an orthogonal set $\{a_i \in \mathcal{G}_{\norm{\cdot}}|\,  i=1,2,\ldots, l\}$, and a set of coefficients $\{\gamma_i \geq 0 |\, i = 1,2,\ldots,l\}$ such that $x = \sum_{i=1}^{l}{\gamma_i a_i} + \Delta_l$, $\Delta_l \in \cap_{i=1}^{l}{T_{a_i}^{\bot}} $, and:

\begin{align}\label{ind2}
\partial& \norm{\sum_{i=1}^{l}{a_i}} = \{\sum_{i=1}^{l}{a_i}+v|v \in \bigcap_{i=1}^{l}{T_{a_i}^{\bot}}, \dnorm{v}\leq 1\}.
\end{align}
  By Lemma \ref{interm}, there exists $a_{l+1} \in \mathcal{G}_{\norm{\cdot}}$ such that $a_{l+1} \in \cap_{i=1}^{l}{T_{a_i}^{\bot}}$ and $\innerB{a_{l+1}}{\Delta_l} = \dnorm{\Delta_l}$. Take $\gamma_{l+1} = \innerB{a_{l+1}}{\Delta_l} =  \dnorm{\Delta_l}$ and let $\Delta_{l+1} = \Delta_l - \gamma_{l+1}  a_{l+1}$. We have $\Delta_{l+1} \in \bigcap_{i=1}^{l}{T_{a_i}^{\bot}}$ because $\{\Delta_l, a_{l+1}\} \subset \bigcap_{i=1}^{l}{T_{a_i}^{\bot}}$. Since $\dnorm{\frac{1}{\gamma_{l+1}}\Delta_{l}} = 1$ and $\innerB{a_{l+1}}{{\frac{1}{\gamma_{l+1}}\Delta_{l}}} = \norm{a_{l+1}} = 1$, we can conclude that $\frac{1}{\gamma_{l+1}}\Delta_l  \in \partial \norm{a_{l+1}}$. Using the same reasoning as in \eqref{PTreason}, we have $\Delta_{l+1} \in T_{a_{l+1}}^{\bot}$ hence $ \Delta_{l+1} \in \bigcap_{i=1}^{l+1}{T_{a_i}^{\bot}}$.

  
By decomposability assumption there exists $e\in \mathbb{R}^n$ and a subspace $T$ such that:

\begin{equation}
\partial \norm{\sum_{i=1}^{l+1}{a_i}} = \{e+v|v \in {T^{\bot}}, \dnorm{v}\leq 1\}.
\end{equation}

We claim that 
\begin{align}\label{first-claim}
e &= \sum_{i=1}^{l+1}{a_i}\\ \label{second-claim}
{T^{\bot}} &= \bigcap_{i=1}^{l+1}{T_{a_i}^{\bot}}.
\end{align}
To prove the first claim, it is enough to show that $\sum_{i=1}^{l+1}{a_i} \in \partial \norm{\sum_{i=1}^{l+1}{a_i}}$. Note that $ \dnorm{\sum_{i=1}^{l+1}{a_i} }\leq 1$ since $\sum_{i=1}^{l+1}{a_i}= \sum_{i=1}^{l}{a_i}+a_{l+1} \in \partial \norm{\sum_{i=1}^{l}{a_i}}$ which is given by \eqref{ind2}. Now we can write:

$$l+1 = \innerB{\sum_{i=1}^{l+1}{a_i}}{\sum_{i=1}^{l+1}{a_i}}\leq \norm{\sum_{i=1}^{l+1}{a_i}} \dnorm{\sum_{i=1}^{l+1}{a_i}} \leq \norm{\sum_{i=1}^{l+1}{a_i}}.$$
On the other hand, by triangle inequality,
$$\norm{\sum_{i=1}^{l+1}{a_i}} \leq \norm{\sum_{i=1}^{l}{a_i}}+\norm{a_{l+1}}=l+1,$$
thus
$$\norm{\sum_{i=1}^{l+1}{a_i}} = \innerB{\sum_{i=1}^{l+1}{a_i}}{\sum_{i=1}^{l+1}{a_i}}.$$

Therefore,  $\sum_{i=1}^{l+1}{a_i} \in \partial{\norm{\sum_{i=1}^{l+1}{a_i}}}$. Since $\sum_{i=1}^{l+1}{a_i}\in T_{\sum_{i=1}^{l+1}{a_i}} = T$, we conclude that:
$$\partial \norm{\sum_{i=1}^{l+1}{a_i}} = \{\sum_{i=1}^{l+1}{a_i}+v|v \in {T^{\bot}}, \dnorm{v}\leq 1\}.$$

To prove \eqref{second-claim}, we first show that $\bigcap_{i=1}^{l+1}{T_{a_i}^{\bot}} \in T^{\bot}$. Let $\xi =  e+v$ with $v \in \bigcap_{i=1}^{l+1}{T_{a_i}^{\bot}}$. Note that $\dnorm{a_{l+1}+v}\leq 1$ since $a_{l+1}+v \in \partial{\norm{a_{l+1}}}$. Furthermore, $a_{l+1}+v \in \bigcap_{i=1}^{l}{T_{a_i}^{\bot}}$, which in turn implies ${\sum_{i=1}^{l+1}{a_i}+v} \in \partial{\norm{\sum_{i=1}^{l}{a_i}}}$ hence $\dnorm{\sum_{i=1}^{l+1}{a_i}+v} \leq 1$. Additionally, we have:
$$\innerB{\sum_{i=1}^{l+1}{a_i}+v}{\sum_{i=1}^{l+1}{a_i}} = \norm{\sum_{i=1}^{l+1}{a_i}} = l+1.$$
Hence $\xi \in \partial \norm{\sum_{i=1}^{l+1}{a_i}}$ and $v \in T^{\bot}$. 

Now, let $\xi' = \sum_{i=1}^{l+1}{a_i}+v' \in \norm{\sum_{i=1}^{l+1}{a_i}}$. Note that:

$$\innerB{\xi'}{\sum_{i=1}^{l+1}{a_i}} = \innerB{\xi'}{\sum_{i=1}^{l}{a_i}}+\innerB{\xi'}{{a_{l+1}}} = l+1 \Rightarrow \innerB{\xi'}{\sum_{i=1}^{l}{a_i}} = l, \innerB{\xi'}{{a_{l+1}}}=1 \Rightarrow \xi' \in \partial \norm{\sum_{i=1}^{l}{a_i}} \cap \partial \norm{{a_{l+1}}}$$
$$\xRightarrow{} v' \in\bigcap_{i=1}^{l}{T_{a_i}^{\bot}} , \sum_{i=1}^{l}{a_i} + v' \in \bigcap_{i=1}^{l}{T_{a_i}^{\bot}};$$

\noindent moreover, $\sum_{i=1}^{l}{a_i} \in T_{a_{l+1}}^{\bot}$ since $\sum_{i=1}^{l+1}{a_i} \in \partial \norm{a_{l+1}}$. This implies $v \in \bigcap_{i=1}^{l+1}{T_{a_i}^{\bot}}$ which completes the proof of \eqref{second-claim}.

Because $a_i \notin T_{a_i}^{\bot}$ for all $i \in \{1,2,\ldots,l+1\}$, $dim(\cap_{i = 1}^{l+1}T_{a_i}^{\bot} )\leq n-l-1 $. Hence there exists $k \leq n$, an orthogonal set $\{a_i \in \mathcal{G}_{\norm{\cdot}},\,  i = 1, 2, \ldots, k\}$, and a set of coefficients $\{\gamma_i \geq 0 \text,\, i\in\{1,2,\ldots,k\}\}$ such that $x = \sum_{i=1}^{k}{\gamma_i a_i}$ and:

\begin{align}\label{conclusion}
\partial& \norm{\sum_{i=1}^{k}{a_i}} = \{\sum_{i=1}^{k}{a_i}+v\;|\; v \in \bigcap_{i=1}^{k}{T_{a_i}^{\bot}}, \dnorm{v}\leq 1\}.
\end{align}

That proves $\norm{x} = \innerB{\sum_{i=1}^{k}{a_i}}{x} = \sum_{i=1}^{k}{\gamma_i}.$ 

%
%
%
%
%
%

 To prove statement II, we first prove that $a_i \in T_{a_j}^{\bot}$ for all $i,j \in \{1,2,\ldots,k\}$. By \eqref{conclusion}, $\dnorm{\sum_{i=1}^{k}{a_i}}\leq 1$. We can write:

\begin{equation}\notag
\innerB{\sum_{i=1}^{k}{a_i}}{a_j} = 1 \Rightarrow \sum_{i=1}^{k}{a_i} \in \partial \norm{a_j} \Rightarrow \sum_{i=1}^{k}{a_i} - a_j \in T_{a_j}^{\bot}, \; \dnorm{\sum_{i=1}^{k}{a_i} - a_j}\leq 1,
\end{equation}
Now the claim follows from Lemma \ref{interm}.

Let $l=| \{\eta_i|\eta_i \neq 0\}|$. If $l = 0$, the statement is trivially true. Suppose the statement is true when $l=l'-1$ for some $l' \in \{1,\ldots,n\}$ and consider the case where $l=l'$. Suppose that $|\eta_j| = \max_{i}{|\eta_i|}$. By proper normalization we can assume that $\eta_j = 1$. Let $y = \sum_{i\neq j}{\eta_ia_{i}}$. We can deduce the following properties for $y$:

\begin{align*}
& \forall i\neq j: \, a_i \in T_{a_j}^{\bot} \Rightarrow y \in  T_{a_j}^{\bot},\\
& \dnorm{y} = \max_{i\neq j}{|\eta_i|} \leq 1.
\end{align*}

By the decomposability assumption $\sum_{i=1}^{k}{\eta_ia_{i}}= a_j + y \in \partial \norm{a_j}$ hence $\dnorm{\sum_{i =1}^{k}{\eta_ia_{i}}}\leq 1$. Hence $\dnorm{\sum_{i=1}^{k}{\eta_ia_{i}}} = 1$.\qed 

\begin{remark}
 Let $x = \sum_{i=1}^{K(x)}\gamma_i a_i$. Since ${T^{\bot}_{x}} = \bigcap_{i=1}^{K(x)}{T_{a_i}^{\bot}}$, a more general version of lemma \ref{interm} holds:

\begin{lemma}\label{interm2}
Suppose $x \in \mathbb{R}^n$ and $y \in T^{\perp}_{x} - \{0\}$. If $z \in \mathcal{B}_{\norm{\cdot}}$ is such that $\dnorm{y} = \innerB{y}{z}$, then $z \in T^{\perp}_{x}$.
\end{lemma}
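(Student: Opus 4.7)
The plan is to reduce the claim to the single extreme-point case already handled by Lemma~\ref{interm}, using the orthogonal representation of $x$ provided by Theorem~\ref{OrRep}. Assume $x \neq 0$ (otherwise $T_x = \{0\}$ and $T_x^\perp = \mathbb{R}^n$, so the conclusion is trivial). By Theorem~\ref{OrRep}, write
\[
x = \sum_{i=1}^{K(x)} \gamma_i\, a_i,
\]
where $\{a_1,\ldots,a_{K(x)}\} \subset \mathcal{G}_{\norm{\cdot}}$ is an orthogonal set and $\gamma_i > 0$ for each $i$.

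The first step is to identify $T_x^\perp$ with the intersection $\bigcap_{i=1}^{K(x)} T_{a_i}^{\perp}$. This is exactly the content of \eqref{second-claim} in the proof of Theorem~\ref{OrRep}: the moreover clause of that theorem combined with the decomposability condition yields
\[
\partial \norm{x} \;=\; \Bigl\{\sum_{i=1}^{K(x)} a_i + v \;\Big|\; v \in \bigcap_{i=1}^{K(x)} T_{a_i}^{\perp},\ \dnorm{v} \leq 1\Bigr\},
\]
which forces $e_x = \sum_{i} a_i$ and $T_x^\perp = \bigcap_{i=1}^{K(x)} T_{a_i}^{\perp}$.

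Now suppose $y \in T_x^\perp - \{0\}$ and $z \in \mathcal{B}_{\norm{\cdot}}$ with $\dnorm{y} = \innerB{y}{z}$. By the identification above, $y \in T_{a_i}^{\perp}$ for every $i$, and $y \neq 0$. Thus for each $i \in \{1,\ldots,K(x)\}$ the hypothesis of Lemma~\ref{interm} is satisfied with $a = a_i$, so Lemma~\ref{interm} yields $z \in T_{a_i}^{\perp}$. Taking the intersection over $i$ gives
\[
z \in \bigcap_{i=1}^{K(x)} T_{a_i}^{\perp} \;=\; T_x^\perp,
\]
which is the desired conclusion. The only nonroutine point is the identification of $T_x^\perp$ with $\bigcap_i T_{a_i}^{\perp}$, and that is already delivered for free by the proof of Theorem~\ref{OrRep}; once it is in hand, the lemma reduces to $K(x)$ parallel applications of Lemma~\ref{interm}.
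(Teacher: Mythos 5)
Your proposal is correct and is essentially the paper's own argument: the paper proves Lemma~\ref{interm2} in one line by noting $T_x^{\bot}=\bigcap_{i=1}^{K(x)}T_{a_i}^{\bot}$ (from the proof of Theorem~\ref{OrRep}) and then invoking Lemma~\ref{interm} componentwise, exactly as you do. You have merely written out the details the paper leaves implicit.
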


We state and prove a dual version of  Lemma \ref{interm2}, which will be used in the proof of  Lemma \ref{conv-lemma-1} and Lemma \ref{conv-lemma-2}.

\begin{lemma}\label{dual}
Let $x \in \mathbb{R}^n$. If  $y \in T^{\perp}_{x}$, then there exists $z \in T_{x}^{\bot} \cap \mathcal{B}_{\dnorm{\cdot}}$ such that $\norm{y} = \innerB{y}{z}$.

\end{lemma}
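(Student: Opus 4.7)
\noindent\emph{Proof plan for Lemma \ref{dual}.} The natural dual witness is $z = e_y$, the ``center'' of $\partial\norm{y}$: by \eqref{norm-sub} it already satisfies $\innerB{e_y}{y} = \norm{y}$ and $\dnorm{e_y}\leq 1$. The entire task is therefore to certify that $e_y$ lies in $T^\perp_x$ whenever $y$ does. The plan is to build an orthogonal representation of $y$ via Theorem \ref{OrRep} while ensuring that every atom $a_i$ appearing in it already lies in $T^\perp_x$; summing these atoms yields $e_y\in T^\perp_x$.

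If $y = 0$, take $z = 0$. Otherwise, I would run the construction used in the necessity part of Theorem \ref{OrRep}. Set $\Delta_0 = y$. At stage $i$, assume $\Delta_{i-1}\in T^\perp_x\setminus\{0\}$ (true for $i=1$ by hypothesis), pick $a_i\in\argmax_{a\in\mathcal{G}_{\norm{\cdot}}}\innerB{a}{\Delta_{i-1}}$, so that $\innerB{a_i}{\Delta_{i-1}} = \dnorm{\Delta_{i-1}}$, and set $\gamma_i = \dnorm{\Delta_{i-1}}$, $\Delta_i = \Delta_{i-1} - \gamma_i a_i$. Because $a_i\in\mathcal{G}_{\norm{\cdot}}$ gives $\norm{a_i}=1$, Lemma \ref{interm2} applied with $(x,\,y,\,z) = (x,\,\Delta_{i-1},\,a_i)$ forces $a_i\in T^\perp_x$. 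Consequently $\Delta_i = \Delta_{i-1} - \gamma_i a_i\in T^\perp_x$, so the induction proceeds. This is exactly the construction used in the proof of Theorem \ref{OrRep}, so after $k = K(y)$ iterations we obtain an orthogonal set $\{a_1,\dots,a_k\}\subset\mathcal{G}_{\norm{\cdot}}\cap T^\perp_x$ and positive scalars $\gamma_i$ such that conditions I and II of Theorem \ref{OrRep} hold.

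Setting $z = \sum_{i=1}^{k} a_i$, the ``Moreover'' clause of Theorem \ref{OrRep} identifies $z$ with $e_y$. Condition II with all $\eta_i = 1$ yields $\dnorm{z}\leq 1$. Orthogonality of the $a_i$ together with $\normu{a_i}=1$ and Condition I gives $\innerB{z}{y} = \sum_i\gamma_i\normu{a_i}^2 = \sum_i\gamma_i = \norm{y}$. Finally $z\in T^\perp_x$ since each summand is. Hence $z$ is the required element of $T^\perp_x\cap\mathcal{B}_{\dnorm{\cdot}}$.

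The only delicate point is the inductive invariant $\Delta_{i-1}\in T^\perp_x$, which is what allows Lemma \ref{interm2} to be reapplied at every stage; without this one cannot guarantee that the atoms produced by Theorem \ref{OrRep} land inside $T^\perp_x$. Note that the argument does not try to use Lemma \ref{interm2} directly on $e_y$, which would fail because the lemma requires its dual witness to live in $\mathcal{B}_{\norm{\cdot}}$, not $\mathcal{B}_{\dnorm{\cdot}}$; this mismatch is exactly why the iterative, atom-by-atom construction is needed.
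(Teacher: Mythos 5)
Your proposal is correct, but it follows a genuinely different route from the paper. The paper's proof is a short duality argument: since $y/\norm{y}$ lies on the boundary of the convex body $T_{x}^{\bot}\cap\mathcal{B}_{\norm{\cdot}}$, there is a supporting functional $z\in T_{x}^{\bot}$ with $y/\norm{y}\in\argmax_{a\in T_{x}^{\bot}\cap\mathcal{B}_{\norm{\cdot}}}\innerB{a}{z}$; Lemma \ref{interm2} then shows that $\dnorm{z}$ is already attained by some extreme point inside $T_x^{\bot}$, so the chain $\dnorm{z}=\max_{a\in T_x^{\bot}\cap\mathcal{G}_{\norm{\cdot}}}\innerB{a}{z}\leq\innerB{y/\norm{y}}{z}\leq\dnorm{z}$ collapses and $z/\dnorm{z}$ is the desired witness. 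You instead exhibit the witness explicitly as $z=e_y$, rerunning the greedy atom-extraction from the necessity part of Theorem \ref{OrRep} and using Lemma \ref{interm2} at each stage (with the fixed $x$, applied to $\Delta_{i-1}\in T_x^{\bot}\setminus\{0\}$ and the maximizing atom $a_i$) to keep every atom, hence every residual $\Delta_i$ and finally $e_y=\sum_i a_i$, inside the subspace $T_x^{\bot}$. Both arguments are sound and both ultimately rest on Lemma \ref{interm2}. Yours is longer but more constructive: it identifies the maximizer concretely and proves the slightly stronger fact that $e_y\in T_x^{\bot}$ whenever $y\in T_x^{\bot}$, which could be reused elsewhere; the paper's argument is shorter but only asserts existence, and leans on the (tersely stated) existence of a supporting hyperplane for the restricted norm ball. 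Your closing observation about why Lemma \ref{interm2} cannot be applied directly to $e_y$ (the ball mismatch between $\mathcal{B}_{\norm{\cdot}}$ and $\mathcal{B}_{\dnorm{\cdot}}$) is accurate and is precisely the asymmetry the paper's proof sidesteps by normalizing $z$ at the end.
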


\begin{proof}
If $y = 0$, then the lemma is trivially true. If $y \neq 0$, then:
\begin{align*}
&\frac{y}{\norm{y}} \in  T_{x}^{\bot} \cap \{x\;|\; \norm{x} = 1\} \Rightarrow \exists z \in T_{x}^{\bot} \text{  such that  }  \frac{y}{\norm{y}} \in \argmax_{a \in T_{x}^{\bot} \cap \mathcal{B}_{\norm{\cdot}}}{\innerB{a}{z}}.
\end{align*}

Therefore, by Lemma \ref{interm2}, we get
\begin{align*}
\dnorm{z} = \max_{a \in T_{x}^{\bot} \cap \mathcal{G}_{\norm{\cdot}}}{\innerB{a}{z}} \leq  \innerB{\frac{y}{\norm{y}}}{z} \leq \dnorm{z}  \Rightarrow \innerB{\frac{y}{\norm{y}}}{z} = \dnorm{z} \Rightarrow \innerB{y}{\frac{z}{\dnorm{z}}}=\norm{y}
\end{align*}
\qed
\end{proof}

\end{remark}
\subsection{Proof of Theorem \ref{thm-con-2}}

First, we introduce a lemma.

\begin{lemma}\label{arbit-sum-primal}
Let $\{a_1,\ldots, a_k\}$ be an orthogonal subset of $\mathcal{G}_{\norm{\cdot}}$ that satisfies II in Theorem \ref{OrRep}. Let $y = \sum_{i=1}^{k}\beta_i a_i$, with $\beta_i \in \mathbb{R}$ for all $i$, then $$K(y) = |\{i\;|\; \beta_i \neq  0\}|.$$
\end{lemma}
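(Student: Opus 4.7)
The plan is to reduce the lemma to the ``moreover'' clause of Theorem~\ref{OrRep}, which tells us $e_y = \sum_{i=1}^{k} a_i$ whenever the $a_i$ satisfy conditions I and II. The target quantity $K(y) = \normu{e_y}^2$ will then be read off directly using orthogonality of the $a_i$ and Condition~\ref{condition-2}.

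First I would reindex so that $\beta_i \neq 0$ for $i=1,\ldots,l$ and $\beta_i = 0$ otherwise, where $l = |\{i \mid \beta_i \neq 0\}|$. Writing $s_i = \mathrm{sign}(\beta_i)$ and $\gamma_i = |\beta_i|$, I set $b_i := s_i a_i$ and claim $\{b_1, \ldots, b_l\}$ is an orthogonal subset of $\mathcal{G}_{\norm{\cdot}}$ satisfying I and II of Theorem~\ref{OrRep}. Symmetry of the norm ball gives $-a_i \in \mathcal{G}_{\norm{\cdot}}$, so each $b_i \in \mathcal{G}_{\norm{\cdot}}$; orthogonality is inherited from that of the $a_i$. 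Condition \ref{condition-2} ensures $\normu{b_i} = 1$.

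Next I verify II for $\{b_i\}_{i=1}^{l}$: given $|\eta_i| \leq 1$, we have $\sum_{i=1}^{l} \eta_i b_i = \sum_{i=1}^{k} (\eta_i s_i) a_i$ (with $\eta_i = 0$ for $i > l$), and since $|\eta_i s_i| \leq 1$, property II of the original set $\{a_i\}_{i=1}^k$ yields $\dnorm{\sum_{i=1}^{l} \eta_i b_i} \leq 1$. For I, I have $y = \sum_{i=1}^{l} \gamma_i b_i$ by construction, and to obtain $\norm{y} = \sum_{i=1}^{l} \gamma_i$ I use a two-sided bound: triangle inequality (with $\norm{b_i} = 1$, since $b_i \in \mathcal{G}_{\norm{\cdot}} \subset \partial \mathcal{B}_{\norm{\cdot}}$) gives $\norm{y} \leq \sum_{i=1}^{l} \gamma_i$, while choosing the test functional $\xi := \sum_{i=1}^{l} b_i$, which has $\dnorm{\xi} \leq 1$ by property II, gives
\begin{equation*}
\norm{y} \;\geq\; \inner{\xi}{y} \;=\; \sum_{i,j=1}^{l} s_i s_j \gamma_j \inner{a_i}{a_j} \;=\; \sum_{i=1}^{l} \gamma_i,
\end{equation*}
using orthogonality of the $a_i$ and $\normu{a_i}=1$.

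Finally, applying the ``moreover'' part of Theorem~\ref{OrRep} to $\{b_1,\ldots,b_l\}$, I conclude $e_y = \sum_{i=1}^{l} b_i$. Then
\begin{equation*}
K(y) \;=\; \normu{e_y}^2 \;=\; \sum_{i=1}^{l} \normu{b_i}^2 \;=\; l \;=\; |\{i \mid \beta_i \neq 0\}|,
\end{equation*}
again by orthogonality and Condition~\ref{condition-2}. The only subtle step is checking that the sign-flipped set still satisfies II, which is immediate from the hypothesis once one allows signs to be absorbed into the coefficients $\eta_i$; everything else is bookkeeping built on Theorem~\ref{OrRep}.
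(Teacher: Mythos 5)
Your proposal is correct and follows essentially the same route as the paper: flip signs to form $a_i' = \mathrm{sgn}(\beta_i)a_i$, check that the reduced set still satisfies condition II, establish condition I by the two-sided bound (triangle inequality for $\leq$, pairing with $\sum_i a_i'$ for $\geq$), and then read off $e_y$ from the ``moreover'' clause of Theorem~\ref{OrRep} to get $K(y)=\normu{e_y}^2 = |\{i \mid \beta_i \neq 0\}|$. No substantive differences.
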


\begin{proof}
Let $k' = |\{i\;|\; \beta_i \neq  0\}|$. Without loss of generality assume that $\beta_i \neq 0$ for $i \leq k'$ and $\beta_i = 0$ for $i > k'$. Let $\eta_i = {\rm sgn}(\beta_i)$ and $a'_i = {\rm sgn}(\beta_i) a_i$ for all $i \leq k'$. Since $a_1, \ldots, a_k$ satisfy condition II in the orthogonal representation theorem, so do $a'_1,\ldots, a'_{k'}$.

Now we show that $y$ and $a'_1,\ldots, a'_{k'}$ satisfy condition I. By \eqref{arbit-sum}, $\dnorm{\sum_{i=1}^{k'} a'_i}  \leq 1$. Therefore,
\begin{align*}
&\norm{y} \geq \inner{\sum_{i=1}^{k'} a'_i}{y} = \sum_{i=1}^{k'} |\beta_i|, \quad
\norm{y} = \norm{ \sum_{i=1}^{k'} \beta_i a'_i} \leq \sum_{i=1}^{k'} |\beta_i| \quad \Rightarrow \quad \norm{y} = \sum_{i=1}^{k'} |\beta_i|.
\end{align*}

Therefore, by the orthogonal representation theorem, $e_y = \sum_{i=1}^{k'} a'_i$. Thus $K(y) = \normu{e_y}^2 = k'$.
\qed
\end{proof}

For any $x \in \mathbb{R}^n - \{0\}$ define

$$l(x) = \min \{l \; | \; x = \sum_{i=1}^{l} \alpha_i b_i, \; b_1, \ldots, b_l \subseteq \mathcal{G}_{\norm{\cdot}}, \;\alpha_i \in \mathbb{R} \}.$$

Define $l(0) = 0$. Now the proof is a simple consequence of the following lemma:

\begin{lemma}  For all $x \in \mathbb{R}^n$, $l(x) = K(x)$.
\end{lemma}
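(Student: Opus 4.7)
The theorem follows immediately from the lemma together with the trivial subadditivity of $l$: concatenating minimal representations $x=\sum_{i=1}^{l(x)}\alpha_i b_i$ and $y=\sum_{j=1}^{l(y)}\beta_j b'_j$ yields a (possibly non-minimal) representation of $x+y$ with at most $l(x)+l(y)$ atoms, so $l(x+y)\le l(x)+l(y)$, and substituting $K$ for $l$ via the lemma gives $K(x+y)\le K(x)+K(y)$. Thus the whole burden is in proving the lemma.

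For the lemma itself, the inequality $l(x)\le K(x)$ is immediate from Theorem~\ref{OrRep}: that theorem supplies an orthogonal decomposition $x=\sum_{i=1}^{K(x)}\gamma_i a_i$ with $a_i\in\mathcal{G}_{\norm{\cdot}}$ and $\gamma_i>0$, which is a valid representation of $x$ using $K(x)$ atoms. Lemma~\ref{arbit-sum-primal} further identifies $K(x)$ with the number of nonzero coefficients in any representation whose atoms form an orthogonal subset of $\mathcal{G}_{\norm{\cdot}}$ satisfying condition~II of Theorem~\ref{OrRep}.

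The reverse inequality $l(x)\ge K(x)$ is the main obstacle. My plan is to argue by strong induction on $l=l(x)$, mimicking the constructive necessity proof of Theorem~\ref{OrRep}. The base case $l=0$ gives $x=0$ and $K(0)=0$. For the inductive step, given a minimal representation $x=\sum_{j=1}^l\alpha_j b_j$ (with $\alpha_j>0$, which we may assume by the symmetry of $\mathcal{G}_{\norm{\cdot}}$), pick a maximizer $a_1\in\argmax_{a\in\mathcal{G}_{\norm{\cdot}}}\inner{a}{x}$, set $\gamma_1=\inner{a_1}{x}$, and form $\Delta_1=x-\gamma_1 a_1\in T_{a_1}^{\bot}$. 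The crux is to show that $a_1$ may be selected inside $\vspan\{b_1,\ldots,b_l\}$; this is where Condition~\ref{condition-2} enters, since it forces the primal unit ball to lie inside the Euclidean unit ball, so that orthogonal projection onto $\vspan\{b_j\}$ cannot move an extremal maximizer of $\inner{a}{x}$ out of $\mathcal{G}_{\norm{\cdot}}$. Once $a_1\in\vspan\{b_j\}$ is secured, $\Delta_1$ inherits a representation using atoms of $\mathcal{G}_{\norm{\cdot}}$ with at most $l-1$ terms; the inductive hypothesis gives $K(\Delta_1)\le l-1$, and appending $a_1$ to an orthogonal representation of $\Delta_1$ (together with the augmentation step from the proof of Theorem~\ref{OrRep} that guarantees condition~II is preserved) produces an orthogonal representation of $x$ with at most $l$ terms, so $K(x)\le l$ by Lemma~\ref{arbit-sum-primal}.

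The hard part, as described, is the ``in-the-span'' selection of $a_1$: one needs to verify that for the class of norms satisfying Conditions~\ref{condition-2} and~\ref{condition-1}, the subset $\mathcal{G}_{\norm{\cdot}}\cap\vspan\{b_j\}$ always contains a maximizer of $\inner{\cdot}{x}$. An alternative dual route, which avoids this projection argument, is to exploit the identity $K(x)=\normu{e_x}^2=\min_{\xi\in\partial\norm{x}}\normu{\xi}^2$ (valid because $e_x\in T_x$ while $\partial\norm{x}-e_x\subseteq T_x^{\bot}$) and to build, from the minimal representation $x=\sum_j\alpha_j b_j$, an explicit subgradient $\xi\in\partial\norm{x}$ of squared Euclidean norm at most $l$, using Lemma~\ref{arbit-sum-primal} applied to the dual atoms aligned with $\{b_j\}$.
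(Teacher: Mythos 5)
Your reduction of Theorem~\ref{thm-con-2} to the lemma, and the easy inequality $l(x)\le K(x)$ via Theorem~\ref{OrRep}, both match the paper. The difficulty is the reverse inequality, and there your argument has a genuine gap. The inductive step rests on the claim that some maximizer $a_1\in\argmax_{a\in\mathcal{G}_{\norm{\cdot}}}\inner{a}{x}$ can be chosen inside $\vspan\{b_1,\ldots,b_l\}$, justified only by the remark that orthogonal projection onto that span ``cannot move an extremal maximizer out of $\mathcal{G}_{\norm{\cdot}}$.'' That is not a proof, and as stated the mechanism is unsound: the orthogonal projection of an extreme point of the norm ball onto a subspace need not lie in $\mathcal{G}_{\norm{\cdot}}$ (for $\ell_1$, projecting $e_i$ onto the span of other coordinates gives $0$), and Condition~\ref{condition-2} alone does not repair this. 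Moreover, even granting $a_1\in\vspan\{b_1,\ldots,b_l\}$, the conclusion that $\Delta_1=x-\gamma_1 a_1$ admits a representation with at most $l-1$ atoms does not follow: membership of $a_1$ in that span only gives $l(\Delta_1)\le l$; to drop the count by one you would essentially need $a_1$ to coincide with one of the $b_j$, which is the kind of rigidity statement the lemma itself is trying to establish. The alternative ``dual route'' is likewise only a sketch: the identity $K(x)=\min_{\xi\in\partial\norm{x}}\normu{\xi}^2$ is indeed valid under decomposability, but you do not explain how a minimal representation produces a subgradient with $\normu{\xi}^2\le l$.

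For comparison, the paper argues quite differently: it inducts on $K(x)$ and derives a contradiction from $l(x)<K(x)$ by a geometric argument in the cone $C={\rm Cone}\{a_1,\ldots,a_k\}$ generated by the orthogonal atoms of $x$. Writing $x=(u+v)/2$ with $u=2\alpha_1 b_1$ and $v=2\sum_{i\ge 2}\alpha_i b_i$, it uses Lemma~\ref{arbit-sum-primal} to show $u\notin{\rm int}\,C$; then either $v\in{\rm int}\,C$ directly, or a line-segment/boundary argument forces $b_1=a_1$ and matches the coefficients. In either case one produces a point $y$ with $K(y)=k$ and $l(y)\le l-1$, and iterating down to $l(y)=1$ yields the contradiction via extremality of the atoms. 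Some such global argument appears to be needed precisely because the local ``peel off $a_1$'' strategy cannot control $l$.
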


\begin{proof}

$K(x) \geq l(x)$ by the definition of $l(x)$. We prove that $K(x) = l(x)$ by induction on $K(x)$. When $K(x) \in\{0,1\}$, the statement is trivially true. Suppose the statement is true when $K(x) \in \{0,1,2,\ldots,k-1\}$. Consider the case where $K(x) = k$. By way of contradiction, suppose $l(x) < K(x)$. Let 
\begin{align}\label{k-rep}
x = \sum_{i=1}^{k} \gamma_i a_i,
\end{align} 
where $\gamma_1, \ldots, \gamma_k$ and $a_1,\ldots, a_k$ are given by the orthogonal representation theorem. If $l(x) = 1$, then:
$$\sum_{i=1}^{k} \gamma_i a_i = \alpha_1 b_1,$$

\noindent for some $\alpha_1 \neq 0$ and $b_1 \in \mathcal{G}_{\norm{\cdot}}$. Since $\lvert \alpha_1\lvert = \norm{ \alpha_1 b_1} = \norm{x} = \sum_{i=1}^{k} \gamma_i $, either $b_1$ or $-b_1$ can be written as convex combination of $a_1, \ldots, a_k$ which  contradicts the fact that $b_1\in  \mathcal{G}_{\norm{\cdot}}$.
%
%

If $l(x) = l > 1$, we can write $x$ as:

\begin{align}\label{l-rep}
x =  \sum_{i=1}^{l} \alpha_i b_i,
\end{align}
\noindent with $\{b_1, \ldots, b_l\} \subseteq \mathcal{G}_{\norm{\cdot}}$. By turning $b_i$ to $- b_i$ without loss of generality we assume that $\alpha_i > 0$ for all $i$. Let $u = 2 \alpha_1 b_1 $ and $v = 2 \sum_{i=2}^{l} \alpha_i b_i$ and note that $x = (u+v)/2$. Let $C = {\rm Cone}{\{a_1, a_2,\ldots, a_k\}}$. Let ${\rm int} C$ and ${\rm bd} C$ denote the interior and the boundary of $C$, respectively. Note that $u \notin {\rm int C}$ because by Lemma \ref{arbit-sum-primal}, if $u \in {\rm int C}$, then $K(u) = k$; however, $l(u) = 1$. Now we consider two cases for $v$.

\begin{enumerate}
\item[Case 1.]If $v \in {\rm int C}$, then we can write $v$ as a conic combination of $a_1, a_2, \ldots, a_k$ with positive coefficients:

$$v = 2 \sum_{i=2}^{l} \alpha_i b_i = \sum_{i=1}^{k} c_i a_i, $$

\noindent where $c_i > 0$ for all $i$. 

\item[Case 2.] If $v \notin {\rm int} C$. let $L = \{\theta u + (1-\theta) v \; | \; \theta \in [0,1]\}$. Since $L$ intersects the interior of $C$ at $x$ and $\{u, v\} \notin {\rm int} C$, there exists $u',v'$ such that $L \cap {\rm bd} C = \{u',v'\}$. Suppose $v'$ is on the line segment between $v$ and $x$ (see Figure~\ref{fig:cone}). Let $L'=  \{\theta u + (1-\theta) v' \; | \; \theta \in [0,1]\}$ and note that $x \in L'$. Since $v' \in {\rm bd} C$, it can be written as conic combination of at most $k-1$ of $a_1, \ldots, a_k$. Without loss of generality assume that $v' = \sum_{i=2}^{k}{\beta_i a_i}$. For some $\theta \in (0,1)$:
$$x = \theta u + (1-\theta) v' =  \alpha_1' b_1 + \sum_{i=2}^{k}{\beta_i' a_i},$$

where $\alpha_1' = 2 \theta \alpha_1$ and $\beta'_i = (1 -\theta) \beta_i$. Using the representation in \eqref{k-rep}, we get:
\begin{align*}
\alpha_1' b_1 &= \gamma_1 a_1 +  \sum_{i=2}^{k} (\gamma_i - \beta_i') a_i. \\
\end{align*}

We have $l(\alpha_1' b_1) = 1$, and by Lemma \ref{arbit-sum-primal}, $K(\alpha_1' b_1) = 1 + |\{i | \gamma_i \neq \beta_i', i = 2, \ldots, k \}|$. Therefore, $\gamma_i = \beta_i'$ for all $i = 2,\ldots,k$ and $b_1 = a_1$. Combining the previous fact with \eqref{k-rep} and \eqref{l-rep}, we get:
  \begin{align}
x - \alpha_1 a_1 = (\gamma_1 - \alpha_1) a_1 + \sum_{i=2}^{k} \gamma_i a_i  = \sum_{i=2}^{l} \alpha_i b_i.
\end{align}

If $\gamma_{1} = \alpha_1$, by the induction hypothesis  $k = l$ , which is a contradiction. Now, suppose $\gamma_1 - \alpha_1 \neq 0$. In both cases we produced a point $y = v$ such that $K(y) = k$ and $l(y) \leq l-1$. We can continues this procedure until we get a $y$ such that $K(y) = k$ and $l(y) = 1$, which gives us the contradiction.
\qed

\end{enumerate}
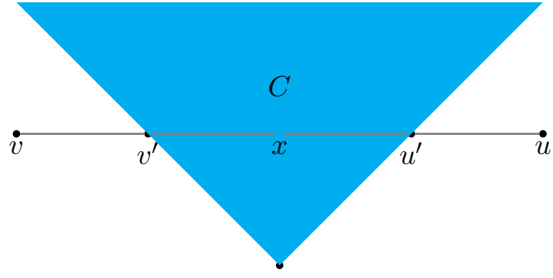
\begin{figure}
\begin{center}
	\begin{tikzpicture}
	[scale=3.5,font = \large, dot/.style={circle,draw=black!100,fill=black!100,thick,inner sep=0pt,minimum size=2pt}]
	    \node at (-1,1) (n1) {};
	    \node[dot] at (0,0)  (n2) {};
	    \node at (1,1)  (n3) {};
 	    \node[dot] at (1,0.5)  (e1){};
 	   \node[dot] at (0.5,0.5) (e2) {};
 	   \node[dot] at (-0.5,0.5) (e3) {};
 	   \node[dot] at (-1, 0.5) (e4) {};
 	  \node[dot] at (0, 0.5) (x){};
             \fill[color=cyan]
    (-1,1)  -- (0,0) -- (1,1) -- cycle;
	    \draw[gray,thick] (e1) -- (e2) -- (x) -- (e3) -- (e4);
	    \node[below] at (e1) {$u$};
	    \node[below] at (e2) {$u'$};
	    \node[below] at (x) {$x$};
	     \node[below] at (e3) {$v'$};
	    \node[below] at (e4) {$v$};
	     \node[below] at (0,0.75) {$C$};
	\end{tikzpicture}
	\end{center}
	\caption{Relative position of $u'$ and $v'$ on the line segment between $u$ and $v$. 
	}\label{fig:cone}
	\end{figure}

\end{proof}
\subsection{Proof of Proposition \ref{convresult}}

In iteration $t+1$ when the backtrack procedure stops, the following inequality holds true:
\begin{align}\notag
\phi_{\lambda}(x^{(t+1)})\leq m_{M_{t+1}}(x^{(t)},x^{(t+1)}) &= \min_{x}{f(x^{(t)}) + \langle \nabla f(x^{(t)}) , x-x^{(t)} \rangle + \frac{M_{t+1}}{2} \normu{x-x^{(t)}}^2 + \lambda \norm{x}}\\ \label{backtrack-ineq}
&\leq \min_{x} \phi_{\lambda}(x) + \frac{M_{t+1}}{2} \normu{x-x^{(t)}}^2.
\end{align}

On the other hand, by \eqref{lip1-itr}, we have

$$\phi_{\lambda}(x^{(t+1)})\leq m_{L_f}(x^{(t)},x^{(t+1)}), $$

\noindent which ensures $M_{t+1} \leq \gamma_{\rm inc} L_f$ since $m_{L}(x^{(t)},x^{(t+1)})$ is non-decreasing in $L$. By \eqref{str1-itr}, we have:
\begin{equation}\label{StrongOpt}
\phi_{\lambda}(x^{(t)}) \geq \phi_{\lambda}(x^{*}) + \frac{\mu_f}{2} \normu{x^{(t)}-x^{*}}^2.
\end{equation}
If we confine $x$ to $\{\alpha x^{*}+(1-\alpha)x^{(t)}\,|\, 0 \leq \alpha \leq 1\}$, inequality \eqref{backtrack-ineq} combined with \eqref{StrongOpt} results in
\begin{align*}
\phi_{\lambda}(x^{(t+1)}) &\leq \min_{\alpha \in [0 ,1]}\{{\phi_{\lambda}(\alpha x^{*}+(1-\alpha)x^{(t)})+\frac{\alpha^2 M_{t+1}}{2}{\normu{x^{(t)}-x^{*}}^2}}\}\\
&\leq\min_{\alpha \in [0 ,1]}\{{\alpha \phi_{\lambda}(x^{*})+(1-\alpha)\phi_{\lambda}(x^{(t)})+\frac{\alpha^2 M_{t+1}}{2}{\normu{x^{(t)}-x^{*}}^2}}\}\\
&\leq\min_{\alpha \in [0 ,1]}\{{\alpha \phi_{\lambda}(x^{*})+(1-\alpha)\phi_{\lambda}(x^{(t)})+\frac{\alpha^2 \gamma_{\rm inc} L_f}{ \mu_f}(\phi_{\lambda}(x^{(t)})-\phi_{\lambda}(x^{*}))}\}.
\end{align*}

      The RHS of the above inequality is minimized for $\alpha^* = \min\{1,\frac{\mu_f}{2 \gamma_{\rm inc} L_f}\}$. Therefore, we get
       $$ \phi_{\lambda}(x^{(t+1)}) - \phi_{\lambda}(x^{*}) \leq (1-\alpha^* + \frac{{\alpha^*}^2 \gamma_{\rm inc} L_f}{\mu_f}) (\phi_{\lambda}(x^{(t)})-\phi_{\lambda}(x^{*}))\leq (1-\frac{\mu_f}{4 \gamma_{\rm inc} L_f}) (\phi_{\lambda}(x^{(t)})-\phi_{\lambda}(x^{*})).$$
       
%
    To prove \eqref{second-co}, we note that the backtrack stopping criteria ensures
    \begin{align}\notag
\phi_{\lambda}(x^{(t+1)})&\leq {f(x^{(t)}) + \langle \nabla f(x^{(t)}) , x^{(t+1)}-x^{(t)} \rangle + \frac{M_{t+1}}{2} \normu{x^{(t+1)}-x^{(t)}}^2 + \lambda \norm{x^{(t+1)}}}\\  \notag
&\leq {f(x^{(t)}) - \langle M_{t+1} (x^{(t+1)}-x^{(t)}) +\xi , x^{(t+1)}-x^{(t)} \rangle + \frac{M_{t+1}}{2} \normu{x^{(t+1)}-x^{(t)}}^2 + \lambda \norm{x^{(t+1)}}}\\ \notag
&\leq {f(x^{(t)}) - \frac{M_{t+1}}{2} \normu{x^{(t+1)}-x^{(t)}}^2 +\langle \xi , x^{(t)} - x^{(t+1)} \rangle +\lambda \norm{x^{(t+1)}}}\\ 
&\leq \phi_{\lambda}(x^{(t)}) - \frac{M_{t+1}}{2} \normu{x^{(t+1)}-x^{(t)}}^2.\label{omega-2}
\end{align}

The hypothesis \eqref{str2-itr} ensures $M_{t+1} \geq \mu_f$. Combining  \eqref{omega-1} and \eqref{omega-2} and using the lower and the upper bounds on $M_{t+1}$, we get the desired result
\begin{align*}
    \omega_{\lambda}(x^{(t+1)}) 
    &\leq  \dnorm{M_{t+1} (x^{(t)} - x^{(t+1)}) +\nabla f(x^{(t+1)})- \nabla f(x^{(t)})}\\
    &\leq \theta ({M_{t+1}} + {L'_f}) \normu{x^{(t+1)} - x^{(t)}} \\
    &\leq \theta (1 + \frac{L'_f}{M_{t+1}}) \sqrt{2 M_{t+1}(\phi_{\lambda}(x^{(t)})-\phi_{\lambda}(x^{(t+1)}))} \\
    &\leq \theta (1+\frac{L'_f}{\mu_f})\sqrt{ 2 \gamma_{\rm inc}L_f (\phi_{\lambda}(x^{(t)})-\phi_{\lambda}(x^*)) }.
 \end{align*}


\subsection{Proof of Lemma \ref{conv-lemma-1}}

By the hypothesis  there exists $\xi \in \partial \norm{x}$ such that $\dnorm{A^*(A x-b)+\lambda \xi} \leq \delta\lambda$. Therefore, we can write
\begin{align} \notag
\delta \lambda \norm{x-x_{0}} &\geq \norm{x-x_{0}} \dnorm{A^*(A x-b)+\lambda \xi} \geq \langle (x-x_{0}) , A^*(A x-b)+\lambda \xi \rangle\\ \notag
  &= \langle (x-x_{0}) , A^*(A(x-x_{0}))-A^* z+\lambda \xi \rangle \\ \notag
  &= \normu{A(x-x_{0})}^2 - \langle x-x_{0} , A^* z \rangle +\lambda \langle x-x_{0} ,  \xi \rangle\\ \label{throughyoureyes}
   &\geq \normu{A(x-x_{0})}^2 - \norm{x-x_{0}} \dnorm{A^* z} +\lambda (\norm{x} - \norm{x_{0}}).
      \end{align}
      
      Now we lower-bound $\norm{x}$:
      $$\norm{x} = \norm{x-x_0+x_0} \geq \norm{\ptpx{x_{0}}{x-x_{0}}+x_{0}} -  \norm{\ptx{x_{0}}{x-x_{0}}}. $$

      By Lemma \ref{dual}, there exists $s \in T_{x_{0}}^{\bot}$ such that $\innerB{s}{\ptpx{x_{0}}{x-x_{0}}} = \norm{\ptpx{x_{0}}{x-x_{0}}}$ and $\dnorm{s} = 1$. 
Note that $e_{x_0}+s \in \partial{\norm{x_{0}}}$ hence $\dnorm{e_{x_0}+s} \leq 1$. Therefore, we get:
      $$\norm{\ptpx{x_{0}}{x-x_{0}}+x_{0}} \geq \innerB{e_{x_0}+s}{\ptpx{x_{0}}{x-x_{0}}+x_{0}} \geq \norm{\ptpx{x_{0}}{x-x_{0}}}+\norm{x_{0}},$$ 
      
      \begin{equation}\label{that'sthesprit}
      \norm{x} - \norm{x_{0}} \geq \norm{\ptpx{x_{0}}{x-x_{0}}} -  \norm{\ptx{x_{0}}{x-x_{0}}} .
      \end{equation}
      
Combining \eqref{that'sthesprit} and \eqref{throughyoureyes}, we get      
\begin{align*}
     \delta \lambda \norm{x-x_{0}} &\geq  \lambda (\norm{\ptpx{x_{0}}{x-x_{0}}} -  \norm{\ptx{x_{0}}{x-x_{0}}})- \norm{x-x_{0}} \dnorm{A^* z} +\normu{A(x-x_{0})}^2. 
      \end{align*}       
      
      By applying triangle inequality to $\norm{x-x_0}$, we obtain
      \begin{align}\label{whoknowswhat}
     (\lambda(1+ \delta) +\dnorm{A^* z})\norm{\ptx{x_{0}}{x-x_{0}}}&\geq (\lambda(1-\delta) -\dnorm{A^* z})\norm{\ptpx{x_{0}}{x-x_{0}}}+\normu{A(x-x_{0})}^2.
      \end{align}

That yields
      \begin{align}\notag
     \frac{\norm{x-x_0}}{\normu{x-x_0}} &\leq \frac{\norm{\ptx{x_{0}}{x-x_{0}}}+\norm{\ptpx{x_{0}}{x-x_{0}}}}{\normu{\ptx{x_{0}}{x-x_{0}}}}\\ \notag
&\leq (1+\gamma) \frac{\norm{\ptx{x_{0}}{x-x_{0}}}}{\normu{\ptx{x_{0}}{x-x_{0}}}}\leq (1+\gamma)\sqrt{ck_{0}}.
      \end{align}

Using the definition of the lower restricted isometry constant, we derive
  \begin{align*}
         \rho_{-}(A,{ c(1+\gamma)^2 k_{0}})\normu{x-x_{0}}^2 \leq \normu{A(x-x_{0})}^2 &\leq^{\eqref{whoknowswhat}} {((1+\delta)\lambda + \dnorm{A^* z})}\norm{\ptx{x_{0}}{x-x_{0}}} \\
         & \leq {\sqrt{ck_{0}}((1+\delta)\lambda + \dnorm{A^* z})}\normu{\ptx{x_{0}}{x-x_{0}}}\\
&\leq {\sqrt{ck_{0}}((1+\delta)\lambda + \dnorm{A^* z})}\normu{{x-x_{0}}},
  \end{align*}

\noindent  which yields the following bounds  
  \begin{align} \label{H-first}
& \normu{x-x_{0}} \leq \frac{{\sqrt{ck_{0}}((1+\delta)\lambda + \dnorm{A^* z})}}{\rho_{-}(A,{ c(1+\gamma)^2 k_{0}})},\\ \label{H-second}
& \norm{x-x_{0}} \leq \frac{{ck_{0}(1+\gamma)((1+\delta)\lambda + \dnorm{A^* z})}}{\rho_{-}(A,{ c(1+\gamma)^2 k_{0}})}.
\end{align}

By convexity of $\phi_{\lambda}$,
\begin{align*}
\phi_{\lambda}(x)-\phi_{\lambda}(x_0)  \leq  \innerB{\lambda\xi+A^*(A x-b)}{x-x_0} \leq \frac{{ck_{0}\delta \lambda(1+\gamma)((1+\delta)\lambda + \dnorm{A^* z})}}{\rho_{-}(A,{ c(1+\gamma)^2 k_{0}})}.
\end{align*}



\subsection{Proof of Lemma \ref{conv-lemma-2}}
Let $\Delta=\frac{3 ck_{0} \lambda(1+\gamma)}{2 \rho_{-}(A,{ c(1+\gamma)^2 k_{0}})}$. We can write
\begin{align}\notag
&\phi_{\lambda}(x) \leq \phi_{\lambda}(x_{0})+ \delta \lambda \Delta 
\end{align}
\begin{align}\notag
\Rightarrow\quad   \frac{1}{2} \normu{A x-b}^2  - \frac{1}{2} \normu{Ax_{0}-b}^2  &\leq  \lambda (\norm{x_{0}} - \norm{x}) + \delta \lambda \Delta\\ \label{int-ineq}
&\leq  \lambda \norm{x_{0}-x}+ \delta \lambda \Delta
\end{align}

 If $\norm{x-x_{0}} \leq \Delta$, half of the conclusion is immediate. To get the second half, we can expand the left hand side of \eqref{int-ineq} to get: 
\begin{align*}
 \frac{1}{2} \normu{A(x-x_{0})}^2 &\leq  \lambda \norm{x-x_0} +\langle x-x_{0} , A^* z\rangle +  \delta \lambda\Delta\\
& \leq  (\lambda+\dnorm{A^*z}) \norm{x-x_0}+  \delta \lambda \Delta\\
& \leq  (\frac{5}{4}+\delta)\lambda \Delta \leq \lambda \frac{3 \Delta}{2}.
\end{align*}

 Suppose $\norm{x-x_{0}} > \Delta$, then from \eqref{int-ineq} we get:

\begin{align*}f
 \lambda (\norm{x}-\norm{x_{0}}) &\leq  \frac{1}{2} \normu{A x_{0}-b}^2 - \frac{1}{2} \normu{A x-b}^2 + \delta \lambda \norm{x-x_{0}}\\
 &\leq  -\frac{1}{2} \normu{A(x-x_{0})}^2 + \langle x-x_{0} , A^* z\rangle +  \delta \lambda \norm{x-x_{0}}\\
&\leq  -\frac{1}{2} \normu{A(x-x_{0})}^2+\dnorm{A^* z}\norm{ x-x_{0}}   +  \delta \lambda \norm{x-x_{0}}.
\end{align*}

By using \eqref{that'sthesprit} and triangle inequality we get:
 \begin{align*}
     (\lambda(1+ \delta) +\dnorm{A^* z})\norm{\ptx{x_{0}}{x-x_{0}}}&\geq (\lambda(1-\delta') -\dnorm{A^* z} )\norm{\ptpx{x_{0}}{x-x_{0}}}+\frac{1}{2}\normu{A(x-x_{0})}^2 .
    \end{align*}     
     Using the same reasoning as in the proof of Lemma \ref{conv-lemma-1}, we get the desired results.


\subsection{Proof of Lemma \ref{conv-lemma-3}}

By first order optimality condition there exists $\xi \in \partial \norm{x^{+}}$ such that:
\begin{align*}
\lambda \xi &= L(x-x^{+}) - \nabla f(x) \\
& =  L(x-x^{+}) - A^*(A x-b) \\
& =  L(x-x^{+}) - A^*(A (x-x_0)) + A^* z
\end{align*}

Note that $\xi = e_{x^{+}} + v$ for some $v \in T_{x^{+}}^{\bot}$. By Lemma~\ref{dual}, there exists $v' \in T_{x^{+}}^{\bot} \cap \mathcal{B}_{\dnorm{\cdot}}$ such that $\inner{v'}{v} = \norm{v}$. Since $e_{x^+} + v' \in \partial \norm{x^{+}}$, $\dnorm{e_{x^+} + v'} \leq 1$. Therefore, we can write:

\begin{align*}
\norm{\xi} = \norm{e_{x^+} + v} \geq \inner{e_{x^+} + v'}{e_{x^+} + v} = \norm{e_{x^+}} + \norm{v}
\Rightarrow K(x^{+}) = \norm{e_{x^+}} \leq \norm{\xi}.
\end{align*}


Let $\xi= \sum_{i=1}^{l}\gamma_i a_i$, where $a_1,\ldots,a_{l}$ and $\gamma_1, \ldots, \gamma_{l}$ are given by the orthogonal representation theorem. Since $\gamma_i \leq 1$ for all $i$, $l \geq \norm{\xi}$. If $\norm{\xi} > \tilde{k}$, we can define $u = \sum_{i=1}^{\tilde{k}}{a_i}$, then
\begin{align}\notag
\tilde{k} \lambda \leq \innerB{u}{\lambda \xi} &=\innerB{u}{L(x^{+}-x)} - \innerB{A u}{A(x-x_0)} + \innerB{u}{A^* z}\\ \notag 
&\leq  L \norm{x^{+}-x} + \sqrt{\rho_{+}(A,\tilde{k}) \tilde{k}} \normu{A(x-x_0)}+ \tilde{k} \dnorm{A^* z}\\\label{H-first-a}
 \Rightarrow  \frac{3\tilde{k}\lambda}{4} &\leq L \norm{x^{+}-x} + \sqrt{\rho_{+}(A,\tilde{k}) \tilde{k}} \normu{A(x-x_0)}.
\end{align}

Since $\phi_{\lambda}(x^{+}) \leq \phi_{\lambda}{(x)}$, by Lemma \ref{conv-lemma-2}, we have:
%
\begin{align*}
\norm{x^{+}-x} \leq \norm{x^{+}-x_0}+\norm{x-x_0} \leq \frac{9 ck_{0} \lambda(1+\gamma)}{ \rho_{-}(A,{ c(1+\gamma)^2 k_{0}})},\\
\normu{A(x-x_0)}^2 \leq  \frac{9 ck_{0} \lambda^2 (1+\gamma)}{ \rho_{-}(A,{ c(1+\gamma)^2 k_{0}})}.
\end{align*}

Define 
\begin{align*}
\alpha = \gamma_{\rm inc} \rho_{+}(A,2\tilde{k})\frac{9 ck_{0} (1+\gamma)}{ \rho_{-}(A,{ c(1+\gamma)^2 k_{0}})},\\
\beta^2 =   \rho_{+}(A,\tilde{k})\frac{9 ck_{0} (1+\gamma)}{ \rho_{-}(A,{ c(1+\gamma)^2 k_{0}})}.
\end{align*}

We can rewrite \eqref{H-first-a} as:
\begin{align*}
\frac{3\tilde{k}}{4} -\alpha -\beta \sqrt{\tilde{k}} < 0 \Rightarrow \sqrt{\tilde{k}} < \frac{2}{3} (\beta+ \sqrt{\beta^2 + 3 \alpha}) \leq 2 \sqrt{\alpha}. 
\end{align*}

But this contradicts Assumption~\ref{assumption-2}, so $\norm{\xi} \leq  \tilde{k}$ hence $K(x^{+}) \leq \tilde{k}$.


%
%

\begin{acknowledgements}
The authors are greatly indebted to Dr. Lin Xiao from Microsoft Research, Redmond, for his many valuable comments and suggestions. We thank Amin Jalali for his comments and helpful discussions.
\end{acknowledgements}

\bibliographystyle{spmpsci}      
\bibliography{myref}

\begin{thebibliography}{10}
\providecommand{\url}[1]{{#1}}
\providecommand{\urlprefix}{URL }
\expandafter\ifx\csname urlstyle\endcsname\relax
  \providecommand{\doi}[1]{DOI~\discretionary{}{}{}#1}\else
  \providecommand{\doi}{DOI~\discretionary{}{}{}\begingroup
  \urlstyle{rm}\Url}\fi

\bibitem{agarwal2011fast}
Agarwal, A., Negahban, S., Wainwright, M.J.: Fast global convergence rates of
  gradient methods for high-dimensional statistical recovery pp. 37--45 (2010)

\bibitem{bunea2007sparsity}
Bunea, F., Tsybakov, A., Wegkamp, M., et~al.: Sparsity oracle inequalities for
  the lasso.
\newblock Electronic Journal of Statistics \textbf{1}, 169--194 (2007)

\bibitem{cai2010singular}
Cai, J.F., Cand{\`e}s, E.J., Shen, Z.: A singular value thresholding algorithm
  for matrix completion.
\newblock SIAM Journal on Optimization \textbf{20}(4), 1956--1982 (2010)

\bibitem{candes2011tight}
Candes, E., Plan, Y.: Tight oracle inequalities for low-rank matrix recovery
  from a minimal number of noisy random measurements.
\newblock Information Theory, IEEE Transactions on \textbf{57}(4), 2342--2359
  (2011)

\bibitem{candes2012simple}
Cand{\`e}s, E., Recht, B.: Simple bounds for recovering low-complexity models.
\newblock Mathematical Programming \textbf{141}(1-2), 577--589 (2013)

\bibitem{candes2006near}
Candes, E., Tao, T.: Near-optimal signal recovery from random projections:
  Universal encoding strategies?
\newblock Information Theory, IEEE Transactions on \textbf{52}(12), 5406--5425
  (2006)

\bibitem{candes2007dantzig}
Candes, E., Tao, T.: The dantzig selector: statistical estimation when p is
  much larger than n.
\newblock The Annals of Statistics pp. 2313--2351 (2007)

\bibitem{candes2006stable}
Candes, E.J., Romberg, J.K., Tao, T.: Stable signal recovery from incomplete
  and inaccurate measurements.
\newblock Communications on pure and applied mathematics \textbf{59}(8),
  1207--1223 (2006)

\bibitem{chandrasekaran2012convex}
Chandrasekaran, V., Recht, B., Parrilo, P.A., Willsky, A.S.: The convex
  geometry of linear inverse problems.
\newblock Foundations of Computational Mathematics \textbf{12}(6), 805--849
  (2012)

\bibitem{donoho2006compressed}
Donoho, D.L.: Compressed sensing.
\newblock Information Theory, IEEE Transactions on \textbf{52}(4), 1289--1306
  (2006)

\bibitem{gordon1985some}
Gordon, Y.: Some inequalities for gaussian processes and applications.
\newblock Israel Journal of Mathematics \textbf{50}(4), 265--289 (1985)

\bibitem{gross2011recovering}
Gross, D.: Recovering low-rank matrices from few coefficients in any basis.
\newblock Information Theory, IEEE Transactions on \textbf{57}(3), 1548--1566
  (2011)

\bibitem{hale2008fixed}
Hale, E.T., Yin, W., Zhang, Y.: Fixed-point continuation for
  $\backslash$ell\_1-minimization: Methodology and convergence.
\newblock SIAM Journal on Optimization \textbf{19}(3), 1107--1130 (2008)

\bibitem{hou2013linear}
Hou, K., Zhou, Z., So, A.M., Luo, Z.q.: On the linear convergence of the
  proximal gradient method for trace norm regularization.
\newblock In: Advances in Neural Information Processing Systems, pp. 710--718
  (2013)

\bibitem{jain2010guaranteed}
Jain, P., Meka, R., Dhillon, I.S.: Guaranteed rank minimization via singular
  value projection.
\newblock In: NIPS, vol.~23, pp. 937--945 (2010)

\bibitem{jin2013new}
Jin, R., Yang, T., Zhu, S.: A new analysis of compressive sensing by stochastic
  proximal gradient descent.
\newblock CoRR \textbf{abs/1304.4680} (2013)

\bibitem{ledoux2013probability}
Ledoux, M., Talagrand, M.: Probability in Banach Spaces: isoperimetry and
  processes, vol.~23.
\newblock Springer Science \& Business Media (2013)

\bibitem{liu2009interior}
Liu, Z., Vandenberghe, L.: Interior-point method for nuclear norm approximation
  with application to system identification.
\newblock SIAM Journal on Matrix Analysis and Applications \textbf{31}(3),
  1235--1256 (2009)

\bibitem{lounici2011oracle}
Lounici, K., Pontil, M., Van De~Geer, S., Tsybakov, A.B., et~al.: Oracle
  inequalities and optimal inference under group sparsity.
\newblock The Annals of Statistics \textbf{39}(4), 2164--2204 (2011)

\bibitem{lu2013complexity}
Lu, Z., Xiao, L.: On the complexity analysis of randomized block-coordinate
  descent methods.
\newblock Mathematical Programming \textbf{152}(1-2), 615--642 (2015)

\bibitem{luo1992linear}
Luo, Z.Q., Tseng, P.: On the linear convergence of descent methods for convex
  essentially smooth minimization.
\newblock SIAM Journal on Control and Optimization \textbf{30}(2), 408--425
  (1992)

\bibitem{ma2011fixed}
Ma, S., Goldfarb, D., Chen, L.: Fixed point and bregman iterative methods for
  matrix rank minimization.
\newblock Mathematical Programming \textbf{128}(1), 321--353 (2011)

\bibitem{mazumder2010spectral}
Mazumder, R., Hastie, T., Tibshirani, R.: Spectral regularization algorithms
  for learning large incomplete matrices.
\newblock The Journal of Machine Learning Research \textbf{11}, 2287--2322
  (2010)

\bibitem{mendelson2007reconstruction}
Mendelson, S., Pajor, A., Tomczak-Jaegermann, N.: Reconstruction and
  subgaussian operators in asymptotic geometric analysis.
\newblock Geometric and Functional Analysis \textbf{17}(4), 1248--1282 (2007)

\bibitem{moreau1962fonctions}
Moreau, J.J.: Fonctions convexes duales et points proximaux dans un espace
  hilbertien.(french).
\newblock CR Acad. Sci. Paris \textbf{255}, 2897--2899 (1962)

\bibitem{needell2009cosamp}
Needell, D., Tropp, J.A.: Cosamp: Iterative signal recovery from incomplete and
  inaccurate samples.
\newblock Applied and Computational Harmonic Analysis \textbf{26}(3), 301--321
  (2009)

\bibitem{negahban2012unified}
Negahban, S.N., Ravikumar, P., Wainwright, M.J., Yu, B.: A unified framework
  for high-dimensional analysis of $ m $-estimators with decomposable
  regularizers.
\newblock Statistical Science \textbf{27}(4), 538--557 (2012)

\bibitem{nesterov2012efficiency}
Nesterov, Y.: Efficiency of coordinate descent methods on huge-scale
  optimization problems.
\newblock SIAM Journal on Optimization \textbf{22}(2), 341--362 (2012)

\bibitem{nesterov2013gradient}
Nesterov, Y.: Gradient methods for minimizing composite functions.
\newblock Mathematical Programming \textbf{140}(1), 125--161 (2013)

\bibitem{nesterov2013first}
Nesterov, Y., Nemirovski, A.: On first-order algorithms for l 1/nuclear norm
  minimization.
\newblock Acta Numerica \textbf{22}, 509--575 (2013)

\bibitem{nesterov2004introductory}
Nesterov, Y., Nesterov, I.E.: Introductory lectures on convex optimization: A
  basic course, vol.~87.
\newblock Springer (2004)

\bibitem{nguyen2014linear}
Nguyen, N., Needell, D., Woolf, T.: Linear convergence of stochastic iterative
  greedy algorithms with sparse constraints.
\newblock arXiv preprint arXiv:1407.0088  (2014)

\bibitem{raskutti2011minimax}
Raskutti, G., Wainwright, M.J., Yu, B.: Minimax rates of estimation for
  high-dimensional linear regression over-balls.
\newblock Information Theory, IEEE Transactions on \textbf{57}(10), 6976--6994
  (2011)

\bibitem{recht2010guaranteed}
Recht, B., Fazel, M., Parrilo, P.: Guaranteed minimum-rank solutions of linear
  matrix equations via nuclear norm minimization.
\newblock SIAM review \textbf{52}(3), 471--501 (2010)

\bibitem{richtarik2014iteration}
Richt{\'a}rik, P., Tak{\'a}{\v{c}}, M.: Iteration complexity of randomized
  block-coordinate descent methods for minimizing a composite function.
\newblock Mathematical Programming \textbf{144}(1-2), 1--38 (2014)

\bibitem{rockafellar1976monotone}
Rockafellar, R.T.: Monotone operators and the proximal point algorithm.
\newblock SIAM Journal on Control and Optimization \textbf{14}(5), 877--898
  (1976)

\bibitem{rohde2011estimation}
Rohde, A., Tsybakov, A.B., et~al.: Estimation of high-dimensional low-rank
  matrices.
\newblock The Annals of Statistics \textbf{39}(2), 887--930 (2011)

\bibitem{shalev2011large}
Shalev-Shwartz, S., Gonen, A., Shamir, O.: Large-scale convex minimization with
  a low-rank constraint.
\newblock arXiv preprint arXiv:1106.1622  (2011)

\bibitem{shalev2010trading}
Shalev-Shwartz, S., Srebro, N., Zhang, T.: Trading accuracy for sparsity in
  optimization problems with sparsity constraints.
\newblock SIAM Journal on Optimization \textbf{20}(6), 2807--2832 (2010)

\bibitem{talagrand2005generic}
Talagrand, M.: The generic chaining, vol. 154.
\newblock Springer (2005)

\bibitem{toh2010accelerated}
Toh, K.C., Yun, S.: An accelerated proximal gradient algorithm for nuclear norm
  regularized linear least squares problems.
\newblock Pacific Journal of Optimization \textbf{6}(615-640), 15 (2010)

\bibitem{van2009conditions}
Van De~Geer, S.A., B{\"u}hlmann, P., et~al.: On the conditions used to prove
  oracle results for the lasso.
\newblock Electronic Journal of Statistics \textbf{3}, 1360--1392 (2009)

\bibitem{wen2010fast}
Wen, Z., Yin, W., Goldfarb, D., Zhang, Y.: A fast algorithm for sparse
  reconstruction based on shrinkage, subspace optimization, and continuation.
\newblock SIAM Journal on Scientific Computing \textbf{32}(4), 1832--1857
  (2010)

\bibitem{wright2009sparse}
Wright, S.J., Nowak, R.D., Figueiredo, M.A.: Sparse reconstruction by separable
  approximation.
\newblock Signal Processing, IEEE Transactions on \textbf{57}(7), 2479--2493
  (2009)

\bibitem{xiao2013proximal}
Xiao, L., Zhang, T.: A proximal-gradient homotopy method for the sparse
  least-squares problem.
\newblock SIAM Journal on Optimization \textbf{23}(2), 1062--1091 (2013)

\bibitem{zhang2013linear}
Zhang, H., Jiang, J., Luo, Z.Q.: On the linear convergence of a proximal
  gradient method for a class of nonsmooth convex minimization problems.
\newblock Journal of the Operations Research Society of China \textbf{1}(2),
  163--186 (2013)

\end{thebibliography}

\end{document}